\documentclass[11pt]{amsart}
\usepackage[letterpaper, total={6.5in,9in}, includefoot, centering]{geometry}
\usepackage{amsfonts}
\usepackage{amsmath,amsthm,amssymb,amscd}
\usepackage[foot]{amsaddr}

\usepackage[mathscr]{eucal}
\usepackage{latexsym}
\usepackage[new]{old-arrows}
\usepackage{graphics}
\usepackage{verbatim}
\usepackage{upgreek}
\usepackage{overpic}
\usepackage[all,cmtip]{xy} 
\usepackage{enumitem}
\usepackage{tikz}
\usepackage{tikz-cd}
\usepackage[pagebackref]{hyperref}
\renewcommand*\backref[1]{\ifx#1\relax \else (Page #1) \fi}
\usepackage[font=scriptsize]{caption}
\usepackage{upgreek}

\usepackage[authoryear,sort&compress]{natbib}
\bibpunct{[}{]}{;}{n}{,}{,}

\makeatletter \@addtoreset{equation}{section}

\DeclareMathOperator{\ext}{ext}

\makeatother
\newtheorem{theorem}{Theorem}[section]

\newtheorem{corollary}{Corollary}[section]
\newtheorem{definition}{Definition}[section]
\newtheorem{remark}{Remark}[section]
\newtheorem{example}{Example}[section]
\newtheorem{prop}{Proposition}[section]
\newtheorem{assumption}{Assumption}[section]



\setlength{\parskip}{1ex}
\setlength{\parindent}{0ex}

\begin{document}
\title[Variational Structures in Cochain Projection Based Variational Discretizations]{Variational Structures in Cochain Projection Based Variational Discretizations of Lagrangian PDEs}
\author{Brian Tran and Melvin Leok}
\address{Department of Mathematics, University of California, San Diego, 9500 Gilman Drive, La Jolla, CA 92093-0112, USA.}
\email{b3tran@ucsd.edu, mleok@ucsd.edu}
\allowdisplaybreaks

\begin{abstract}
Compatible discretizations, such as finite element exterior calculus, provide a discretization framework that respect the cohomological structure of the de Rham complex, which can be used to systematically construct stable mixed finite element methods. Multisymplectic variational integrators are a class of geometric numerical integrators for Lagrangian and Hamiltonian field theories, and they yield methods that preserve the multisymplectic structure and momentum-conservation properties of the continuous system. In this paper, we investigate the synthesis of these two approaches, by constructing discretization of the variational principle for Lagrangian field theories utilizing structure-preserving finite element projections. In our investigation, compatible discretization by cochain projections plays a pivotal role in the preservation of the variational structure at the discrete level, allowing the discrete variational structure to essentially be the restriction of the continuum variational structure to a finite-dimensional subspace. The preservation of the variational structure at the discrete level will allow us to construct a discrete Cartan form, which encodes the variational structure of the discrete theory, and subsequently, we utilize the discrete Cartan form to naturally state discrete analogues of Noether's theorem and multisymplecticity, which generalize those introduced in the discrete Lagrangian variational framework by \citet{MaPaSh1998}. We will study both covariant spacetime discretization and canonical spatial semi-discretization, and subsequently relate the two in the case of spacetime tensor product finite element spaces.
\end{abstract}

\maketitle

\tableofcontents

\section{Introduction}
The problem of structure-preservation in numerical discretizations of partial differential equations has primarily been studied in two disjoint stages, the first involving the semi-discretization of the spatial degrees of freedom, and the second having to do with the time-integration of the resulting coupled system of ordinary differential equations. Implicit in such an approach is the use of tensor product meshes in spacetime. In the context of spatial semi-discretization, the notion of structure-preservation is focused on compatible discretizations~(see~\citet{Ar2018}, and references therein), that preserve in some manner the functional and geometric relationships between the different function spaces that arise in the partial differential equation, and in the context of time-integration, geometric numerical integrators~(see~\citet{HaLuWa2006}, and references therein) aim to preserve geometric invariants like the symplectic or Poisson structure, energy, momentum, and the nonlinear manifold structure of the configuration spaces, like its Lie group, homogeneous space, or Riemannian structure.

Lagrangian partial differential equations are an important class of partial differential equations that exhibit geometric structure, and they can benefit from numerical discretizations that preserve such geometric structure. This can either be viewed as an infinite-dimensional Lagrangian system with time as the independent variable, or a finite-dimensional Lagrangian multisymplectic field theory~\cite{MaPeShWe2001} with space and time as independent variables. Lagrangian variational integrators~\cite{MaWe2001, MaPaSh1998} are a popular method for systematically constructing symplectic integrators of arbitrarily high-order, and satisfy a discrete Noether's theorem that relates group-invariance with momentum conservation. A group-invariant (and hence momentum-preserving) variational integrator can be constructed from group-equivariant interpolation spaces~\cite{GaLe2018}.

In this paper, we will demonstrate how compatible discretization, multisymplectic variational integrators, and group-equivariant interpolation spaces can be combined to yield a natural geometric structure-preserving discretization framework for Lagrangian field theories. 

\paragraph{\bf Multisymplectic Formulation of Classical Field Theories} The variational principle for Lagrangian PDEs involve a multisymplectic formulation \cite{MaPaSh1998, MaPeShWe2001}. The base space $X$ consists of independent variables, denoted by $(x^0,\ldots,x^n)\equiv(t,x)$, where $x^0\equiv t$ is time, and $(x^1,\ldots,x^n)\equiv x$ are space variables. The dependent field variables, $(y^1,\ldots, y^m)\equiv y$, form a fiber over each spacetime basepoint. The independent and field variables form the configuration bundle, $\pi:Y\rightarrow X$. The configuration of the system is specified by a section of $Y$ over $X$, which is a continuous map $\phi:X\rightarrow Y$, such that $\pi\circ\phi=1_{X}$. This means that for every $(t,x)\in X$, $\phi((t,x))$ is in the fiber $\pi^{-1}((t,x))$ over $(t,x)$.

\begin{figure}[h]
\centerline{
\begin{overpic}
[scale=0.925]
{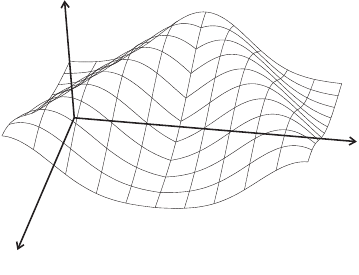}
\put(10,5){$X$}
\put(95,30){$t$}
\put(10,70){$x$}
\put(85,60){$\phi$}
\end{overpic}}
\vspace*{-0.15in}
  \caption{\footnotesize A section of the configuration bundle: the horizontal axes represent spacetime, and the vertical axis represent dependent field variables. The section $\phi$ gives the value of the field variables at every point of spacetime.}
  \label{fig:section_configuration_bundle}
\end{figure}

For ODEs, the Lagrangian depends on position and its time derivative, which is an element of the tangent bundle $TQ$, and the action is obtained by integrating the Lagrangian in time. In the multisymplectic case, the Lagrangian density is dependent on the field variables and the partial derivatives of the field variables with respect to the spacetime variables, and the action integral is obtained by integrating the Lagrangian density over a region of spacetime. The multisymplectic analogue of the tangent bundle is the first jet bundle $J^1 Y$, consisting of the configuration bundle $Y$, and the first partial derivatives of the field variables with respect to the independent variables. In coordinates, we have $\phi(x^0,\ldots, x^n)=(x^0,\ldots x^n, y^1,\ldots y^m)$, which allows us to denote the partial derivatives by
$ v^a_\mu={y^a}_{,\mu}=  \partial y^a / \partial x^\mu$.
We can think of $J^1 Y$ as a fiber bundle over $X$. Given a section $\phi:
X\rightarrow Y$, we obtain its first jet extension, $j^1 \phi:X\rightarrow J^1 Y$, that is given by
\[ j^1 \phi (x^0,\ldots, x^n) = \left( x^0,\ldots, x^n, y^1,\ldots, y^m, {y^1}_{,0},\ldots, {y^m}_{,n} \right),\]
which is a section of the fiber bundle $J^1 Y$ over $X$. We refer to sections of $J^1Y$ of the form $j^1\phi$, where $\phi$ is a section of $Y$, as holonomic. The configuration space is the space of sections of $Y$ and the velocity phase space is the space of holonomic sections of $J^1Y$. The Lagrangian density is a bundle map $\mathcal{L}:J^1 Y\rightarrow \wedge^{n+1}(T^*X)$ and hence, induces a map on the space of sections $\mathcal{L}: \Gamma(J^1Y) \rightarrow \Omega^{n+1}(X)$. Thus, we can define the action functional $S: \Gamma(Y) \rightarrow \mathbb{R}$ by
$S[\phi] = \int_{X} \mathcal{L}(j^1 \phi). $
Hamilton's principle states that $\delta S = 0$, subject to compactly supported variations. As we will see, this is the basis of Lagrangian multisymplectic variational integrators~\cite{MaPaSh1998}.

The variational structure of a Lagrangian field theory is given by the Cartan form, which in coordinates has the expression
\begin{equation}\label{Cartan form}
\Theta_{\mathcal{L}} = \frac{\partial L}{\partial v^a_\mu} dy^a \wedge d^nx_\mu + \left(L - \frac{\partial L}{\partial v^a_\mu}v^a_\mu\right) d^{n+1}x.
\end{equation}
This can be defined intrinsically as the pullback of the canonical $(n+1)$-form on the dual jet bundle by the covariant Legendre transform $\mathbb{F}\mathcal{L}: J^1Y \rightarrow J^1Y^*$. Then, the action can be expressed as $S[\phi] = \int_X\mathcal{L}(j^1\phi) = \int_X (j^1\phi)^*\Theta_{\mathcal{L}}.$ The variation of the action is then expressed as
$$ dS[\phi]\cdot V = -\int_X (j^1\phi)^*(j^1V \lrcorner\ \Omega_{\mathcal{L}}) + \int_{\partial X} (j^1\phi)^* (j^1V \lrcorner\ \Theta_{\mathcal{L}}), $$
where $\Omega_{\mathcal{L}} = -d\Theta_{\mathcal{L}}$ defines the multisymplectic form and $j^1V$ denotes the jet prolongation of the vector field $V$ (for details, see \citet{GoIsMaMo1998}). Hence, the variation of the action is completely specified by the Cartan form; we will show that a finite element discretization of the variational principle gives rise to a discrete form and subsequently we will express variational properties of the discrete system in terms of the discrete Cartan form.

In this paper, we will take the fields to be elements of $H\Lambda^k(X)$, the space of square integrable $k-$forms on $X$ with square integrable exterior derivative. In this setting, the appropriate analogue of the configuration space is $H\Lambda^k$ and the appropriate analogue of the velocity phase space is $J^1_{H\Lambda^k} := H\Lambda^k \times dH\Lambda^k$, where the jet extension of a field $\phi \in H\Lambda^k$, only depending on the exterior derivative, is $j^1_d\phi \equiv (x,\phi,d\phi)$, i.e., we consider Lagrangian theories that depend on the exterior derivative of the field and not depending more generally on all first-order derivatives; for scalar fields, $k=0$, these are equivalent.  We refer to $j^1_d: H\Lambda^k \rightarrow J^1_{H\Lambda^k}$ as the exterior jet extension.

\paragraph{\bf Finite Element Exterior Calculus} The notion of compatible discretization is a research area that has garnered significant interest and activity in the finite element community, motivated by the seminal work of \citet{ArFaWi2006} on finite element exterior calculus that provides a broad generalization of Hiptmair's work on mixed finite elements for electromagnetism \citep{Hi2002}. This arises from the fundamental role that the de~Rham complex of exterior differential forms plays in mixed formulations of elliptic partial differential equations, and the realization that many of the most successful mixed finite element spaces, such as Raviart--Thomas and N\'ed\'elec elements, can be viewed as finite element subspaces of the de~Rham complex that satisfy a bounded cochain projection property, so that the set of mixed finite elements form a subcomplex that provides stable approximations of the original problem.

\paragraph{\bf Group-equivariant interpolation}

The study of group-equivariant approximation spaces~\cite{GaLe2018} for functions taking values on manifolds is motivated by the applications to geometric structure-preserving discretization of Lagrangian and Hamiltonian PDEs with symmetries. In particular, when the Lagrangian density for a Lagrangian PDE with symmetry is discretized using a Lagrangian multisymplectic variational integrator constructed from an approximation space that is equivariant with respect to the symmetry group, the resulting numerical method automatically preserves the momentum map associated with the symmetry of the PDE. In essence, such variational discretizations exhibit a discrete analogue of Noether's theorem, which connects symmetries of the Lagrangian with momentum conservation laws.

Many intrinsic geometric flows such as the Ricci flow and the Einstein equations involves computing the evolution of a Riemannian or pseudo-Riemannian metric on spacetime. Additionally, these intrinsic geometric flows can often be formulated variationally, so it is natural to consider group-equivariant approximation spaces taking values on Riemannian or pseudo-Riemannian metrics with a view towards constructing variational discretizations that preserve the associated momentum maps.

A now standard approach to constructing an approximation space for functions taking values on a Riemannian manifold that is equivariant with respect to Riemannian isometries is the method of geodesic finite elements introduced independently by~\citet{Sa2012} and~\citet{Gr2013}. Given a Riemannian manifold $(M,g)$, the geodesic finite element $\varphi:\Delta^n\rightarrow M$ associated with a set of linear space finite elements $\{ v_i:\Delta^n\rightarrow\mathbb{R} \}_{i=0}^n$ is given by the Fr\'echet (or Karcher) mean,
\[ \varphi(x)=\arg \min_{p\in M} \sum\nolimits_{i=0}^n v_i(x) (\operatorname{dist} (p,m_i)) ^2,\]
where the optimization problem involved can be solved using optimization algorithms developed for matrix manifolds~(see~\citet{AbMaSe2008}, and references therein). The spatial derivatives of the geodesic finite element can be computed in terms of an associated optimization problem. The advantage of the geodesic finite element approach is that it inherits the approximation properties of the underlying linear space finite element, but it can be expensive to compute, since it entails solving an optimization problem on a manifold.

An alternative approach to group-equivariant interpolation for functions taking values on symmetric spaces was introduced in \citet{GaLe2018}, which, in particular, is applicable to the interpolation of Riemannian and pseudo-Riemannian metrics. It uses the generalized polar decomposition~\cite{munthe2001generalized} to construct a local diffeomorphism between a symmetric space and a Lie triple system, and thereby lift a scalar-valued interpolant to a symmetric space-valued interpolant.

\paragraph{\bf Lagrangian Variational Integrators}

Variational integrators~(see \cite{MaWe2001}, and references therein) are a class of geometric structure-preserving numerical integrators that are based on a discretization of Hamilton's principle. They are particularly appropriate for the simulation of Lagrangian and Hamiltonian ODEs and PDEs, as they automatically preserve many geometric invariants, including the symplectic structure, momentum maps associated with symmetries of the system, and exhibit bounded energy errors for exponentially long times.

In the case of Lagrangian ODEs, variational integrators are based on constructing computable approximations $L_d:Q\times Q\rightarrow \mathbb{R}$ of the exact discrete Lagrangian,
\[ L_d^E(q_0,q_1,h)=\ext_{\substack{q\in C^2([0,h],Q) \\ q(0)=q_0, q(h)=q_1}} \int_0^h L(q(t), \dot q(t)) dt,\]
which can be viewed as Jacobi's solution of the Hamilton--Jacobi equation. Given a discrete Lagrangian $L_d$, one introduces the discrete action sum $\mathbb{S}_d=\sum_{k=0}^{n-1} L_d(q_k, q_{k+1})$, and then the discrete Hamilton's principle states that $\delta \mathbb{S}_d=0$, for fixed boundary conditions $q_0$ and $q_n$. This leads to the discrete Euler--Lagrange equations, 
\[D_2 L_d(q_{k-1},q_k)+D_1 L_d(q_k,q_{k+1})=0,
\]
where $D_i$ denotes the partial derivative with respect to the $i$-th argument. This implicitly defines the discrete Lagrangian map $F_{L_d}:(q_{k-1},q_k)\mapsto(q_k,q_{k+1})$ for initial conditions $(q_{k-1},q_k)$ that are sufficiently close to the diagonal of $Q\times Q$.
It is also equivalent to the implicit discrete Euler--Lagrange equations,
\[
p_k=-D_1 L_d(q_k, q_{k+1}),\qquad p_{k+1}=D_2 L_d(q_k, q_{k+1}),\]
which implicitly defines the discrete Hamiltonian map $\tilde{F}_{L_d}:(q_k,p_k)\mapsto(q_{k+1},p_{k+1})$, which is automatically symplectic. This clearly follows from the fact that these equations are precisely the characterization of a symplectic map in terms of a Type I generating function. The two equations in the implicit discrete Euler--Lagrange equations can be used to define the discrete Legendre transforms, \(\mathbb{F}^{\pm}L_{d}: Q\times Q \rightarrow T^{*}Q\):
\begin{align*}
\mathbb{F}^{+}L_{d}&:(q_{0},q_{1}) \rightarrow (q_{1},p_{1}) = (q_{1},D_{2}L_{d}(q_{0},q_{1})), \\
\mathbb{F}^{-}L_{d}&:(q_{0},q_{1}) \rightarrow (q_{0},p_{0}) = (q_{0},-D_{1}L_{d}(q_{0},q_{1})).
\end{align*}
The following commutative diagram illustrates the relationship between the discrete Hamiltonian flow map, discrete Lagrangian flow map, and the discrete Legendre transforms,%
\begin{align*}
\xymatrix{
& (q_{k},p_{k})  \ar[rr]^{\tilde{F}_{L_{d}}} & & (q_{k+1},p_{k+1}) & \\
& & & & \\
(q_{k-1},q_{k}) \ar[uur]^{\mathbb{F}^{+}L_{d}} \ar[rr]_{F_{L_{d}}} & & (q_{k},q_{k+1}) \ar[rr]_{F_{L_{d}}} \ar[uur]^{\mathbb{F}^{+}L_{d}} \ar[uul]_{\mathbb{F}^{-}L_{d}}& &(q_{k+1},q_{k+2}) \ar[uul]_{\mathbb{F}^{-}L_{d}}
}
\end{align*}
If the discrete Lagrangian is invariant under the diagonal action of a Lie group $G$, i.e., $L_d(q_0, q_1)=L_d(gq_0, gq_1)$, for all $g\in G$, then the discrete Noether's theorem states that there is a discrete momentum map that is automatically preserved by the variational integrator. The bounded energy error of variational integrators can be understood by performing backward error analysis~\cite{Ha1994,BeGi1994}, which then shows that the discrete flow map is approximated with exponential accuracy by the exact flow map of the Hamiltonian vector field of a modified Hamiltonian. Similarly, backward error analysis for Lagrangian variational integrators is considered in \cite{Ve2017}.

\paragraph{\bf Multisymplectic Hamiltonian Variational Integrators.}
For Hamiltonian PDEs (see, for example, \citet{MaSh1999}) the action is a functional on the field and multimomenta values (more precisely, sections of the restricted dual jet bundle),
$$ S[\phi,p] = \int [p^\mu \partial_\mu \phi - H(\phi,p) ]d^{n+1}x, $$
where the integration is over some $(n+1)$-dimensional region of spacetime. The variational principle gives the De Donder--Weyl equations $\partial_\mu p^\mu = -\partial H/\partial \phi$, $\partial_\mu\phi = \partial H/\partial p^\mu$. Defining $z = (\phi, p^0, \dots, p^n)$ and $K^{\mu}$ as the $(n+2)\times (n+2)$ skew-symmetric matrix with value $-1$ in the $(0,\mu+1)$ entry, $1$ in the $(\mu+1,0)$ entry, and $0$ in every other entry (with indexing from $0$ to $n+1$), the De Donder--Weyl equations can be written in the form
$$ K^0\partial_0 z + \dots + K^n\partial_n z = \nabla_z H. $$
This formulation of Hamiltonian PDEs was studied in \citet{Br1997}; in particular, it was shown that such a system admits a multisymplectic conservation law of the form $\partial_\mu \omega^\mu(V,W) = 0$, where the $\omega^\mu$ are two-forms corresponding to $K^\mu$ and the conservation law holds when evaluated on first variations $V, W$. For discretizing such equations, multisymplectic integrators have been developed which admit a discrete analogue of this multisymplectic conservation law (see, for example, \citet{BrRe2006}). Such multisymplectic integrators have traditionally not been approached from a variational perspective. 

However, in \citet{TrLe2022}, we developed a systematic method for constructing variational integrators for multisymplectic Hamiltonian PDEs which automatically admit a discrete multisymplectic conservation law and a discrete Noether's theorem by virtue of the discrete variational principle. The construction is based on a discrete approximation of the boundary Hamiltonian that was introduced in~\citet{VaLiLe2011}, 
$$ H_{\partial U}(\varphi_A,\pi_B) = \ext\Big[ \int_B p^\mu \phi d^nx_\mu - \int_U (p^\mu \partial_\mu\phi - H(\phi,p) ) d^{n+1}x \Big],$$
where $\partial U = A \sqcup B$, boundary conditions are placed on the field value $\phi$ on $A$ and normal momenta value on $B$, and one extremizes over the sections $(\phi,p)$ over $U$ satisfying the specified boundary conditions. The boundary Hamiltonian is a generating functional in the sense that the Type II variational principle generates the normal momenta value along $A$ and the field value along $B$,
$$ \frac{\delta H_{\partial U}}{\delta \varphi_A} = -p^n|_A, \quad \frac{\delta H_{\partial U}}{\delta \pi_B} = \phi|_B. $$
A variational integrator is then constructed by first approximating the boundary Hamiltonian using a finite-dimensional function space and quadrature, and subsequently enforcing the Type II variational principle. For example, with particular choices of function spaces and quadrature, \citet{TrLe2022} recover the class of multisymplectic partitioned Runge--Kutta methods.

In this paper, we take a different approach in several regards. First, we focus on Lagrangian field theories as opposed to Hamiltonian field theories. For Hamiltonian field theories, the momenta are related to the field and its derivative by the Legendre transform; this falls out from the variational principle so one does not need to enforce it beforehand. Thus, in this sense, the momenta and field values can be considered as independent before enforcing the variational principle. On the other hand, for Lagrangian field theories, the Lagrangian depends on both the field value and its first derivative, so one cannot na\"\i vely treat the two as independent; that is, the Lagrangian depends on holonomic sections of the jet bundle. As we will see, this will mean that we need to pay particular attention to the holonomic condition when discretizing via a finite element projection. Furthermore, as opposed to constructing variational integrators from a generating functional (the analogue in the Lagrangian framework would be the boundary Lagrangian, see \citet{VaLiLe2011}), in this paper, we instead investigate directly discretizing the variational principle $\delta S = 0$ utilizing projections into finite-dimensional subspaces. Finally, for simplicity, we do not utilize any quadrature approximations of the various integrals which we encounter; for strong nonlinearities in the Lagrangian, one generally has to utilize quadrature to construct an efficient discretization. However, the theory that we outline is also applicable to the case of quadrature approximation by first applying the quadrature approximation of the action before enforcing the variational principle, so that the resulting discretization is still variational; we will elaborate on this in Remark \ref{Remark on Quadrature}. For this reason, we will assume exact integration in order to keep the exposition simple. 

\paragraph{\bf Main Contributions.} This paper studies the variational finite element discretization of Lagrangian field theories from two perspectives; we begin by investigating directly discretizing the full variational principle over the full spacetime domain, which we refer to as the ``covariant" approach, and subsequently study semi-discretization of the instantaneous variational principle on a globally hyperbolic spacetime, which we refer to as the ``canonical" approach. This paper can be considered a discrete analogue to the program initiated in \citet{GoIsMaMo1998, GoIsMaMo2004}, which lays the foundation for relating the covariant and canonical formulations of Lagrangian field theories through their (multi)symplectic structures and momentum maps. One of the goals of understanding the relation between these two different formulations is to systematically relate the covariant gauge symmetries of a gauge field theory to its initial value constraints. This is seen, for example, in general relativity, where the diffeomorphism gauge invariance gives rise to the Einstein constraint equations over the initial data hypersurface (see, for example, \citet{Go2012}). When one semi-discretizes such gauge field theories, the discrete initial data must satisfy an associated discrete constraint. We aim to make sense of the discrete geometric structures in the covariant and canonical discretization approaches as a foundation for understanding the discretization of gauge field theories.

In Section \ref{Covariant Discretization Section}, we begin by formulating a discrete variational principle in the covariant approach, utilizing the finite element construction to appropriately project the variational principle. We show that a cochain projection from the underlying de Rham complex onto the finite element spaces yields a natural discrete variational principle that is compatible with the holonomic jet structure of a Lagrangian field theory. In Section \ref{Variational Structure Section}, we then show that discretizing by cochain projections leads to a naturality relation between the continuous variational problem and the discrete variational problem; this naturality then implies that discretization and the variational principle commute and also, that discretizing at the level of the configuration bundle or at the level of the jet bundle are equivalent. Subsequently, by decomposing the finite element spaces into boundary and interior components, we define a discrete weak Cartan form in analogy with the continuum weak Cartan form which will, in a sense, encode the discrete variational structure. With particular choices of finite element spaces, this discrete weak Cartan form recovers the notion of the discrete Cartan form introduced by \citet{MaPaSh1998}. However, we note that our notion of a discrete weak Cartan form is more general and furthermore, since our discrete variational problem is naturally related to the continuum variational problem, we are able to explicitly discuss in what sense the discrete weak Cartan form converges to the continuum weak Cartan form.  Using this discrete weak Cartan form, in Sections \ref{Multisymplectic Section} and \ref{Noether's Theorem Section}, we state and prove discrete analogues of the multisymplectic form formula and Noether's theorem. In Section \ref{Variational Complex Section}, we reinterpret and concisely summarize the preceding sections by interpreting the discrete variational structures as elements of a discrete variational complex. In Section \ref{Numerical Example Section}, we provide an example of a multisymplectic integrator for the scalar Poisson equation and prove the convergence of the discrete weak Cartan form to the weak Cartan form.

In Section \ref{Semi-Discretization Section}, we study the semi-discretization of the canonical formulation of a Lagrangian field theory on a globally hyperbolic spacetime. In Section \ref{Semi-Discrete EL Section}, we discretize the instantaneous variational principle utilizing cochain projections onto finite element spaces over a Cauchy surface, which gives rise to a semi-discrete Euler--Lagrange equation. In Section \ref{Semi-discrete Symplectic Structure Section}, we relate this semi-discrete Euler--Lagrange equation to a Hamiltonian flow on a symplectic semi-discrete phase space. We will discuss in what sense the symplectic structure on the semi-discrete phase space arises from a symplectic structure on the continuum phase space. Subsequently, we will investigate the energy-momentum map structure associated to the semi-discrete phase space in Section \ref{Energy-Momentum Section}, and discuss how, under appropriate equivariance conditions on the projection, the energy-momentum map structure on the semi-discrete phase space arises as the pullback of the energy-momentum map structure on the continuum phase space. This lays a foundation for understanding initial value constraints when discretizing field theories with gauge symmetries. Finally, in Section \ref{Tensor Product Discretization Section}, we relate the covariant and canonical discretization approaches in the case of tensor product finite element spaces. 

The underlying theme of this paper is that, when one discretizes the variational principle utilizing compatible discretization techniques, the associated (covariant or canonical) discretization inherits discrete variational structures which can be viewed as pullbacks or projections of the associated continuum variational structures. These discrete variational structures allow one to investigate structure-preservation under discretization of important physical properties, such as momentum conservation, symplecticity, and (gauge) symmetries. 

\section{Covariant Discretization of Lagrangian Field Theories}\label{Covariant Discretization Section}
In this section, we discretize the covariant Euler--Lagrange equations which arise from the variational principle $\delta S[\phi] = 0$ for the action $S: \phi \mapsto \int_X \mathcal{L}(j^1_d\phi)$ where $\phi \in H\Lambda^k$ is an element of the configuration space and $j^1_d\phi = (x,\phi,d\phi)$. To utilize the finite element method, we take our base space $X$ to be a bounded $(n+1)-$dimensional polyhedral domain with boundary $\partial X$, equipped with a finite element triangulation $\mathcal{T}_h$. We will assume $X$ has a Riemannian or Lorentzian metric. For this discretization, we perform the variation over a finite element space, and subsequently study how the multisymplectic and covariant momentum map structures are affected by discretization. In particular, we show how these structures are preserved for particular choices of finite element spaces, namely spaces whose projections are cochain maps or group-equivariant interpolation spaces. To begin, we first discuss the weak formulation of Lagrangian field theory.

\subsection{Weak Lagrangian Field Theory}\label{Weak Lagrangian Field Theory Section}
In this section, we formulate a weak version of Lagrangian field theory on the Hilbert space $H\Lambda^k$. Since we wish to work in the Sobolev space setting, it does not make sense to consider pointwise values of (e.g., square integrable) sections. However, we will assume that the Lagrangian density makes sense as a map on sections, $\mathcal{L}: J^1_{H\Lambda^k} \rightarrow \Omega^{n+1}(X)$, i.e., given a section $\phi \in H\Lambda^k$, the quantity $\mathcal{L}(j^1_d\phi)$ is a top-dimensional form on $X$. Hence, we can define the action $S: H\Lambda^k \rightarrow \mathbb{R}$ via $S[\phi] = \int_X \mathcal{L}(j^1_d\phi)$. Thus, from our perspective, a weak Lagrangian field theory is defined by a Lagrangian density $\mathcal{L}: J^1_{H\Lambda^k} \rightarrow \Omega^{n+1}(X)$ with associated action $S: H\Lambda^k \rightarrow \mathbb{R}$.

We derive the weak Euler--Lagrange equations in the Hilbert space setting, where the velocity phase space is $J^1_{H\Lambda^k} = H\Lambda^k \times dH\Lambda^k.$ Fixing the trace of $\phi$ on $\partial X$, the variational principle is to find $\phi \in H\Lambda^k$ such that $\delta S[\phi]\cdot v = 0$ for all $v \in \mathring{H}\Lambda^k \equiv \{ v \in H\Lambda^k : \text{Tr}(v) = 0\}$. This yields the weak Euler--Lagrange equations
\begin{align}\label{Weak EL Equations}
0 &= \delta S[\phi]\cdot v = \int_X \Big(\delta_2\mathcal{L}(j^1_d\phi)\cdot v + \delta_3\mathcal{L}(j^1_d\phi)\cdot dv\Big)\\
&= (\partial_2\mathcal{L}(j^1_d\phi),v)_{L^2\Lambda^k} + (\partial_3\mathcal{L}(j^1_d\phi),dv)_{L^2\Lambda^{k+1}} \nonumber \\
&= (\partial_2\mathcal{L}(j^1_d\phi),v)_{L^2\Lambda^k} + (d^*\partial_3\mathcal{L}(j^1_d\phi),v)_{L^2\Lambda^k},\nonumber
\end{align}
where $\delta_i$ denotes the variation with respect to the $i^{th}$ argument, the codifferential $d^*$ is interpreted in the weak sense, and in the second line we apply the Riesz representation theorem to express the linear functional $v \in L^2\Lambda^k \mapsto \int_X \delta_2\mathcal{L}(j^1_d\phi )\cdot v$ as an element $\partial_2\mathcal{L}(j^1_d\phi)$ of $L^2\Lambda^k$ and similarly, the linear functional $w \in L^2\Lambda^{k+1} \mapsto \int_X \delta_3\mathcal{L}(j^1_d\phi) \cdot w$ as an element $\partial_3\mathcal{L}(j^1_d\phi)$ of $L^2\Lambda^{k+1}$, assuming that these linear functionals are bounded.

\begin{remark}\label{Lagrangian Regularity Remark}
As mentioned above, the linear functionals $v \in L^2\Lambda^k \mapsto \int_X \delta_2\mathcal{L}(j^1_d\phi )\cdot v$ and $w \in L^2\Lambda^{k+1} \mapsto \int_X \delta_3\mathcal{L}(j^1_d\phi) \cdot w$ should be bounded in order to represent them as elements of $L^2\Lambda^k$ and  $L^2\Lambda^{k+1}$, respectively. We give some examples of classes of Lagrangian densities for which this holds.

Consider a Lagrangian density containing at most quadratic terms in $\phi$ and $d\phi$, of the form
$$ \mathcal{L}(j^1_d\phi) = \frac{1}{2} a_1 d\phi \wedge * d\phi + \frac{1}{2} a_2 \phi \wedge * \phi + a_3 f \wedge * d\phi + a_4 g \wedge * \phi, $$
where $a_i \in L^\infty, f \in L^2\Lambda^{k+1}, g \in L^2\Lambda^k$ are given. The variation of the associated action can be computed
\begin{align*}
\delta S[\phi] \cdot v = \int_X \Big((a_2\phi + a_4 g)\wedge * v + (a_1 d\phi + a_3 f) \wedge * dv\Big).
\end{align*}
We see that the functional $v \in L^2\Lambda^k \mapsto \int_X \delta_2\mathcal{L}(j^1_d\phi )\cdot v = \int_X (a_2\phi + a_4 g)\wedge * v$ is bounded, since
$$ \Big|\int_X (a_2\phi + a_4 g)\wedge * v\Big| = (a_2\phi, v)_{L^2\Lambda^k} + (a_4 g, v)_{L^2\Lambda^{k}} \leq (\|a_2\|_{L^\infty}\|\phi\|_{L^2\Lambda^k} + \|a_4\|_{L^\infty}\|g\|_{L^2\Lambda^k}) \|v\|_{L^2\Lambda^k}. $$
Thus, we can represent this functional as an element of $L^2\Lambda^k$; explicitly, $\partial_2\mathcal{L}(j^1_d\phi) = a_2\phi + a_4g$. Similarly, $w \in L^2\Lambda^{k+1} \mapsto \int_X \delta_3\mathcal{L}(j^1_d\phi) \cdot w = \int_X (a_1 d\phi + a_3 f) \wedge * w$ is bounded and $\partial_3\mathcal{L}(j^1_d\phi) = a_1 d\phi + a_3 f$.

One can also consider nonlinearities, given sufficient control on the nonlinearity. For example, with $k=0$ (for simplicity; one could consider $k \geq 1$ with the nonlinearities acting on the components of $\phi$), consider a Lagrangian density which contains a term of the form $V(\phi)\, d^{n+1}x$, where $V \in C^1(\mathbb{R}, \mathbb{R})$ has bounded derivative $V' \in L^\infty(\mathbb{R},\mathbb{R})$. The variation of this term in the associated action gives the linear functional
$$ v \in L^2 \mapsto \int_X V'(\phi) v\, d^{n+1}x. $$
Since the domain $X$ is bounded, we have the continuous embedding $L^2 \hookrightarrow L^1$ with $\| v \|_{L^1} \leq C \| v\|_{L^2}$. Hence, the above linear functional is bounded, since
$$ \Big| \int_X V'(\phi) v\, d^{n+1}x \Big| \leq \|V'\|_{L^\infty} \|v\|_{L^1} \leq C\|V'\|_{L^\infty} \|v\|_{L^2}. $$
An example of such a nonlinearity occurs in the sine--Gordon Lagrangian density, which contains a term of the form $V(\phi)\, d^{n+1}x = \cos(\phi)\, d^{n+1}x.$
\end{remark}

We now define a weak analogue of the Cartan form \eqref{Cartan form}, relative to a region $U \subset X$. If we only assume $H\Lambda$ regularity on the fields and variations, we define the weak Cartan form, at a solution of the weak Euler--Lagrange equations, to be the variation of the action
$$ \Theta_U(\phi)\cdot v \equiv dS[\phi]\cdot v; $$
note that this is in general nonzero since we are not assuming that $v$ has vanishing trace on the boundary. In some sense, the weak Cartan form encodes the contribution of the boundary term to the variation of the action. To see this explicitly, we need to assume higher regularity.

To make sense of such a boundary term, we require higher regularity, at least locally on $U$; namely, since the trace acts as a bounded operator $\text{Tr}: H\Lambda^m(U) \rightarrow H^{-1/2}\Lambda^m(\partial U)$ (\citep{ArFaWi2006}) and as a bounded operator $\text{Tr}: H^1\Lambda^m(U) \rightarrow H^{1/2}\Lambda^m(\partial U)$, the solution $\phi$ and the Lagrangian have to have enough regularity so that $\partial_3\mathcal{L}(j^1_d\phi)$ is in $H^1\Lambda^{k+1}(U)$. For example, in the first class of Lagrangians discussed in Remark \ref{Lagrangian Regularity Remark}, if the solution $\phi$ has $H^2$ regularity and the given $f$ has $H^1$ regularity, then this is satisfied. Assuming this higher regularity, the weak Cartan form is defined to be the boundary term which arises for a variation $v$ with generally nonzero boundary trace. That is,
\begin{equation}\label{Weak Cartan Form}
\Theta_U(\phi)\cdot v \equiv \int_{\partial U} v \wedge * \partial_3\mathcal{L}(j^1_d\phi).
\end{equation}

We refer to this as the weak Cartan form since it involves integration, whereas (in the smooth setting) the Cartan form is the integrand of the above expression. With this definition, the variation of the action with respect to $v \in H\Lambda^k$ can be expressed
$$ \delta S[\phi] \cdot v = \text{EL}(\phi) \cdot v + \Theta(\phi) \cdot v, $$
where $\text{EL}(\phi) \cdot v \equiv (\partial_2\mathcal{L}(j^1_d\phi),v)_{L^2\Lambda^k} + (d^*\partial_3\mathcal{L}(j^1_d\phi),v)_{L^2\Lambda^k}$ is the weak Euler--Lagrange form which, by definition, vanishes for a solution $\phi$ of the weak Euler--Lagrange equations.

It will also be useful to think of variations as vector fields over the configuration space. With the identification $T(H\Lambda^k) \cong H\Lambda^k \times H\Lambda^k$, we can view a vector field $V \in \mathfrak{X}(H\Lambda^k)$ as a map $V: H\Lambda^k \rightarrow H\Lambda^k$. Thus, we define the weak Cartan form and weak Euler--Lagrange form, acting on vector fields, as
\begin{align*}
\mathbf{\Theta}(\phi) \cdot V \equiv \Theta(\phi) \cdot V(\phi), \\
\textbf{EL}(\phi) \cdot V \equiv \text{EL}(\phi) \cdot V(\phi).
\end{align*}
The variation of the action with respect to $V$ can then be expressed $dS[\phi]\cdot V = \textbf{EL}(\phi) \cdot V + \mathbf{\Theta}(\phi) \cdot V$. With the above notation, we now derive weak analogues of the multisymplectic form formula and Noether's theorem.

\textbf{Weak Multisymplectic Form Formula.}
Let $V,W$ be first variations of a solution $\phi$ of the weak Euler--Lagrange equations, i.e., their respective flows on $\phi$ still satisfy the weak Euler--Lagrange equation. Then, one has the weak multisymplectic form formula
\begin{equation}\label{Weak Multisymplectic Form Formula}
d\Theta(\phi) \cdot (V,W) = 0.
\end{equation}
The proof follows from $d^2S(\phi)\cdot (V,W) = 0$. We will perform the proof in the discrete setting in Theorem \ref{Discrete Multisymplectic Form Theorem}, where the computation is analogous.

\textbf{Weak Noether's Theorem.} Suppose there is a Lie group action of a Lie group $G$ on $H\Lambda^k$, which we denote by $g \cdot \phi$ for $g \in G, \phi \in H\Lambda^k$. For a Lie algebra element $\xi \in \text{Lie}(G)$, we denote by $\widetilde{\xi}$ its associated infinitesimal generator, which is a vector field on $H\Lambda^k$ defined by 
$$ \widetilde{\xi}(\phi) = \lim_{t\rightarrow 0} \frac{e^{t\xi} \cdot \phi - \phi}{t}. $$
Furthermore, suppose that the action $S_U: H\Lambda^k \rightarrow \mathbb{R}$ is $G$-invariant for any region $U \subset X$, i.e., $S_U[g\cdot \phi] = S[\phi]$ for all $g \in G, \phi \in H\Lambda^k$. Thus, $S_U[e^{t\xi} \cdot \phi] = S_U[\phi]$ for all $\xi \in \text{Lie}(G)$. By differentiating, this gives the expression
$$ dS_U[\phi]\cdot \widetilde{\xi} = 0 \text{ for all } \xi \in \text{Lie}(G). $$
Explicitly, one has
\begin{align*}
0 &= dS_U[\phi]\cdot \widetilde{\xi} = (\partial_2\mathcal{L}(j^1_d\phi), \widetilde{\xi}(\phi))_{L^2\Lambda^k(U)} + (\partial_3\mathcal{L}(j^1_d\phi), d\widetilde{\xi}(\phi))_{L^2\Lambda^{k+1}(U)} \\
&= (\partial_2\mathcal{L}(j^1_d\phi), \widetilde{\xi}(\phi))_{L^2\Lambda^k} + (d^*\partial_3\mathcal{L}(j^1_d\phi), \widetilde{\xi}(\phi))_{L^2\Lambda^{k+1}(U)} + \int_{\partial U} \widetilde{\xi}(\phi) \wedge \star \partial_3\mathcal{L}(j^1_d\phi).
\end{align*}
The first two terms in the second line above vanish by the weak Euler--Lagrange equation, so that
$$ 0 = \int_{\partial U} \widetilde{\xi}(\phi) \wedge \star \partial_3\mathcal{L}(j^1_d\phi) = \Theta_U(\phi)\cdot \widetilde{\xi}.  $$
Thus, Noether's theorem in the weak setting states that the integrated Cartan form paired with an infinitesimal generator of a $G$ action vanishes, $\Theta_U(\phi)\cdot \widetilde{\xi} = 0$, if the action is $G$-invariant. In the smooth setting, by applying Stoke's theorem and noting that $U$ is arbitrary, one has the stronger statement that the exterior derivative of the integrand above vanishes (\citet{MaPaSh1998}).

\subsection{Variational Discretization}\label{Variational Structure Section}

To formulate a discrete variational principle, let $\{\Lambda^m_h\}_{m=0}^{n+1}$ be a subcomplex of finite element spaces approximating $\{H\Lambda\}$ with projections $\pi^m_h: H\Lambda^m \rightarrow \Lambda^m_h$. This provides an approximation of $J^1_{H\Lambda^k} = H\Lambda^k \times dH\Lambda^k$ by $\pi^k_hH\Lambda^k \times \pi^{k+1}_h(dH\Lambda^k)$. Consider the degenerate Lagrangian density $\mathcal{L}_h: J^1_{H\Lambda^k} \rightarrow \Omega^{n+1}(X)$, $\mathcal{L}_h(j^1_d\phi) \equiv \mathcal{L}(x,\pi^k_h\phi, \pi^{k+1}_hd\phi)$ and the associated degenerate action $S_h: H\Lambda^k \rightarrow \mathbb{R}$ defined by
\begin{equation}\label{Degenerate Action, Derivative First}
S_h[\phi] = \int_X \mathcal{L}(x,\pi^k_h\phi,\pi^{k+1}_hd\phi).
\end{equation}
We refer to these as degenerate since the projections have nontrivial kernels, as projections from infinite-dimensional spaces to finite-dimensional subspaces.

The variational principle associated to the degenerate action $S_h$ is to find $\phi \in H\Lambda^k$ such that
\begin{equation}\label{Degenerate Action DEL, Derivative First}
0 = \delta S_h[\phi] \cdot v = (\partial_2\mathcal{L}_h(j^1_d\phi), \pi^k_h v)_{L^2\Lambda^k} + (\partial_3\mathcal{L}_h(j^1_d\phi), \pi^{k+1}_h dv)_{L^2\Lambda^{k+1}}, \text{ for all } v \in \mathring{H}\Lambda^k.
\end{equation}
The issue with \eqref{Degenerate Action DEL, Derivative First} is that the in the second term on the right hand side, the projection $\pi^{k+1}_h dv$ occurs after taking the exterior derivative, so one cannot in general integrate by parts to obtain a boundary term, which is necessary in the continuous theory to define the Cartan form (which, recall, is defined to be the boundary term induced by a variation which does not vanish on the boundary). 

On the other hand, one can produce the desired boundary term if one instead utilizes a different degenerate action defined by $\widetilde{S}_h \equiv S \circ \pi^k_h$,
\begin{equation}\label{Degenerate Action, Project First}
\widetilde{S}_h [\phi] = \int_X \mathcal{L}(x,\pi^k_h\phi, d\pi^k_h\phi),
\end{equation}
since the associated variational principle is to find $\phi \in H\Lambda^k$ such that 
\begin{equation}\label{Degenerate Action DEL, Project First}
0 = \delta \widetilde{S}_h [\phi] = (\partial_2\mathcal{L}(x,\pi^k_h\phi, d\pi^k_h\phi), \pi^k_hv)_{L^2\Lambda^k} + (\partial_3\mathcal{L}(x,\pi^k_h\phi, d\pi^k_h\phi), d\pi^k_hv)_{L^2\Lambda^k}, \text{ for all } v \in \mathring{H}\Lambda^k.
\end{equation}
One can now integrate by parts in the second term, since the exterior derivative is taken after the projection. However, the issue with the latter degenerate action, $\widetilde{S}_h$, is that there is in general no associated degenerate Lagrangian density, i.e., there is in general no map $\widetilde{\mathcal{L}}_h: J^1_{H\Lambda^k} \rightarrow \Omega^{n+1}(X)$ such that $\widetilde{\mathcal{L}}_h(j^1_d\phi) = \mathcal{L}(x,\pi^k_h\phi, d\pi^k_h\phi)$. One would want there to be an associated degenerate Lagrangian density, in order to compare to the continuous theory, e.g., when examining convergence. 

Thus, the degenerate action $S_h$ has the issue that one cannot in general extract a boundary term in the variation, whereas the degenerate action $\widetilde{S}_h$ has the issue that one cannot in general associate to it a degenerate Lagrangian density. Both of these issues are resolved with the assumption that the projections commute with the exterior derivative, $\pi^{k+1}_hd\phi = d\pi^k_h\phi$, since then $\widetilde{S}_h = S_h$. We will henceforth assume this through the paper.

\begin{assumption}[Cochain Projections]\label{Cochain Projection Assumption}
The projections $\pi^m_h: H\Lambda^m \rightarrow \Lambda^m_h$ are cochain projections, i.e., $\pi^{k+1}_hd = d\pi^k_h$.
\end{assumption}
Furthermore, we will generally denote the projections as $\pi_h$, where the degree of the differential forms that they act on are suppressed for notational convenience. 

With this assumption, the two variational principles \eqref{Degenerate Action DEL, Derivative First} and \eqref{Degenerate Action DEL, Project First} are equivalent. However, even ignoring issues of degeneracy of the Lagrangian density itself, e.g., due to gauge freedom, these equivalent variational principles are underdetermined due to the nontrivial kernels of the projections. As such, the action is constant on fibers of the projection, which corresponds to a symmetry of the action. Thus, instead of enforcing the variational principle over the full field space, the finite-dimensional reduction to the problem is given by enforcing the variational principle over the discrete space: find $\phi \in \Lambda^k_h$ such that $\delta S[\phi]\cdot v = 0$ for all $v \in \Lambda^k_h$ with vanishing trace on the boundary; we denote the space of such $v$ by $\mathring{\Lambda}^k_h$. The variational principle thus yields a discrete weak form of the Euler--Lagrange equation: find $\phi \in \Lambda^k_h$ such that
\begin{equation}\label{DEL 1a}
0 = \delta S[\phi]\cdot v = \big(\partial_2\mathcal{L}(j^1_d\phi),v\big)_{L^2\Lambda^k} + \big(\partial_3\mathcal{L}(j^1_d\phi),dv\big)_{L^2\Lambda^{k+1}}, \text{ for all } v \in \mathring{\Lambda}^k_h.
\end{equation}
Integrating by parts, this gives
\begin{equation}\label{DEL 1b}
0 = \big(\partial_2\mathcal{L}(j^1_d\phi),v\big)_{L^2\Lambda^k} + \big(d^*\partial_3\mathcal{L}(j^1_d\phi),v\big)_{L^2\Lambda^{k}} + \int_{\partial X}v \wedge * \partial_3\mathcal{L}(j^1_d\phi), \text{ for all } v \in \mathring{\Lambda}^k_h,
\end{equation}
where the codifferential $d^*$ is interpreted in the weak sense. Note the boundary term vanishes since $v \in \mathring{\Lambda}^k_h$, but we include it explicitly since it will be necessary in the formulation of the multisymplectic form formula and Noether's theorem, where one generally has nonzero variations on the boundary.

We refer to these equivalent equations, (\ref{DEL 1a}) and (\ref{DEL 1b}), as the discrete Euler--Lagrange equations (DEL). Fixing a basis of shape functions $\{v_i\}$ for $\mathring{\Lambda}^k_h$, expressing $\phi = \phi^j v_j$, and choosing $v = v_i$, (\ref{DEL 1a}) is equivalent to a (generally nonlinear) system of equations for the unknown components $\phi^i$. Letting $[i]$ denote the set of indices $j$ such that $\text{supp}(v_j) \cap \text{supp}(v_i)$ has positive measure, the system of equations can be written as
$$ \big(\partial_2\mathcal{L}(j^1_d( \sum_{j \in [i]} \phi^j v_j)),v_i\big)_{L^2\Lambda^k} + \big(\partial_3\mathcal{L}(j^1_d(\sum_{j \in [i]}  \phi^j v_j)),dv_i\big)_{L^2\Lambda^{k+1}} = 0,\ i=1,\dots,\dim \mathring{\Lambda}^k_h. $$

In order to provide local statements of the multisymplectic form formula and Noether's theorem, we now localize the DEL. For a region $U \subset X$, we say that a node $i$ is an interior point of $U$ if $U$ contains all simplices touching $i$. Denote $\bar{U}$ as the union of all simplices touching interior nodes $i$ of $U$; we say that $U$ is regular if $U = \bar{U}$. We define the admissible variations with respect to a regular region $U$ as the space of all $v \in \mathring{\Lambda}^k_h$ such that $v|_U \in \mathring{\Lambda}^k_h(U)$. We define the localized action $S_U[\phi] = \int_U \mathcal{L}(j^1_d\phi)$ and the associated localized DEL,
\begin{align}\label{local DEL} 0 = \delta S_U[\phi]\cdot v &= \big(\partial_2\mathcal{L}(j^1_d\phi),v\big)_{L^2\Lambda^k(U)} + \big(\partial_3\mathcal{L}(j^1_d\phi),dv\big)_{L^2\Lambda^{k+1}(U)}  \\
&= \big(\partial_2\mathcal{L}(j^1_d\phi),v\big)_{L^2\Lambda^k(U)} + \big(d^*\partial_3\mathcal{L}(j^1_d\phi),v\big)_{L^2\Lambda^{k}(U)} + \int_{\partial U}v \wedge * \partial_3\mathcal{L}(j^1_d\phi), \nonumber
\end{align}
which is enforced for all regular $U$ and admissible $v$. As before, the boundary term vanishes for admissible $v$, but we write it explicitly as it will arise later. 
\begin{prop}
The localized DEL (\ref{local DEL}), ranging over all regular $U$ and admissible $v$, are equivalent to the DEL (\ref{DEL 1b}).
\begin{proof}
To see that the localized DEL imply the DEL, choose $U = X$ which is trivially regular; the space of admissible variations with respect to $X$ is then just $\mathring{\Lambda}^k_h$. To see that the DEL imply the localized DEL, let $U$ be regular and $v$ be admissible. Since $\text{supp}(v) \subset U$, the integrals over $X$ in the DEL can be replaced by integrals over $U$.
\end{proof}
\end{prop}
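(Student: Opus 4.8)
The plan is to establish the two implications separately, since ``equivalent'' here means that the single global equation (\ref{DEL 1b}), enforced over all of $\mathring{\Lambda}^k_h$, and the family of localized equations (\ref{local DEL}), enforced over all regular $U$ and all admissible $v$, cut out the same set of discrete fields $\phi \in \Lambda^k_h$.

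For the implication (\ref{local DEL}) $\Rightarrow$ (\ref{DEL 1b}), I would simply specialize to $U = X$. One checks from the definitions that $X$ is itself a regular region: every node of $\mathcal{T}_h$ has all of its touching simplices contained in $X$, so every node is an interior node of $X$, whence $\bar{X}$ is the union of all simplices, i.e.\ $\bar X = X$. Moreover the admissible variations with respect to $X$ are exactly $\mathring{\Lambda}^k_h$, because the condition ``$v|_X \in \mathring{\Lambda}^k_h(X)$'' is vacuous beyond $v \in \mathring{\Lambda}^k_h$. Hence (\ref{local DEL}) with $U=X$ is literally (\ref{DEL 1b}).

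For the converse, fix a regular $U$ and an admissible $v$. The key observation is that such a $v$ is supported in $U$: by definition of admissibility $v$ is a combination of shape functions attached to interior nodes of $U$ (the localization $\mathring{\Lambda}^k_h \to \mathring{\Lambda}^k_h(U)$ being by restriction and extension by zero), and the star of an interior node lies in $\bar U = U$ by regularity. Consequently every integrand occurring in (\ref{DEL 1b}) --- the $L^2\Lambda^k$-pairing of $v$ with $\partial_2\mathcal{L}(j^1\phi)$, the pairing of $dv$ with $\partial_3\mathcal{L}(j^1\phi)$ (equivalently, after the weak integration by parts, the pairing of $v$ with $d^*\partial_3\mathcal{L}(j^1\phi)$), and the boundary integrand $v \wedge *\partial_3\mathcal{L}(j^1\phi)$ on $\partial X$ --- vanishes identically outside $U$. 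Therefore the integral over $X$ coincides with the integral over $U$, and the boundary integral over $\partial X$ may be replaced by one over $\partial U$ (both being zero since $v$ has vanishing trace on $\partial U$); substituting $v$ into (\ref{DEL 1b}) thus yields exactly (\ref{local DEL}) for this $U$ and $v$.

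The proposition then follows with no analytic input beyond the integration-by-parts identity already used to pass between the two lines of (\ref{local DEL}). The only point requiring genuine care --- and the closest thing to an obstacle --- is the bookkeeping that justifies $\text{supp}(v) \subset U$ from the stated notions of interior node, regular region $U = \bar U$, and admissible variation, particularly for higher-order finite element spaces where shape functions are not in bijection with vertices; once one fixes the convention that admissible variations are precisely those globally supported in $U$ with vanishing trace on $\partial U$, the argument above goes through verbatim.
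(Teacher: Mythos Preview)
Your proof is correct and follows essentially the same approach as the paper's: specialize to $U=X$ for one direction, and use $\operatorname{supp}(v)\subset U$ to localize the integrals for the other. You are in fact more careful than the paper in justifying why $X$ is regular and why admissible $v$ can be taken to have support in $U$ via the regularity condition $\bar U = U$; the paper simply asserts $\operatorname{supp}(v)\subset U$ without comment.
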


In this section, we aim to elucidate the variational structure that arises from discretizing the variational principle utilizing cochain projections. Recalling that the Cartan form (\ref{Cartan form}) encodes the variational structure of a Lagrangian field theory, we will construct a discrete analogue of the Cartan form, which will naturally encode the variational structure of the discretized theory. 

We first show that the restricted variational principle over the finite-dimensional subspace $\mathring{\Lambda}^k_h$ can be interpreted as a Galerkin variational integrator. Restricting the configuration space to $\mathring{\Lambda}^k_h$, we can view the action as a function of the components $\phi^i$ in the expansion $\phi = \phi^i v_i$.
$$ S[\phi^i] = \int \mathcal{L}(x,\phi^i v_i, \phi^i dv_i). $$
Taking the variation of $S$ with respect to $\phi^j$, 
\begin{align*}
\frac{\delta S[\phi^i]}{\delta \phi^j} &= \int \Big( \frac{\delta \mathcal{L}}{\delta \phi} \cdot \frac{\delta (\phi^iv_i)}{\delta \phi^j} + \frac{\delta \mathcal{L}}{\delta (d\phi)} \cdot \frac{\delta (\phi^idv_i)}{\delta \phi^j} \Big) = \int \Big( \frac{\delta \mathcal{L}}{\delta \phi} \cdot v_j + \frac{\delta \mathcal{L}}{\delta (d\phi)} \cdot dv_j \Big) \\
&= (\partial_2\mathcal{L},v_j) + (\partial_3\mathcal{L},dv_j),
\end{align*}
which shows that the conditions $\delta S/\delta \phi^j = 0$ is equivalent to the DEL (\ref{DEL 1b}). Similarly, the localized DEL (\ref{local DEL}) is equivalent to the conditions $\delta S_U/\delta \phi^j = 0$ for all interior nodes $j$. That is, the DEL can be interpreted as a Galerkin variational integrator. From this viewpoint of the DEL, we see that given appropriate choices of function spaces (and possibly a choice of quadrature rule), our discrete Euler--Lagrange equation reproduces multisymplectic variational integrators based on finite differences or nodal value finite element spaces (e.g., as discussed in \citet{MaPaSh1998} and \citet{Ch2008}). However, the discrete variational principle in the form $\delta S[\phi]\cdot v = 0$, for $\phi \in \Lambda^k_h$ and $v \in \mathring{\Lambda}^k_h$, is expressed explicitly at the level of function spaces and hence, will allow us to examine the discrete variational structure more directly. Along with allowing more general approximating finite element spaces, this also has the advantage of stating properties of the discrete variational principle at the level of function spaces. Consequently, as we will see, properties such as multisymplecticity and Noether's theorem can be stated in a geometric way, which makes no explicit reference to finite differencing or quadrature.

By the above, we can view the Lagrangian structure associated to the equations (\ref{DEL 1a}) as the restriction of the full Lagrangian structure to the discrete space. The next natural question to ask would be: is there some sense in which the discrete equations, which arises as a restriction of the variational principle, can instead be viewed as a variational principle on the full configuration bundle? Since we assume that the projection maps $\pi_h: H\Lambda^m \rightarrow \Lambda^m_h$ are cochain projections on the Hilbert de Rham complex, there is a natural relation between the dynamics of the restricted Lagrangian structure and variations on the full space of the degenerate Lagrangian. To see this, recall that we view the Lagrangian density as a map on the space of sections, $\mathcal{L}: J^1_{H\Lambda^k} \rightarrow \Omega^{n+1}(X)$. Furthermore, recall the degenerate Lagrangian density, $\mathcal{L}_h: J^1_{H\Lambda^k} \rightarrow \Omega^{n+1}(X)$ given by $\mathcal{L}_h(j^1_d\phi) = \mathcal{L}(x,\pi_h\phi,\pi_h d\phi)$ with associated degenerate action $S_h[\phi] = \int_X \mathcal{L}_h(j^1_d\phi)$. In the case of a cochain projection, we can then view the variations of $S$ restricted to $\Lambda^k_h$ as variations of $S_h$ on the full configuration bundle. 

\begin{prop}{\textbf{(Naturality of Discrete Variational Structure)}}\label{Naturality}
\\ The restricted variational structures are related to the degenerate variational structures by
\begin{align}
\mathcal{L}(j^1_d\pi_h\phi ) &= \mathcal{L}_h(j^1_d\phi), \label{Naturality Eqn 1} \\ 
\delta S[\pi_h\phi]\cdot \pi_hv &= \delta S_h[\phi]\cdot v, \label{Naturality Eqn 2}
\end{align}
for $\phi \in H\Lambda^k$ and $v \in H\Lambda^k$.
\begin{proof}
For (\ref{Naturality Eqn 1}), since $\pi_h$ is a cochain projection,
$$ \mathcal{L}(j^1_d\pi_h\phi) = \mathcal{L}(x,\pi_h\phi, d\pi_h\phi) = \mathcal{L}(x,\pi_h\phi, \pi_h d\phi) = \mathcal{L}_h(x,\phi,d\phi) = \mathcal{L}_h(j^1_d\phi). $$
Thus, it follows that $S[\pi_h\phi] = S_h[\phi]$.

Then, (\ref{Naturality Eqn 2}) follows similarly using the cochain property,
\begin{align*}
\delta S[\pi_h\phi]\cdot \pi_h v &= \frac{d}{d\epsilon}\Big|_{\epsilon = 0} S[\pi_h\phi + \epsilon \pi_hv)] = \frac{d}{d\epsilon}\Big|_{\epsilon = 0} S[\pi_h(\phi + \epsilon v)]
\\ &= \frac{d}{d\epsilon}\Big|_{\epsilon = 0} S_h[\phi + \epsilon v] = \delta S_h[\phi]\cdot v.
\end{align*}
\end{proof}
\end{prop}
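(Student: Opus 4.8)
The plan is to establish the two identities in sequence, each being a direct consequence of Assumption~\ref{Cochain Projection Assumption} (the cochain property $d\pi_h = \pi_h d$) together with the linearity of the projection $\pi_h$. Identity~(\ref{Naturality Eqn 1}) is a pointwise (in $X$) statement about the map induced on sections, and I would prove it by unwinding the definition of the first jet extension; identity~(\ref{Naturality Eqn 2}) is then deduced from it by integrating over $X$ to obtain the factorization $S_h = S \circ \pi_h$ and differentiating via the chain rule.

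For~(\ref{Naturality Eqn 1}), I would begin from $j^1\pi_h\phi = (x,\pi_h\phi,d\pi_h\phi)$, the jet extension of the projected section. The cochain property rewrites the third slot as $\pi_h d\phi$, so that $j^1\pi_h\phi = (x,\pi_h\phi,\pi_h d\phi)$; it is precisely this step that guarantees $j^1\pi_h\phi$ is a genuine (holonomic) jet, so the left-hand side makes sense as $\mathcal{L}$ evaluated on an honest first jet extension. On the other hand, by the definition of the degenerate Lagrangian density accompanying~(\ref{Degenerate Action}), $\mathcal{L}_h(j^1\phi) = \mathcal{L}_h(x,\phi,d\phi) = \mathcal{L}(x,\pi_h\phi,\pi_h d\phi)$, which is the same expression; hence the two sides agree as $(n+1)$-forms on $X$.

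For~(\ref{Naturality Eqn 2}), integrating~(\ref{Naturality Eqn 1}) over $X$ gives $S[\pi_h\phi] = \int_X \mathcal{L}(j^1\pi_h\phi) = \int_X \mathcal{L}_h(j^1\phi) = S_h[\phi]$, i.e.\ $S_h = S \circ \pi_h$ as functionals on $H\Lambda^k$. Using the identification $T(H\Lambda^k)\cong H\Lambda^k\times H\Lambda^k$, the constant vertical vector field $V_v$ has flow $\epsilon\mapsto\phi+\epsilon v$ through $\phi$; because $\pi_h$ is linear and bounded, its tangent map carries this to the curve $\epsilon\mapsto\pi_h\phi+\epsilon\pi_h v = \pi_h(\phi+\epsilon v)$, which is the flow of $T\pi_h\cdot V_v$ through $\pi_h\phi$. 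Differentiating at $\epsilon=0$ and invoking $S_h = S\circ\pi_h$ then yields $dS[\pi_h\phi]\cdot(T\pi_h\cdot V_v) = \tfrac{d}{d\epsilon}\big|_{0}\,S[\pi_h(\phi+\epsilon v)] = \tfrac{d}{d\epsilon}\big|_{0}\,S_h[\phi+\epsilon v] = dS_h[\phi]\cdot V_v$.

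Since the argument is essentially a naturality/chain-rule computation, I do not anticipate a genuine obstacle; the only points deserving care are the identification of the tangent map $T\pi_h$ with $\pi_h$ itself on fibers (which relies on $\pi_h$ being a linear map between the linear spaces $H\Lambda^m$ and $\Lambda^m_h$) and the Gateaux differentiability of the action functionals, which I take as granted for the class of Lagrangians under consideration (polynomial in $\phi$ and $d\phi$ on the compact domain $X$), as remarked earlier in this section.
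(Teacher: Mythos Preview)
Your proof is correct and follows essentially the same route as the paper: you use the cochain property to rewrite $d\pi_h\phi$ as $\pi_h d\phi$ for identity~(\ref{Naturality Eqn 1}), then integrate to obtain $S_h = S\circ\pi_h$ and differentiate along the flow $\epsilon\mapsto\phi+\epsilon v$ (pushed forward by the linear map $\pi_h$) for identity~(\ref{Naturality Eqn 2}). The only difference is cosmetic: you make explicit the intermediate step of integrating~(\ref{Naturality Eqn 1}) and the identification of $T\pi_h$ with $\pi_h$ via linearity, whereas the paper folds these into the chain of equalities.
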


\begin{remark}
The above proposition is subtle, in that there are two degenerate actions that one could define, recalling $S_h$ and $\widetilde{S}_h$ defined by \eqref{Degenerate Action DEL, Derivative First} and \eqref{Degenerate Action DEL, Project First}, respectively. 
which are both maps from $H\Lambda^k \rightarrow \mathbb{R}$. As discussed previously, the latter is not, without the cochain projection assumption, holonomic in the sense that it does not implicitly depend on $\phi$ through its exterior jet extension, $j^1_d\phi = (x,\phi,d\phi)$, while the former is. Thus, the former degenerate action is the more suitable degenerate action when comparing the discrete theory to the continuous theory, due to its holonomic dependence on $j^1_d\phi$. On other hand, as previously remarked, the former action has the issue that, without the cochain property, one cannot make sense of a boundary term in the variation (which we will need to make sense of a discrete Cartan form), whereas one can in the latter. Assuming a cochain projection, these respective issues of the two degenerate actions are both solved simultaneously, since $S_h = S \circ \pi_h$. 
\end{remark}

The naturality equations \eqref{Naturality Eqn 1} and \eqref{Naturality Eqn 2} reveal that the process of discretization of the variational principle, i.e., by restricting the action and its variations to a finite-dimensional subspace, with the assumption of cochain projections for discretization, is itself associated to an action which arises from a holonomic Lagrangian density on the full field space. Simply put, the discretization is compatible with the structure of a Lagrangian theory. A corollary is that equation \eqref{DEL 1a} can be seen as either arising from the discrete variation of the full action $S$ at a discrete field $\pi_h\phi$, or as from the full variation of the discrete action $S_h$ at the full field $\phi$. This shows that the variations associated to $S_h$ on the full field space are degenerate, since they are equivalently given by the variations of $S$ on the projected space. Thus, the finite-dimensionality of the restricted variational principle on $S$ can be interpreted as the degeneracy of the variational principle of $S_h$ on the full space, where two fields are equivalent if their difference is in $\ker(\pi_h)$. In other words, our finite-dimensional variational problem on the discrete space arises as a degenerate (symmetric) variational problem over the infinite-dimensional space, where the set of equivalence classes forms a finite-dimensional space, with the canonical representative $i_h\pi_h\phi$ for the equivalence class of $\phi$, where $i_h:\Lambda^k_h \hookrightarrow H\Lambda^k$ is the inclusion map. 

Furthermore, the above naturality relation shows that projecting the equations obtained from the variational principle applied to the continuum action is equivalent to first discretizing the action through the projection and subsequently applying the variational principle. Thus, when discretizing via cochain projections, the variational principle and discretization commute:
\[\begin{tikzcd}
	{S: H\Lambda^k \rightarrow \mathbb{R}} &&& {S_h: \Lambda^k_h \rightarrow \mathbb{R}} \\
	\\
	\\
	{\text{Weak EL}} &&& {\text{Discrete EL .}}
	\arrow["{\substack{\text{Variational} \\ \text{Principle}}}"', from=1-1, to=4-1]
	\arrow["{\text{Discretize}}", from=1-1, to=1-4]
	\arrow["{\quad \text{Discretize}}", from=4-1, to=4-4]
	\arrow["{\substack{\text{Variational} \\ \text{Principle}}}", from=1-4, to=4-4]
\end{tikzcd}\]
This generalizes the result of \citet{Le2004} where it was shown that discretization via discrete exterior calculus and the variational principle commute in the case of electromagnetism. In particular, the result of \citet{Le2004} follows from the above, since one can view discrete exterior calculus in the framework of finite element exterior calculus as a particular low-order example; namely, through the use of Whitney forms. 

As a final remark on the above naturality relation, a more fundamental issue for discretization is whether one should discretize at the level of the configuration bundle or the jet bundle. One can discretize the field first via $\phi \mapsto \pi_h\phi$ and take the argument of the Lagrangian density to be $j^1_d\pi_h\phi = (x,\pi_h\phi,d\pi_h\phi)$, or one can take the argument of the Lagrangian density to be $(x, \pi_h\phi, \pi_hd\phi)$; in general, these methods are not equivalent. However, in the case of cochain projections, these two discretization processes are equivalent, i.e., the following diagram commutes

\centerline{ \xymatrix@C+5pc{ \phi \ar@{|->}[r]^{\pi_h^k} \ar@{|->}[d]^{j^1_d} & \pi_h\phi \ar@{|->}[d]^{j^1_d} \\ j^1\phi \ar@{|->}[r]^{\pi^k_h \times \pi^{k+1}_h} & j^1_d(\pi_h\phi),  } }

so there is no ambiguity as to which discretization procedure to use. Furthermore, regarding Assumption \ref{Cochain Projection Assumption}, the above diagram shows that we only need the existence of the space $\Lambda^{k+1}_h$ and the projection $\pi^{k+1}_h$ such that the above diagram commutes and thus, one can perform the discretization solely using $\Lambda^k_h$ and $\pi^k_h$, without reference or implementation of $\Lambda^{k+1}_h$ and $\pi^{k+1}_h$. In particular, as discussed in, for example, \citet{ArFaWi2006, ArFaWi2010} and \citet{Ar2018}, there is a large class of classical finite element spaces for which such cochain projections exist, so the discussion is broadly applicable.

In order to state discrete analogues of the multisymplectic form formula and Noether's theorem, we will have to consider variations with nonzero boundary trace with respect to a regular region $U$. To do this, let $U$ be a regular region and let $v \in \Lambda^k_h$, and consider $v$ restricted to $U$. In general, since we no longer assume that $v$ is an admissible variation relative to $U$, $v$ may have nonzero trace along $\partial U$. Decompose $v = v_\partial + v_{in}$ where $v_\partial$ denotes the boundary component of $v$ consisting of the expansion of $v$ with respect to all shape functions which have nonzero trace on $\partial U$ and $v_{in} = v - v_\partial$ corresponds to the expansion of $v$ into shape functions with vanishing trace on the boundary. Let $\mathcal{T}[\partial U]$ denote the set of all top-dimensional elements in $\mathcal{T}_h$ on which shape functions with nonvanishing trace on $\partial U$ are supported. 

\begin{remark}
If one considers Lagrange polynomial nodal shape functions (corresponding to point value degrees of freedom), then the shape functions which are nonzero on the boundary are those associated to the nodes on $\partial U$. In this case, $\mathcal{T}[\partial U]$ consists of those top-dimensional elements touching the boundary, i.e., the one-ring of the boundary $\partial U$. For general (local) shape functions, internal nodes may give rise to shape functions which are nonzero on the boundary, so $\mathcal{T}[\partial U]$ will generally consist of the elements touching $\partial U$ and the elements touching those elements, i.e., the two-ring of the boundary $\partial U$. In any case, we consider discretization by the finite element method due to the local support property of the shape functions, which will allow the discrete Cartan form defined below to be localized on $\mathcal{T}[\partial U]$.
\end{remark}

We can now consider variations with nonvanishing trace on $\partial U$. In particular, we compute for a solution $\phi_h$ of the discrete Euler--Lagrange equation and for a variation $v$,
\begin{align*}
\delta S_U[\phi_h]\cdot v &=  \sum_{T \in \mathcal{T}[\partial U]} \int_T (\partial_2\mathcal{L}(j^1\phi_h)\wedge *v_\partial + \partial_3\mathcal{L}(j^1\phi_h)\wedge\star d v_{\partial}) = \delta S_U[\phi_h]\cdot v_{\partial},
\end{align*}
i.e., for a solution of the DEL, $\delta S_U[\phi_h]\cdot v = \delta S_U[\phi_h]\cdot v_{\partial}$, since $\delta S_U[\phi_h]\cdot v_{in} = 0$ by the DEL. This boundary variation formula will be our candidate for a discrete weak Cartan form, as it encodes the contribution to the action from $V$ nonvanishing on and near the boundary, and will allow us to state discrete analogues of the multisymplectic form formula and Noether's theorem. We refer to it as ``weak", since its definition involves integration and it is not a pointwise-defined quantity. Note that, unlike the weak Cartan form \eqref{Weak Cartan Form}, which required the higher regularity assumption $\partial_3\mathcal{L}(j^1_d\phi) \in H^1$, the above makes sense even when $\phi, v \in H\Lambda^k$. However, if the finite element subspace does have enough regularity to make sense of the pairing of the traces on the boundary, the above can be rewritten as
\begin{align*}
\delta S_U[\phi_h]\cdot v &= \int_{\partial U}  v \wedge * \partial_3\mathcal{L}(j^1\phi_h) + \sum_{T \in \mathcal{T}[\partial U]} \int_T (\partial_2\mathcal{L}(j^1\phi_h) + d^*\partial_3\mathcal{L}(j^1\phi_h))\wedge\star v_{\partial}.
\end{align*}

\begin{definition}[Discrete Weak Cartan Form]\label{Discrete Cartan Form}
The discrete weak Cartan form on a regular region $U$, evaluated at a field $\phi \in \Lambda^k_h$ and a variation $v$, is defined by
\begin{equation}\label{Discrete Cartan Form 1}
\Theta^h_U(\phi) \cdot v \equiv \delta S_U[\phi]\cdot v_{\partial}.
\end{equation}
\end{definition}

\begin{remark}\label{Discrete Cartan Form Remark}
Analogous to our discussion of the weak Cartan form in Section \ref{Weak Lagrangian Field Theory Section}, we can instead think of the discrete weak Cartan form as acting on vector fields. We make identification $T\Lambda^k_h \cong \Lambda^k_h \times \Lambda^k_h$, so that a vector field can be viewed as a map $V: \Lambda^k_h \rightarrow \Lambda^k_h$. Hence, the action of the discrete weak Cartan form on $V$ can be expressed as
$$ \mathbf{\Theta}^h_U (\phi) \cdot V = \Theta^h_U(\phi)\cdot V(\phi). $$
This identification will be useful when we prove the discrete multisymplectic form formula, since we will view first variations as vector fields on $\Lambda^k_h$ whose flow preserves the DEL. 
\end{remark}

Even though the weak Cartan form only involves integration on $\partial U$ whereas the discrete weak Cartan form involves integration on $\partial U$ and over regions $T \in \mathcal{T}[\partial U]$, this is the appropriate definition in the discrete setting because it encodes the boundary variation of the action, i.e., it equals the variation of the action when the discrete Euler--Lagrange equations are imposed. 


\begin{remark}[Quadrature]\label{Remark on Quadrature}
Although, in our exposition, we have assumed that with the given Lagrangian and choice of finite element space, one can evaluate the integrals involved exactly, one can more generally utilize quadrature to approximate the action before enforcing the variational principle. For a regular region $U$, let us consider quadrature nodes $\{c_a \in U\}$ and associated quadrature weights $\{b_a\}$. With finite element shape functions $\{v_j\}$ and expressing the density as $\mathcal{L} = L d^{n+1}x$, the associated discrete action is given by applying quadrature,
\begin{equation}\label{Discrete Action with Quadrature}
\mathbb{S}_U[\{\phi^j\}] = \sum_a b_a L(j^1_d(\phi^iv_i))|_{c_a}.
\end{equation}
The variation in the direction $w = w^kv_k$ is given by
\begin{equation}\label{Discrete Variation with Quadrature}
\delta \mathbb{S}_U[\{\phi^j\}]\cdot \{w^k\} = \sum_a b_a \frac{\partial}{\partial \phi^k} \Big[ L(j^1_d(\phi^iv_i))\big|_{c_a} \Big] w^k.
\end{equation}
The associated discrete Euler--Lagrange equation is given by enforcing the variational principle for variations $w$ with vanishing trace on $\partial U$. Then, the discrete Cartan form with quadrature (at a solution of the discrete Euler--Lagrange equation), is defined by taking an arbitrary variation and removing the term on the interior which vanishes by the discrete Euler--Lagrange equation. In particular, it is given by summing over all $a$ such that $c_a$ is contained in the support of some shape function with nonvanishing trace on the boundary; we denote the set of all such $a$ by $\mathcal{I}[\partial U]$. Hence, the discrete Cartan form with quadrature is given by 
$$ \Uptheta^h_U(\phi)\cdot w = \sum_{a \in \mathcal{I}[\partial U]} b_a \frac{\partial}{\partial \phi^k} \Big[ L(j^1_d(\phi^iv_i))\big|_{c_a} \Big] w^k. $$

Using this discrete Cartan form, an analogous statement of discrete multisymplecticity that we state below holds in this setting, with the caveat that the first variations are defined relative to the discrete Euler--Lagrange equations with quadrature. Similarly, an analogous statement to the discrete Noether's theorem below also holds in this setting, with the caveat that the group action leaves the discrete action with quadrature, equation (\ref{Discrete Action with Quadrature}), invariant. This is a direct consequence of the fact that the formulation with quadrature is still variational, since we applied the quadrature rule to the action, before enforcing the variational principle (see Section \ref{Variational Complex Section}). In general, if one applies quadrature after enforcing the variational principle, i.e., to the equations of motion (\ref{DEL 1b}), the system is not variational. To see this, we compute the variation of the action first,
$$ \delta S_U[\phi^jv_j]\cdot (w^kv_k) = \int_X [\partial_2\mathcal{L}(j^1_d(\phi^iv_i)) + d^*\partial_3\mathcal{L}(j^1_d(\phi^iv_i))]\wedge \star w^kv_k, $$
(for $w$ with vanishing trace on $\partial U$) and subsequently apply quadrature, so that the above becomes
$$ \sum_a b_a\Big[*\Big( [\partial_2\mathcal{L}(j^1_d(\phi^iv_i)) + d^*\partial_3\mathcal{L}(j^1_d(\phi^iv_i))]\wedge \star w^kv_k \Big) \Big]\Big|_{c_a}. $$
In general, this is not equal to (\ref{Discrete Variation with Quadrature}), except when $\phi$ a scalar field, using nodal interpolating shape functions and quadrature points at those nodes, in which case they are the same. Thus, for a variational formulation, one should generally apply quadrature before enforcing the variational principle. For the rest of the paper, we will revert to the assumption that one can evaluate the various integrals exactly, but keeping in mind that similar results hold in the case of quadrature. 
\end{remark}

We make several additional remarks regarding this candidate (\ref{Discrete Cartan Form 1}) for a discrete Cartan form. We defined the discrete Cartan form as the variation of the action, for variations that may be nonvanishing on the boundary, at a solution of the discrete Euler--Lagrange equations. Even though this functional involves integration over top-dimensional regions $T \in \mathcal{T}[\partial U]$, it only depends on the degrees of freedom which contribute to the nonzero value of $V$ on $\partial U$ and so makes sense as a candidate for a discrete Cartan form. In the continuum variational problem, boundary variations can be supported arbitrarily close to $\partial U$, whereas in the finite element variational problem, this is not the case, so the discrete Cartan form, which encodes the contribution of the variation of the action by boundary variations, should indeed contain the additional terms involving integration over the elements of $\mathcal{T}[\partial U]$. These terms shrink relative to the integral over $\partial U$ in the following heuristic sense. The terms involving $\mathcal{T}[\partial U]$ are $O(h)$ smaller than the term over $\partial U$: the cardinality of $\mathcal{T}[\partial U]$ scales like the number of boundary faces in $\partial U$, which is $O(h^{-n})$; on the other hand, the size of $T$ is $O(h^{n+1})$, so the terms in the discrete Cartan form involving the sum over $\mathcal{T}[\partial U]$ is $O(h)$, whereas the first term is $O(1)$ for a fixed region $U$. Thus, as $h \rightarrow 0$, for a fixed region $U$, the Cartan form formally only involves the first contribution, as expected. In other words, as we refine the mesh, $\partial U$ stays (roughly) the same, while the region containing only elements touching $\partial U$ shrinks, and a similar remark applies to the discrete multisymplectic form formula and the additional terms involving the sum over $\mathcal{T}[\partial U]$, so that the multisymplectic form formula is formally recovered in the limit. This can be combined with bounds on the integrands to show convergence more rigorously. More precisely, to show that the discrete weak Cartan form converges to the weak Cartan form, we would aim to show that for a solution $\phi_h$ of the DEL and a solution $\phi$ of the weak Euler--Lagrange equations and a variation $v$,
$$ \left|\Theta^h_U(\phi_h)\cdot\pi_hv - \Theta_U(\phi)\cdot v\right| \rightarrow 0, $$
as $h \rightarrow 0^+$. This error can be decomposed as
\begin{align*}
\left|\Theta^h_U(\phi_h)\cdot\pi_hv - \Theta_U(\phi)\cdot v\right| &\leq \left|\left(\Theta^h_U(\phi_h) - \Theta^h_U(\pi_h\phi)\right) \cdot \pi_hv\right| \\ &\quad + \Big|\Theta_U(\phi)\cdot (v-\pi_hv)\Big| + \left|\left(\Theta_U(\phi)-\Theta^h_U(\pi_h\phi)\right)\cdot\pi_hv\right|.
\end{align*}
The first term on the right hand side can be shown to converge with an appropriate quasi-optimatility bound between the discrete solution $\phi_h$ and the projected weak solution $\pi_h\phi$, and assuming the projection $\pi_h$ is bounded. The second term can be shown to converge if $\|v-\pi_hv\|$ converges in some appropriate norm. The third term can be shown to converge with an appropriate quasi-optimality bound between the projected weak solution and the weak solution, and again assuming the projection is bounded. Here, ``appropriate" qualifies the fact that the above terms involve the Cartan form which is defined in terms of derivatives of the Lagrangian density and hence, will be dependent on the particular theory under consideration. We provide an example in Section \ref{Numerical Example Section}.

We now show that Definition \ref{Discrete Cartan Form} recovers the notion of the discrete Cartan form introduced in \citet{MaPaSh1998} and further examined in \citet{Ch2008}, in the case that the degrees of freedom are the nodal values of the field with nodal interpolating shape functions. As previously remarked, in this case, the shape functions which are nonzero on $\partial U$ are those associated to nodes on $\partial U$. Consider a single node $i$ on $\partial U$ and let $v_i$ be the shape function associated to the degree of freedom on the node. Note that $v_i$ (restricted to $U$) is supported in some $T_i \in \mathcal{T}[\partial U]$ and denote $F_i = \partial T_i \cap \partial U$. Consider a variation of the form $V_i v_i\ (V_i \in \mathbb{R})$. \citet{MaPaSh1998} and \citet{Ch2008} define the discrete Cartan form associated to this node as $\frac{\delta S_U[\phi^jv_j]}{\delta \phi^i} V_i$ (no summation over $i$), viewing the action as a function of the components in the expansion of $\phi = \phi^jv_j$. Then, compute
\begin{align*}
\frac{\delta S_U[\phi^jv_j]}{\delta \phi^i} V_i &= \int_U \Big( \frac{\delta \mathcal{L}}{\delta \phi} \cdot \frac{\delta (\phi^jv_j)}{\delta \phi^i} V_i + \frac{\delta \mathcal{L}}{\delta (d\phi)} \cdot \frac{\delta (\phi^j dv_j)}{\delta \phi^i} V_i \Big) \\ 
&= \int_U \Big( \frac{\delta \mathcal{L}}{\delta \phi} \cdot V_iv_i + \frac{\delta \mathcal{L}}{\delta (d\phi)} \cdot V_i dv_i \Big) = \int_{U} \Big( \partial_2 \mathcal{L} \wedge \star V_iv_i + \partial_3\mathcal{L} \wedge \star V_idv_i \Big).
\end{align*}
Summing over all such variations on each node on $\partial U$, one recovers our discrete Cartan form, equation \eqref{Discrete Cartan Form 1}. There are several generalizations which our discrete Cartan form makes relative to the discrete Cartan form of \citet{MaPaSh1998} and \citet{Ch2008}. First, note that their Cartan form is defined in terms of the nodal values of the field, which implicitly suppresses the fact that the Cartan form involves integration over both $\partial U$ and elements of $\mathcal{T}[\partial U]$. Our explicit formula for the discrete Cartan form lends itself more easily to showing convergence to the continuum Cartan form, as we sketched heuristically above and will discuss further when discussing Noether's theorem. That the discrete Cartan form involves integration over elements neighboring the boundary is inevitable, since a variation of the field value on the boundary induces changes to the field values on elements of $\mathcal{T}[U]$. Furthermore, since we allow for general finite element spaces, we immediately obtain several generalizations. First, note that the dimension of the spacetime is arbitrary in our formulation, so this discrete Cartan form holds beyond the $1+1$ spacetime dimensions that they utilize explicitly in their framework (although this is not a fundamental restriction in their theory). Furthermore, our framework allows for differential forms of arbitrary degree, as opposed to just scalar fields. In particular, the degrees of freedom associated to the boundary variations need not be nodal values, but can be determined by more general degrees of freedom, such as moments or flux type degrees of freedom, e.g., when considering a theory involving vector fields, which one can identify with $1-$forms via the metric. Furthermore, these degrees of freedom determining the boundary variations may be close to, i.e., in $\mathcal{T}[\partial U]$, but not necessarily on $\partial U$. 

In the next two sections, we will utilize the discrete Cartan form to state discrete analogues of multisymplecticity and Noether's theorem. We will see that these statements, involving $\Theta^h_U$, will be in direct analogy to the continuum theorems, involving $\Theta_U$.

\subsection{Discrete Multisymplectic Form Formula}\label{Multisymplectic Section}

We now state a discrete analogue of the multisymplectic form formula, which generalizes the preservation of the symplectic form under the flow of a symplectic vector field. In the smooth setting, if $\phi$ is a solution to the Euler--Lagrange equations and $V, W$ are first variations at $\phi$, i.e., their respective flows on $\phi$ are still solutions, then 
\begin{equation}\label{Multisymplectic Form Formula}
\int_{\partial U}(j^1\phi)^*\Big( j^1V \lrcorner\, j^1W\lrcorner\, \Omega_{\mathcal{L}} \Big) = 0, 
\end{equation}
where $U \subset X$ is a submanifold with smooth closed boundary (\citet{MaPaSh1998}). The multisymplectic form formula encompasses many physical conservation laws appearing in Lagrangian field theories. For example, viewing a Lagrangian field theory in the instantaneous canonical formulation, multisymplecticity gives rise to the usual field-theoretic notion of symplecticity (\citet{MaPaSh1998}). Furthermore, multisymplecticity encompasses the notion of reciprocity in many physical systems, relating the infinitesimal perturbation of a system by a source and the associated infinitesimal perturbation of the response by the system (see, for example, \citet{VaLiLe2011} for Lorenz reciprocity in electromagnetism and \citet{McAr2020} for reciprocity in semilinear elliptic PDEs, within the context of multisymplecticity). Additionally, for wave propagation problems, multisymplecticity provides a geometric formulation for the conservation of wave action (\citet{Br1997, Br1997no2}). Since multisymplecticity is an important property of Lagrangian field theories encompassing many natural physical conservation laws, we will investigate multisymplecticity within our discretization framework. 

In the literature, integrators which admit a discrete analogue of this formula are referred to as ``multisymplectic integrators''. We show that our discrete system (\ref{DEL 1b}) admits a discrete multisymplectic form formula. The main idea of the derivation for the multisymplectic form formula is to look at second variations of the action at $\phi$ with respect to first variations $V$ and $W$, $d^2S[\phi]\cdot (V,W) = 0$. More specifically, one decomposes the variation of the action into two functionals, corresponding to interior and boundary variations:
\begin{align*}
dS[\phi]\cdot V = \underbrace{-\int_U (j^1\phi)^* (j^1V \lrcorner\, \Omega_{\mathcal{L}})}_{\equiv \text{EL}_U(\phi)\cdot V} + \underbrace{\int_{\partial U}(j^1\phi)^*(j^1V \lrcorner\, \Theta_{\mathcal{L}})}_{=\Theta_U(\phi)\cdot V}.
\end{align*}
Then, $0 = d^2S[\phi]\cdot (V,W) = d\text{EL}_U(\phi)\cdot (V,W) + d\Theta_U(\phi)\cdot (V,W)$. The term $d\text{EL}_U(\phi)\cdot (V,W)$ vanishes from the first variation property, so the multisymplectic form formula can be expressed as
$$ d\Theta_U(\phi)\cdot (V,W) = 0, $$
which is equivalent to equation (\ref{Multisymplectic Form Formula}).

In our construction, the first difference is that we are working in the weak setting, as discussed in Section \ref{Weak Lagrangian Field Theory Section}. Furthermore, in the discretized theory, the main impediment for a discrete analogue of the multisymplectic form formula is that a solution of the discrete equation (\ref{DEL 1b}) does not in general satisfy an Euler--Lagrange equation locally (i.e., for arbitrary $U$) but rather integrated over a regular region $U$. Additionally, there is an additional contribution from the boundary components of the variation in the elements neighboring the boundary $T \in \mathcal{T}[\partial U]$. It is in this restricted setting that we have a discrete multisymplectic form formula.

To prepare for the proof of the discrete multisymplectic form formula, we will express variations in terms of vector fields. As briefly discussed in Remark \ref{Discrete Cartan Form Remark}, we can express the action of the Cartan form in terms of vector fields, instead of variations, which follows from the identification $T(\Lambda^k_h) \cong \Lambda^k_h \times \Lambda^k_h$ and thus, a vector field $V \in \mathfrak{X}(\Lambda^k_h)$ can be viewed as a map $V: \Lambda^k_h \rightarrow \Lambda^k_h$. Similarly, for a vector field $V$, the variation of the action can be expressed as
$$ dS[\phi]\cdot V = \delta S [\phi] \cdot V(\phi). $$
Furthermore, we decompose a vector field into its interior and boundary components,
\begin{align*}
V_{in}&: \phi \mapsto (V(\phi))_{in}, \\
V_{\partial}&: \phi \mapsto (V(\phi))_{\partial}.
\end{align*}

\begin{theorem}{\textbf{(Discrete Multisymplectic Form Formula)}}\label{Discrete Multisymplectic Form Theorem}
Let $U$ be a regular region and let $\phi_h$ be a solution of the local DEL (\ref{local DEL}) and $V, W \in \mathfrak{X}(\Lambda^k_h)$ be first variations for $\phi_h$, i.e., their flow on $\phi_h$ still satisfies the DEL, but for arbitrary boundary variations, then
\begin{equation}\label{Discrete Multisymplectic Form Formula 1}
d\mathbf{\Theta}^h_U(\phi_h)\cdot (V,W) = 0.
\end{equation}
\begin{proof}
Decompose the variation of the action into interior and boundary variations,
$$ dS_U[\phi_h]\cdot V = \underbrace{dS_U[\phi_h] \cdot V_{in}}_{\equiv \textbf{EL}^h_U(\phi_h)\cdot V} + \underbrace{dS[\phi_h] \cdot V_{\partial}}_{= \mathbf{\Theta}^h_U(\phi_h) \cdot V}. $$
so that $dS[\phi_h]\cdot V = \textbf{EL}^h_U(\phi_h)\cdot V + \mathbf{\Theta}^h_U(\phi_h) \cdot V$. Observe that, by definition of $\textbf{EL}^h_U$, the DEL is equivalent to the statement that $\textbf{EL}^h_U(\phi_h) \cdot V = 0$ for all $V \in \mathfrak{X}(\Lambda^k_h)$. Thus, we define a first variation $W$ as a vector field which preserves the DEL, $d(\textbf{EL}^h_U(\phi_h) \cdot V)\cdot W = 0$. Thus, we have
$$0 = d^2S_U[\phi_h]\cdot (V,W) = d\textbf{EL}^h_U(\phi_h)\cdot (V,W) + d\mathbf{\Theta}^h_U(\phi_h)\cdot (V,W).$$
Then, express
$$ d\textbf{EL}^h_U(\phi_h)\cdot (V,W) = d( \textbf{EL}^h_U(\phi_h)\cdot V)\cdot W - d(\textbf{EL}^h_U(\phi_h)\cdot W)\cdot V - \textbf{EL}^h_U(\phi_h)\cdot [V,W]. $$
The first two terms on the right hand side of the above equation vanish by the definition of first variation; furthermore, the third term vanishes by the DEL. Hence, $d\textbf{EL}^h_U(\phi_h)\cdot (V,W) = 0$. Thus, we have
$$ d\Theta^h_U(\phi_h)\cdot (V,W) = d^2S_U[\phi_h]\cdot (V,W) = 0. $$
\end{proof}
\end{theorem}

\begin{remark}
Although we immediately see that the discrete multisymplectic form formula $d\mathbf{\Theta}^h_U(\phi_h)\cdot (V,W) = 0$ is in direct analogy with the continuum multisymplectic form formula $d\Theta_U(\phi)\cdot (V,W) = 0$, if we write the discrete formula using the definition of the discrete Cartan form, we see that there is an additional contribution corresponding to the integration over elements $T \in \mathcal{T}[\partial U]$. Although we will not write this out explicitly, we see that this additional contribution involves a sum-integral of the form $\sum_{T \in \mathcal{T}[\partial U]} \int_T$, which is $O(h)$ as discussed previously. As such, we only need control of the residual associated to the linearized equations to formally show convergence of the discrete multisymplectic form formula to the continuum multisymplectic form formula.

We note that the aforementioned convergence is formal since it must also be combined appropriately with convergence of the discrete solution to a continuum weak solution using bounds on the projection. One possible method for combining these is the following observation. Since, by assumption, the projections are cochain projections, we have that
$$ d^2S_h[\phi] \cdot (V,W) = d^2(\pi_h^*S)[\phi]  \cdot (V,W) = d^2S[\pi_h\phi] \cdot (T\pi_h \cdot V, T\pi_h \cdot W). $$
In particular, for first variations $V,W \in \mathfrak{X}(Y)$ for the degenerate action, $T\pi_h \cdot V, T\pi_h\cdot W$ correspond to first variations of the discrete Euler--Lagrange equations, and the discrete multisymplectic form formula can be reinterpreted as the multisymplectic form formula for the degenerate action. Note also that for cochain projections, a simple calculation shows that $j^1( T\pi_h \cdot V) = T(\pi_h^k \times \pi_h^{k+1}) \cdot j^1V$, so that the terms in the integrand of the discrete multisymplectic form formula, (\ref{Discrete Multisymplectic Form Formula 1}), are in the image of the (tangent) projections. This allows us to formulate the discrete multisymplectic formula in terms of the projection and its tangent lift, and hence more directly determine in what sense the discrete multisymplectic form formula converges as $h \rightarrow 0$.

Of course, without specifying a particular field theory and finite element spaces, we cannot proceed further to show convergence. We aim to investigate more rigorous convergence results for particular field theories in future work. See also the discussion below regarding convergence of the discrete Noether theorem to its continuum analogue. 

\end{remark}

\begin{remark}
As noted before, the discrete Cartan form, in the case of nodal interpolating shape functions, gives precisely the discrete notion of Cartan form introduced in \citet{MaPaSh1998}. In this case, our discrete multisymplectic form formula $d\mathbf{\Theta}^h_U(\phi_h)\cdot (V,W) = 0$ (for first variations $V,W$) gives precisely the discrete multisymplectic form formula derived in \citet{MaPaSh1998}.
\end{remark}

\subsection{Discrete Noether's Theorem}\label{Noether's Theorem Section}
In this section, we establish a discrete analogue of the weak Noether's theorem as discussed in Section \ref{Weak Lagrangian Field Theory Section}.

To derive a discrete analogue, we first must restrict to regular regions instead of allowing arbitrary regions, analogous to the discussion of the discrete multisymplectic form formula. Furthermore, we must make sense of a group action on the discrete space $\Lambda^k_h \subset H\Lambda^k$. In general, one cannot expect the group action $G \times H\Lambda^k \rightarrow H\Lambda^k$ to restrict to a group action $G \times \Lambda^k_h \rightarrow \Lambda^k_h$, i.e., the group orbit $G \cdot \Lambda^k_h$ is not necessarily contained in $\Lambda^k_h$. However, suppose there exists a Lie group homomorphism $\psi_h: G \rightarrow G $ such that
$$ \psi_h(g)\cdot \pi_h\phi = \pi_h (g \cdot \phi), \text{ for all } g \in G, \phi \in H\Lambda^k. $$
In such a case, we say that the projection $\pi_h$ is $G$-equivariant with intertwining homomorphism $\psi_h$. 
\begin{remark}
In essence, the motivation behind this definition is that when one discretizes a theory, a symmetry group of the original theory may be reduced to a smaller subgroup. This is encoded in the homomorphism $\psi_h$, where the smaller subgroup is $\psi_h(G) \cong G/\ker{\psi_h}$. We will see some examples of this after proving a discrete Noether's theorem.
\end{remark}

We are now ready to state a discrete Noether's theorem.

\begin{theorem}
Let $U$ be a regular region. Suppose the action $S$ is $G$-invariant and the projection is $G$-equivariant with intertwining homomorphism $\psi_h$. Then, for a solution $\phi_h \in \Lambda^k_h$ of the DEL,
\begin{subequations}
\begin{equation}\label{Discrete Noether Theorem 1a}
\mathbf{\Theta}^h_U(\phi_h) \cdot T\pi_h(\widetilde{\xi}) = 0,
\end{equation}
or, equivalently,
\begin{equation}\label{Discrete Noether Theorem 1b}
\mathbf{\Theta}^h_U(\phi_h) \cdot \widetilde{(\psi_h)_*\xi} = 0, 
\end{equation}
\end{subequations}
where $(\psi_h)_*$ is the induced Lie algebra homomorphism.
\begin{proof}
Since $\phi_h \in \Lambda^k_h$ and $\pi_h: H\Lambda^k \rightarrow \Lambda^k_h$ is surjective, there exists some $\phi \in H\Lambda^k$ such that $\phi_h = \pi_h\phi$. Then, for any $g \in G$, 
$$ S_U[\pi_h(g \cdot \phi)] = S_U[ \psi_h(g) \cdot \pi_h\phi] = S_U[\pi_h\phi], $$
where $G$-equivariance of the projection was used in the first equality and $G$-invariance of the action was used in the second equality. The above holds for all $g$ and in particular, for $\xi \in \text{Lie}(G)$, one has that
$$ S_U \circ \pi_h [e^{t\xi} \cdot \phi] = S_U \circ \pi_h[\phi]. $$
Differentiating the above yields
$$ 0 = d(S_U \circ \pi_h)[\phi] \cdot \widetilde{\xi} = \pi_h^* dS_U[\phi]\cdot \widetilde{\xi} = dS_U[\pi_h\phi]\cdot T\pi_h \widetilde{\xi} = dS_U[\phi_h]\cdot T\pi_h \widetilde{\xi}.$$
Finally, we decompose the variation of the action as
$$ 0 = dS_U[\phi_h]\cdot T\pi_h \widetilde{\xi} = \textbf{EL}^h_U(\phi_h) \cdot T\pi_h \widetilde{\xi} + \mathbf{\Theta}^h_U(\phi_h)\cdot T\pi_h \widetilde{\xi}.$$
The term $\textbf{EL}^h_U(\phi_h) \cdot T\pi_h \widetilde{\xi}$ vanishes since $\phi_h$ satisfies the DEL and $T\pi_h \widetilde{\xi}$ is a vector field on $\Lambda^k_h$. Thus, equation \eqref{Discrete Noether Theorem 1a} follows. 

To see that this is equivalent to equation \eqref{Discrete Noether Theorem 1b}, it suffices to show $T\pi_h \widetilde{\xi}(\phi) = \widetilde{ (\psi_h)_*\xi} (\phi_h)$. To see this, recall that
$$ \widetilde{\xi}(\phi) = \lim_{t\rightarrow 0} \frac{e^{t\xi} \cdot \phi - \phi}{t}. $$
Thus, the pushforward can be computed as
\begin{align*}
T\pi_h \widetilde{\xi}(\phi) &= \lim_{t\rightarrow 0} \frac{\pi_h (e^{t\xi} \cdot \phi) - \pi_h \phi}{t} \\
&= \lim_{t\rightarrow 0} \frac{\psi_h(e^{t\xi}) \cdot \pi_h\phi - \pi_h\phi}{t} \\
&= \widetilde{(\psi_h)_*\xi},
\end{align*}
where $G$-equivariance was used in the second equality and the third equality is simply the definition of an infinitesimal generator. 
\end{proof}
\end{theorem}

\begin{remark}
Note that the proof above is still valid if one weakens the notion of $G$-equivariance to only hold infinitesimally up to $o(t)$, i.e.,
$$ \psi_h(e^{t\xi}) \cdot \pi_h\phi = \pi_h(e^{t\xi}\cdot \phi) + o(t), \text{ for all } \xi \in \text{Lie}(G), \phi \in H\Lambda^k. $$
\end{remark}

We give two simple examples of group-equivariant cochain projections and subsequently remark on how one might construct more general group-equivariant cochain projections.

\begin{example}[Global Linear Group Action]
First, note that although we took our field configuration bundle to be $\Lambda^k(X)$, we could have more generally taken our fields to be vector-valued forms, corresponding to the bundle $\Lambda^k(X) \otimes V$ for some finite-dimensional vector space $V$. With a basis $\{e_i\}$ for $V$, the only modification to the discrete Euler--Lagrange (\ref{DEL 1b}) equation is that there are $\dim(V)$ equations corresponding to each component of the field $\phi^i \in \Lambda^k(X)$ in the expansion $\phi(x) = \sum_i \phi_i(x) \otimes e_i$.

Suppose that a Lagrangian with such a configuration bundle is invariant under the global action by a group representation $D: G \rightarrow GL(V)$. That is, $D$ acts on $\phi \in \Lambda^k(X)\otimes V$ as $1_{\Lambda^k(X)} \otimes D$:
$$ D(g)\phi(x) = \sum_i\phi_i(x) \otimes ( D(g)e_i ), $$
where $D(g)$ is independent of $x$.

Let $\pi^k_h: H\Lambda^k \rightarrow \Lambda^k_h$ and $\pi^{k+1}_h: H\Lambda^{k+1} \rightarrow \Lambda^k_h$ be cochain projections, i.e., they satisfy $\pi^{k+1}_hd = d\pi^k_h$. We can extend these to cochain projections on vector-valued forms by $\tilde{\pi}_h = \pi_h \otimes 1_V$. Furthermore, group-equivariance follows from linearity of the group action and the above definitions,
\begin{align*}
D(g) \tilde{\pi}_h \phi &= D(g) \tilde{\pi}_h \left(\sum_i \phi_i \otimes e_i\right) = D(g) \sum_i \pi_h(\phi_i)\otimes e_i = \sum_i \pi_h(\phi_i) \otimes D(g)e_i \\
&= \tilde{\pi}_h \left( \sum_i \phi_i \otimes D(g) e_i \right) = \tilde{\pi}_h \left( D(g) \sum_i \phi_i \otimes e_i \right) = \tilde{\pi}_h D(g)\phi.
\end{align*}
Thus, the discrete Noether's theorem holds in this case, where the intertwining homomorphism is just the identity.

A simple example of such a theory is the Schr\"{o}dinger equation with $V = \mathbb{C}$, $G = U(1)$, and the group representation given by the fundamental representation of $U(1)$ in $GL(\mathbb{C})$. The corresponding Noether conservation law is conservation of mass in the $L^2$ norm.
\end{example}

\begin{example}[Yang--Mills Theory]
As an example of a non-global (but still linear) group action, consider Yang--Mills theories with a structure group $G$. In this setting, the field $A \in \Lambda^1(X) \otimes \mathfrak{g}$, i.e., $A$ is valued in the Lie algebra $\mathfrak{g}$ associated to $G$. More precisely, the field is valued in the adjoint representation of the Lie algebra. This class of theories is invariant under the linear action of $\Lambda^0(X) \otimes \mathfrak{g}$, viewed as a group under addition, on $\Lambda^1(X) \otimes \mathfrak{g}$ given by
$$ \alpha \cdot A \equiv A + d\alpha, $$
for any $\alpha \in \Lambda^0(X)\otimes \mathfrak{g}$. Unlike the previous example, this action is local in the sense that $D(\alpha)$ depends on the position in spacetime. 

Now, suppose that we have cochain projections for the sequence $H\Lambda^0 \overset{d}{\rightarrow} H\Lambda^1 \overset{d}{\rightarrow} H\Lambda^2$, i.e., $\pi^2_hd = d\pi^1_h, \pi^1_hd = d\pi^0_h$. Extend these to projections $\tilde{\pi}_h$ on $H\Lambda \otimes \mathfrak{g}$ as in the previous example. The relation $\tilde{\pi}^2_hd = d\tilde{\pi}^1_h$ is required for naturality of the variational structure. On the other hand, the relation $\tilde{\pi}^1_hd = d\tilde{\pi}^0_h$ gives group equivariance in the following sense,
$$ \tilde{\pi}^1_h( \alpha \cdot A ) = \tilde{\pi}^1_h(A + d\alpha) = \tilde{\pi}^1_hA + \tilde{\pi}^1_h d\alpha = \tilde{\pi}^1_hA + d\tilde{\pi}^0_h \alpha = \tilde{\pi}^0_h (\alpha) \cdot \tilde{\pi}^1_h A. $$
Thus, the discrete Noether's theorem holds where the intertwining homomorphism is $\psi_h = \tilde{\pi}^0_h$.

In the continuum Hilbert space setting, the associated conservation law is the weak Gauss' law, where Gauss' law holds tested against any element of the Hilbert space. In the discrete setting, the discrete Noether's theorem gives a discrete Gauss' law, where Gauss' law holds tested against any element of the finite-dimensional subspace.  

\end{example}

The previous two examples were simple in the sense that they had a linear or global group action. Although the second example was local, the acting group is contained in the Hilbert complex of forms and group-equivariance arose from having cochain projections. 

To construct group-equivariant cochain projections for more general actions, one possible method would be to utilize group-equivariant interpolation \citep{GaLe2018,Le2019} in constructing the projection. One method to construct cochain projections from interpolants is to place an intermediate sequence between the sequence of Hilbert spaces and the sequence of finite-dimensional subspaces,
\[\begin{tikzcd}
	{H\Lambda^k} && {H\Lambda^{k+1}} \\
	{} \\
	{C^k} && {C^{k+1}} \\
	\\
	{\Lambda^k_h} && {\Lambda^{k+1}_h}
	\arrow["{\sigma^k}"', from=1-1, to=3-1]
	\arrow["d", from=1-1, to=1-3]
	\arrow["{\sigma^{k+1}}"', from=1-3, to=3-3]
	\arrow["{D}", from=3-1, to=3-3]
	\arrow["{\mathcal{I}^k}"', from=3-1, to=5-1]
	\arrow["d", from=5-1, to=5-3]
	\arrow["{\mathcal{I}^{k+1}}"', from=3-3, to=5-3],
\end{tikzcd}\]
where $\{\sigma^m\}$ are the degrees of freedom mapping into the coefficient spaces $\{C^m\}$, $\{\mathcal{I}^m\}$ are interpolants from the coefficient spaces into the finite-dimensional subspaces, $D$ realizes $d$ in the coefficient space, and the projections are defined by $\pi_h = \mathcal{I} \circ \sigma$. The degrees of freedom must be unisolvent when restricted to the image of the interpolants. Constructing cochain projections amounts to ensuring that the top diagram commutes. Then, fixing group-equivariant interpolants $\mathcal{I}^k, \mathcal{I}^{k+1}$, group-equivariant cochain projections could be achieved by choosing the degrees of freedom such that they are unisolvent for this choice of interpolants and ensuring that the top diagram commutes. We will pursue such a construction in future work.

\subsection{A Discrete Variational Complex}\label{Variational Complex Section}
The variational bicomplex is a double complex on the spaces of differential forms over the jet bundle of a configuration bundle used to study the variational structures of Lagrangian field theories defined on this bundle (see, for example, \citet{An1992}). The differential forms arising in Lagrangian field theory, such as the Lagrangian density, the Cartan form, and the multisymplectic form, can be interpreted as elements of this variational bicomplex. The cochain maps in this double complex are the horizontal and vertical exterior derivatives on the jet bundle, which give a geometric interpretation to the variations encountered in Lagrangian field theories. The variational bicomplex has also been extended to problems with symmetry in \citet{KoOl2003}, and to the discrete setting for difference equations corresponding to discretizing Lagrangian field theories on a lattice in \citet{HyMa2004}.

In this section, we interpret and summarize the results from the previous sections in terms of a discrete variational complex which arises naturally in our discrete construction and, in a sense, resembles the vertical direction of the variational bicomplex. 

In our previous discussion, we saw a complex which arises from the space of discrete forms,
$$ \Lambda^0_h \overset{d}{\longrightarrow} \Lambda^1_h \overset{d}{\longrightarrow} \cdots \overset{d}{\longrightarrow} \Lambda^{n}_h \overset{d}{\longrightarrow} \Lambda^{n+1}_h, $$
which forms a complex due to the cochain projection property. Now, consider instead the following ``vertical" complex; consider the spaces of smooth forms on $\Lambda^k_h$, which we denote $\Omega (\Lambda^k_h)$, with the ``vertical" exterior derivative $d_v: \Omega^m(\Lambda^k_h) \rightarrow \Omega^{m+1}(\Lambda^k_h)$ being the usual exterior derivative over the base manifold $\Lambda^k_h$ (which is a vector space). This gives a discrete variational complex:
\begin{figure}[h]\label{Discrete Variational Complex}
\[\begin{tikzcd}
	{\Omega^{\dim(\Lambda^k_h)}(\Lambda^k_h)} \\
	\vdots \\
	{\Omega^1(\Lambda^k_h)} \\
	{\Omega^0(\Lambda^k_h)\ .}
	\arrow["{d_v}", from=4-1, to=3-1]
	\arrow["{d_v}", from=3-1, to=2-1]
	\arrow["{d_v}", from=2-1, to=1-1]
\end{tikzcd}\]
\end{figure}

Note that in the previous sections, we used $d$ to denote both the exterior derivative corresponding to the de Rham complex and the vertical exterior derivative, e.g., the multisymplectic form formula $d\mathbf{\Theta}^h_U(V,W) = 0$ is more precisely $d_v\mathbf{\Theta}^h_U(V,W) = 0$, where it was understood which was meant by the spaces where the relevant quantities were defined. However, we will distinguish the two in this section to be more precise. We call the above a vertical complex for two reasons: first, the vertical exterior derivative corresponds to differentiation with respect to the fiber values as we will see below. Furthermore, it resembles the vertical direction of the variational bicomplex. However, in our construction, there is no horizontal direction, since in the discrete setting, we are considering transgressed forms, i.e., forms integrated over a region, so the horizontal direction collapses.  

Examples of forms in the discrete variational complex include the restricted action $S \in \Omega^0(\Lambda^k_h)$, the discrete weak Cartan form $\mathbf{\Theta}^h \in \Omega^1(\Lambda^k_h)$, and the discrete multisymplectic form $d_v\mathbf{\Theta}^h \in \Omega^2(\Lambda^k_h)$. Let $\{v_i\}$ be a basis for $\Lambda^k_h$; we then coordinatize the vector space $\Lambda^k_h$ by the components of the expansion of any $\phi = \sum_i \phi^i v_i \in \Lambda^k_h$, which we denote as a vector $(\phi^i) = (\phi^0,\dots,\phi^{\dim(\Lambda^k_h)}) \in \Lambda^k_h$. For example, the vertical exterior derivative of the action is
$$ d_v S[\phi] = \sum_j \frac{\partial S[(\phi^i)]}{\partial \phi^j} d_v\phi^j. $$
The naturality of the variational principle and the interpretation of the weak Euler--Lagrange equations as a Galerkin variational integrator, discussed in Section \ref{Variational Structure Section}, relate the vertical exterior derivative of $S$ to the variation of the degenerate action $S_h$. Now, let $\Pi_i$ be the projection onto the $i^{th}$ coordinate $\phi^i$ and let $\mathcal{I}[\partial U]$ denote the set of indices $i$ such that $v_i$ has nonvanishing trace on $\partial U$. Then, for $v = (v^i) \in \Lambda^k_h$, we have that
\begin{align*}
v_{\partial} &= \sum_{i \in \mathcal{I}[\partial U]}\Pi_i (v), \\
v_{in} &= v - v_{\partial} = \sum_{i \not\in \mathcal{I}[\partial U]} \Pi_i (v).
\end{align*}
Recall that we can view vector fields $V \in \mathfrak{X}(\Lambda^k_h)$ as maps $V: \Lambda^k_h \rightarrow \Lambda^k_h$, and we extend this to the vector fields $V_{\partial}(\phi) \equiv (V(\phi))_{\partial}$ and $V_{in}(\phi) \equiv (V(\phi))_{in}$. In particular, the discrete weak Cartan form in this notation is given by
$$ \mathbf{\Theta}^h(\phi)\cdot V = d_vS[\phi]\cdot V_{\partial}. $$
The variation of the action can then be expressed as
$$ d_vS[\phi]\cdot V = \textbf{EL}^h(\phi)\cdot V + \mathbf{\Theta}^h(\phi)\cdot V. $$
More explicitly, these can be expressed as
\begin{align*}
\mathbf{\Theta}^h(\phi) &=  \sum_{j \in \mathcal{I}[\partial U]} \frac{\partial S[(\phi^i)]}{\partial \phi^j} d_v\phi^j, \\
\textbf{EL}^h(\phi) &=  \sum_{j \not\in \mathcal{I}[\partial U]} \frac{\partial S[(\phi^i)]}{\partial \phi^j} d_v\phi^j.
\end{align*}
In particular, the discrete Euler--Lagrange equations are given by the null Euler--Lagrange condition, $\textbf{EL}(\phi) = 0$, i.e., $\textbf{EL}(\phi)\cdot V = 0$ for all $V$. Assuming a solution $\phi$ of the null Euler--Lagrange condition, we immediately see that
$$ d_vS[\phi]\cdot V = \mathbf{\Theta}^h(\phi)\cdot V, $$
and in particular, for a symmetry of the action $d_vS[\phi]\cdot \tilde{\xi} = 0$, we have the discrete Noether's theorem $\mathbf{\Theta}^h(\phi)\cdot \tilde{\xi} = 0$. By taking the second exterior derivative of the action, we have that
$$ 0 = d_v^2 S[\phi] = d_v \textbf{EL}(\phi) + d_v \mathbf{\Theta}^h(\phi). $$
The space of first variations at $\phi$ is precisely the kernel of the quadratic form $d_v \textbf{EL}(\phi)$, so this gives the discrete multisymplectic form formula $d_v \mathbf{\Theta}^h(\phi)(\cdot,\cdot) = 0$ when evaluated on first variations. Thus, the results of the previous sections can be concisely summarized in terms of the structure given by the discrete variational complex. 

Furthermore, this framework also encompasses the discrete variational principle with quadrature, as discussed in Remark \ref{Remark on Quadrature}. Namely, from the discrete viewpoint, a discrete action is an element of $\Omega^0(\Lambda^k_h)$ and in particular, the discrete action with quadrature $\mathbb{S}$ from (\ref{Discrete Action with Quadrature}) is an element of $\Omega^0(\Lambda^k_h)$. Then, the variation of $\mathbb{S}$ can be decomposed into interior and boundary one-forms as before,
\begin{align*}
d_v \mathbb{S}[(\phi)] &= \mathbb{EL}(\phi) + \Uptheta^h(\phi), \\
\Uptheta^h(\phi) &=  \sum_{j \in \mathcal{I}[\partial U]} \frac{\partial \mathbb{S}[(\phi^i)]}{\partial \phi^j} d_v\phi^j, \\
\mathbb{EL}(\phi) &=  \sum_{j \not\in \mathcal{I}[\partial U]} \frac{\partial \mathbb{S}[(\phi^i)]}{\partial \phi^j} d_v\phi^j.
\end{align*}
The discrete Euler--Lagrange equations with quadrature are given by the null Euler--Lagrange condition $\mathbb{EL}(\phi) = 0$, and subsequently, the discrete Noether's theorem and discrete multisymplectic form formula (in the case of quadrature) then follow analogously to before, where symmetries are with respect to $\mathbb{S}$ and the space of first variations at $\phi$ is the kernel of the quadratic form $d_v\mathbb{EL}(\phi)$. 
\subsection{Numerical Example}\label{Numerical Example Section}
We consider the scalar Poisson equation in $(1+1)$-spacetime dimensions on a rectangular domain, $X = [a,b] \times [c,d]$,
$$ \partial_t^2 \phi + \partial_x^2\phi = f(x,y). $$
The Lagrangian is given by $L = \frac{1}{2} (\partial_t\phi)^2 + \epsilon \frac{1}{2}(\partial_x\phi)^2  + f(x,y)\phi$, or equivalently, the Lagrangian density is given by
$$ \mathcal{L} = \frac{1}{2} d\phi\wedge\star d\phi + f \wedge * \phi. $$
Compute $\partial_3\mathcal{L}(j^1_d\phi) = d\phi$, $\partial_2\mathcal{L}(j^1_d\phi) = f$, where we assume $f \in L^2\Lambda^0$, so the discrete Euler--Lagrange equation reads: find $\phi \in \Lambda^0_h$ such that
$$ (d\phi, dv)_{L^2} = (f,v)_{L^2}, \text{ for all } v \in \mathring{\Lambda}^0_h. $$
We subdivide $X$ into a regular rectangular mesh and use a tensor-product basis of hat functions $\psi_{ij}(t,x) = \chi_i(t) \xi_j(x) $ subordinate to this mesh.

Expressing $\phi = \phi^{ij} \psi_{ij}$ and taking $v = \psi_{mn}$, the above equation reads as
$$ \sum_{ij \in [mn]} \Big( \phi^{ij} (\chi_i'(t), \chi_m'(t))_{L^2} (\xi_j(x),\xi_n(x))_{L^2} + \phi^{ij}(\chi_i(t), \chi_m(t))_{L^2} (\xi_j'(x),\xi_n'(x))_{L^2}\Big) = (f, \psi_{mn})_{L^2}. $$
Since $[m] = \{m-1,m,m+1\},$ this gives a nine-point stencil on the interior elements of the mesh. Explicitly, we compute the stiffness and mass matrix elements
\begin{align*}
\{(\chi_i'(t),\chi_m'(t))_{L^2}\}_{i \in [m]} &= \frac{1}{\Delta t} \{-1,2,-1\}, \\
\{(\chi_i(t),\chi_m(t))_{L^2}\}_{i \in [m]} &= \Delta t \left\{ \frac{1}{6}, \frac{2}{3}, \frac{1}{6} \right\},
\end{align*}
and similarly for the $x$ direction. This gives 
$$ \frac{ \phi^{m+1 \tilde{n}} - 2 \phi^{m \tilde{n}} + \phi^{m-1 \tilde{n}}}{\Delta t^2} + \frac{\phi^{\tilde{m} n+1} - 2 \phi^{\tilde{m} n} + \phi^{\tilde{m} n-1} }{\Delta x^2} + \frac{1}{\Delta t \Delta x} (f,\psi_{mn}), $$
where $\phi^{m \tilde{n}} = \frac{1}{6}(\phi^{m n+1} +4 \phi^{mn} + \phi^{mn-1})$ and $\phi^{\tilde{m} n} = \frac{1}{6}(\phi^{m+1 n} +4 \phi^{mn} + \phi^{m-1 n})$. Noting that $(N'(\phi),\psi_{mn}) = \delta\mathcal{N}/\delta \phi^{mn}$, where $\mathcal{N} = \int N(\phi) dt\wedge dx$, this reproduces the nine-point variational integrator derived by \citet{Ch2008}. As was shown in \citet{Ch2008}, using mid-point quadrature, this method reduces to the multisymplectic integrator derived by \citet{MaPaSh1998}. 

Now, we consider the discrete Cartan form for this example. Consider a regular region $U \subset X$; for simplicity, we take $U$ to be a rectangular region $U = [t_0, t_M] \times [x_0, x_N]$, without loss of generality, since any regular region on a rectangular mesh is a union of such rectangular regular regions, where the vertices of $U$ are given by $\{(t_i,x_j)\}_{i,j=0}^{M,N}$ where $t_i = t_0 + i \Delta t, x_j = x_0 + j \Delta x$. We index the piecewise linear nodal interpolating shape functions $\psi_{ij}(t,x) = \chi_i(t)\xi_j(x)$ by the node $(t_i,x_j)$ which it interpolates, i.e., $\psi_{ij}(t_k,x_l)=\chi_i(t_k)\xi_j(x_l) = \delta_{ik}\delta_{jl}$. Let 
$$ \phi_h = \sum_{i,j=0}^{M,N}\phi^{ij}_h \psi_{ij} $$
be a solution of the associated discrete Euler--Lagrange equation, restricted to $U$.  

Recall the definition of the discrete weak Cartan form as the variation of the action by $w \in \Lambda^0_h(U)$, with generally nonvanishing trace on $\partial U$. Letting $w = w_{in} + w_{\partial} \in \Lambda^0_h(U)$ and $W \in \mathfrak{X}(\Lambda^0_h)$ such that $W(\phi_h) = w$, we have $\delta S_U[\phi_h]\cdot w_{in} = 0$ and hence,
\begin{align}\label{Discrete Cartan Form for Wave Eq}
 \Theta^h_U(\phi_h)\cdot W &= \delta S_U[\phi_h]\cdot w = \delta S_U[\phi_h]\cdot (w - w_{in}) = \delta S_U[\phi_h] \cdot w_{\partial} \\
 &= \sum_{T \in \mathcal{T}[\partial U]} \int_T d\phi_h \wedge * dw_\partial. \nonumber
\end{align}
As discussed above, in the case where the degrees of freedom are the nodal values and the finite-dimensional function space is given by nodal interpolating shape functions, the discrete weak Cartan form reproduces the discrete Cartan form in \citet{MaPaSh1998} and \citet{Ch2008}. However, we will now explicitly show this for this example. We express the action as a function of the components $\phi_h^{ij}$:
$$ S_U[\{\phi^{ij}_h\}] = \int_U [d\phi_h \wedge * d\phi_h - N(\phi_h) dt \wedge dx] = \int_U \left[ \frac{1}{2} \sum_{i,j}\sum_{k,l}\phi^{ij}_h\phi^{kl}_h d\psi_{ij} \wedge * d \psi_{kl} \right]. $$
Let $ij \in \mathcal{I}[\partial U]$, i.e., the index corresponds to a node on $\partial U$, consisting of indices $ij$ such that either $i = 0 \text{ or } M$ or $j = 0 \text{ or } N$. \citet{MaPaSh1998} and \citet{Ch2008} define the discrete Cartan form associated to this node to be
\begin{equation} \label{MPS Discrete Cartan for Wave Eq}
\frac{\partial S_U[\{\phi_h^{kl}\}]}{\partial \phi_h^{ij}} d\phi^{ij}_h,
\end{equation}
where $d$ is the vertical exterior derivative along the fiber and not the exterior derivative on the base space. Compute
$$ \frac{\partial S_U[\{\phi^{kl}_h\}]}{\partial \phi_h^{ij}} = \int_U \left[ \sum_{k,l}\phi^{kl}_h d\psi_{ij}\wedge * d\psi_{kl} \right]. $$
With coordinates $\phi_h^{ij}$ on $\Lambda^0_h$, we can express the vector field $W = \sum_{k,l} W^{kl} \partial/\partial \phi_h^{kl}$ and hence $W^{kl}(\phi_h) = w^{kl}$. Pairing (\ref{MPS Discrete Cartan for Wave Eq}) with $W$ and summing over all $ij \in \mathcal{I}[\partial U]$, we see that this gives (\ref{Discrete Cartan Form for Wave Eq}), since $w_\partial = \sum_{ij \in \mathcal{I}[\partial U]} w^{ij} \psi_{ij}$ and $\psi_{ij}$ for $ij \in \mathcal{I}[\partial U]$ are supported on $\cup_{ T \in \mathcal{T}[\partial U]}T$.

Finally, we now discuss in what sense the discrete weak Cartan form for this example converges to the weak Cartan form. Consider a node $ij \in \mathcal{I}[\partial U]$ along, say, the $\{t = t_0\}$ edge of $\partial U$, so that $i = 0$. We compute part of the discrete Cartan form for a boundary variation $w^{0j}$ associated to this node. Namely, we compute the part associated to the derivative in the $t$ direction, since this is the normal direction along this edge. This is given by 
\begin{align*}
 \int_U \sum_{k,l} \phi_h^{kl} \chi'_k(t) \xi_l(x) w^{0j} \chi'_0(t) \xi_j(x) dt\wedge dx &=  \int_U \sum_{k=0}^1 \sum_{l=j-1}^{j+1} \phi_h^{kl} \chi'_k(t) \xi_l(x) w^{0j} \chi'_0(t) \xi_j(x) dt\wedge dx  \\
 &= \sum_{l=j-1}^{j+1}\frac{\phi_h^{0l} - \phi_h^{1l}}{\Delta t} (\xi_l,\xi_j)_{L^2} w^{0j}.
\end{align*}
Since $(\xi_l,\xi_j)_{L^2}$ for $l=j-1,j,j+1$ has total mass $\Delta x$, this formally converges to $\int \frac{\partial \phi}{\partial n} w \,dx$, where we note that the normal vector on this edge is $-\hat{t}$. Repeating this over all nodes on $\partial U$, the discrete Cartan form formally converges to
$$ \int_{\partial U} \frac{\partial \phi}{\partial n} w\, dl, $$
where $dl$ is the codimension one measure on $\partial U$, which is the weak Cartan form for a solution $\phi$ of the weak Euler--Lagrange equation.  

To be more rigorous about the convergence of the discrete weak Cartan form to the weak Cartan form, we have the bounds
\begin{align*}
\left|\Theta^h_U(\phi_h)\cdot\pi_hv - \Theta_U(\phi)\cdot v\right| &\leq \left|\left(\Theta^h_U(\phi_h) - \Theta^h_U(\pi_h\phi)\right) \cdot \pi_hv\right| \\ &\quad + \Big|\Theta_U(\phi)\cdot (v-\pi_hv)\Big| + \left|\left(\Theta_U(\phi)-\Theta^h_U(\pi_h\phi)\right)\cdot\pi_hv\right| \\
&\leq C |\phi_h - \pi_h\phi|_{H^1} \|\pi_hv\|_{H^1} + C |\phi_h|_{H^1} \|v-\pi_h\phi\|_{H^1} + C|\phi-\pi_h\phi|_{H^1} \|\pi_hv\|_{H^1} \\
&\leq h C(\phi,f),
\end{align*}
where $C(\phi,f)$ is independent of $h$, and we have applied standard estimates for piecewise-linear elements applied to the Poisson equation (see, e.g., \citet{LaTh2003}). Thus, we expect linear convergence of the discrete weak Cartan form to the weak Cartan form.

As a numerical example, we take $U = X = [0,1] \times [0,1]$ with $f(x,y) = -\pi^2 \sin(\pi x) - \pi^2 \sin(\pi y)$, $v(x,y) = e^x + e^y$, and $\Delta t = \Delta x = h$ for various values of $h$. Since we have the analytic solution $\phi(x,y) = \sin(\pi x) + \sin(\pi y)$, we can directly compute the error $\text{E}(h) = \left|\Theta^h_U(\phi_h)\cdot\pi_hv - \Theta_U(\phi)\cdot v\right|$. The linear convergence, i.e.,
$$ \left|\Theta^h_U(\phi_h)\cdot\pi_hv - \Theta_U(\phi)\cdot v\right| \leq \mathcal{O}(h),$$
is shown in Figure \ref{Cartan Error}.

\begin{figure}[h!] 
\includegraphics[scale=0.7]{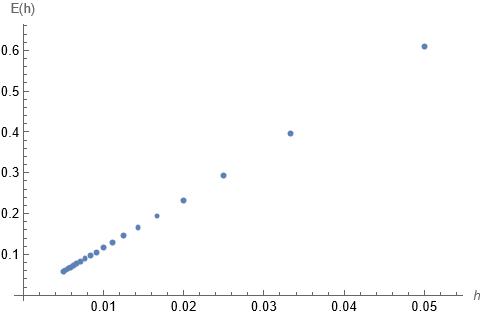}
  \caption{Linear Convergence of the discrete weak Cartan form to the weak Cartan form.}\label{Cartan Error}
\end{figure}

\section{Canonical Semi-discretization of Lagrangian Field Theories}\label{Semi-Discretization Section}
Turning now to the canonical formalism of field theories, we assume that our $(n+1)$-dimensional spacetime $X$ is globally hyperbolic, i.e., $X$ contains a smooth Cauchy hypersurface $\Sigma$ such that every infinite causal curve intersects $\Sigma$ exactly once. It was shown in \citet{BeSa2003} that a globally hyperbolic spacetime is diffeomorphic to the product, $X \cong \mathbb{R} \times \Sigma$. Identifying $X$ with the product, we have a slicing of the spacetime. Taking an interval $I \subseteq \mathbb{R}$, we have the spacelike embeddings
$$i_t: \Sigma \rightarrow X,$$
for each $t \in I$, such that the images $\{ \Sigma_t := i_t(\Sigma) \}_{t\in I}$ form a foliation of $X$. 

We will assume our Lagrangian depends on time-dependent fields as $\mathcal{L}(x^\mu,\varphi,\dot{\varphi},d\varphi)$, where the field $\varphi(t) \in H\Lambda^k(\Sigma_t)$, denoted by $\varphi$ as opposed to the full field $\phi$, and the exterior derivative acts on $\Lambda^k(\Sigma_t)$ for each $t$. 

\begin{remark}\label{Compare Covariant and Canonical Remark}
There is a slight subtlety here when comparing to the covariant theory on the full spacetime $X$. In the covariant theory, we consider $k$-forms on $X$, $\Lambda^kX$, whereas here we are considering $k$-forms on $\Sigma$, $\Lambda^k(\Sigma)$. Letting $\pi_1: \mathbb{R} \times \Sigma \rightarrow \mathbb{R}, \pi_2: \mathbb{R} \times \Sigma \rightarrow \Sigma$ be the projections, we have pointwise,
$$ \wedge^k(T^*X) = \wedge^kT^*(\mathbb{R}\times\Sigma) \cong \Big(\pi_1^*(\wedge^0T^*\mathbb{R}) \wedge \pi_2^*(\wedge^kT^*\Sigma)\Big) \oplus \Big( \pi_1^*(\wedge^1T^*\mathbb{R}) \wedge \pi_2^*(\wedge^{k-1}T^*\Sigma) \Big). $$
This congruence does not hold at the level of sections: to see this in coordinates $(t,x)$ on $\mathbb{R} \times \Sigma$, we have forms which look like $f(t) g(x) dx^{j_1}\wedge\dots\wedge dx^{j_{k}}, f(t) dt \wedge g(x) dx^{j_1}\wedge\dots\wedge dx^{j_{k-1}}$ which cannot give a form which looks like, e.g., $h(t,x) dt \wedge dx^{j_1}\wedge\dots\wedge dx^{j_{k-1}}$ where $h$ is some function that cannot be expressed as a product $f(t)g(x)$. However, we are assuming time-dependent fields $\varphi: t \mapsto  H\Lambda^k(\Sigma)$ so we do have the forms which look like $\varphi(t) = g(t,x)dx^{j_1}\wedge\dots\wedge dx^{j_k}$. Thus, we only need to consider multiple fields to obtain full generality $\varphi_1: t \mapsto H\Lambda^{k}(\Sigma), \varphi_2: t \mapsto H\Lambda^{k-1}(\Sigma)$. Here, we are identifying $\Lambda^1(I) \cong \Lambda^0(I)$, so by $\varphi_2(t)$ we really mean $\varphi_2(t)dt$. Of course, this issue does not arise for scalar functions; however, for $k>0$, one needs to consider multiple fields. 

To be more precise regarding this decomposition, consider first the case $k=0$. Since the exterior derivative on scalar functions on $I \times \Sigma$ splits into $d = d_t + d_\Sigma$, where, in terms of vector field proxies, $d_t = \partial_t, d_\Sigma = \nabla_\Sigma$, one does not need to consider multiple fields. In this case, one has
$$ H\Lambda^0(I \times \Sigma) = H\Lambda^0(I, L^2(\Sigma)) \cap L^2(I, H\Lambda^0(\Sigma)). $$

For the case $k>0$, we first begin with a formal calculation. For any $k$-form  $\phi$ on $I \times \Sigma$, we can express $\phi$ as
$$ \phi = \underbrace{\sum_{I \in I^{k-1}_t} \psi_I(t,x) dt \wedge dx^I}_{\equiv \psi} + \underbrace{\sum_{J \in I^k_\Sigma} \varphi_J(t,x) dx^J}_{\equiv \varphi}, $$
where $I$ and $J$ are multi-indices of size $k-1$ and $k$, respectively, and for a multi-index $I = (i_1,\dots,i_m)$ of size $m$, $dx^I \equiv dx^{i_1} \wedge \dots \wedge dx^{i_m}$. The multi-index set $I^{k-1}_t$ is defined as the set of all multi-indices $(i_1,\dots,i_{k-1})$ such that $i_1 < \dots < i_{k-1}$ and such that each of the indices are non-zero, where we adopt the convention that $dx^0 = dt$. The multi-index set $I^k_\Sigma$ is defined as the set of all multi-indices $(i_1,\dots,i_k)$ such that $i_1 < \dots < i_k$ and such that each of the indices are non-zero. Note that $\varphi$ and $\psi$ are orthogonal with respect to the $L^2\Lambda^k(I \times \Sigma)$ inner product, so square integrability of $\phi$ is equivalent to square integrability of both $\varphi$ and $\psi$. For the square integrability of $d\phi$, we compute the exterior derivative of $\phi$
$$ d\phi = \sum_{I \in I^{k-1}_t} d_\Sigma\psi_I(t,x) \wedge dt \wedge dx^I + \sum_{J \in I^k_\Sigma} \frac{\partial}{\partial t}\varphi_J(t,x) dt \wedge dx^J + \sum_{J \in I^k_\Sigma} d_\Sigma \varphi_J(t,x) \wedge dx^J.  $$
Thus, for square integrability of $\phi$ and $d\phi$, it suffices to have $\psi \in L^2\Lambda^1(I, H\Lambda^{k-1}(\Sigma))$ and $\varphi \in H^1\Lambda^0(I, L^2\Lambda^k(\Sigma)) \cap L^2\Lambda^0(I, H^1\Lambda^k(\Sigma))$. Thus, in the covariant picture, we can view a field $\phi$ as splitting into two fields $\varphi$ and $\psi$. We will treat the case where $\psi = 0$, i.e., we consider theories depending only on $k$-forms of the form
$$ \phi = \sum_{J \in I^k_\Sigma} \varphi_J(t,x) dx^J. $$
In this case, the exterior derivative splits into temporal and spatial derivatives, so we have the identification $(\phi,d\phi) \cong (\varphi,\dot{\varphi}, d_\Sigma \varphi)$. We will subsequently refer to the spatial exterior derivative $d_\Sigma$ simply as $d$.
\end{remark}

We will discuss how a semi-discretization of the variational principle gives rise to finite-dimensional Lagrangian and Hamiltonian dynamical systems (see, for example, \citet{AbMa1978}) and subsequently discuss how the energy-momentum map structure of a canonical field theory (see \citet{GoIsMaMo2004}) is affected by semi-discretization. 


\subsection{Semi-discrete Euler--Lagrange Equations}\label{Semi-Discrete EL Section}

In this section, we formally derive the semi-discrete Euler--Lagrange equations. Given our $\Lambda^{n+1}(X)$-valued Lagrangian density, we can produce an instantaneous density by contracting with the generator of the slicing $\partial/\partial t$, and pulling back by the inclusion of $\Sigma_t$ into $X$. This gives a $\Lambda^n(\Sigma_t)$-valued density, which we will still call $\mathcal{L}$. In coordinates where the density is $L\ dt\wedge V(t)$ and $V(t)$ restricts to a volume form on $\Sigma_t$, $\mathcal{L} = i_t^* L V(t)$.  The action in the canonical framework is given by 
\begin{equation}\label{Canonical Action}
S[\varphi] = \int_I dt \int_{\Sigma_t} \mathcal{L}(x^\mu,\varphi,\dot{\varphi},d\varphi), 
\end{equation}
where $(x^\mu) = (t,x^1,\dots,x^n) = (t,x)$, and $x = (x^i)$ denotes spatial coordinates.

To derive a semi-discrete formulation of the Euler--Lagrange equations, instead of looking at arbitrary variations of the form $v(t,x)$, we instead consider variations of the form $u(t)v(x)$ where $v \in H\Lambda^k(\Sigma)$ and $u \in C_0^2(I,\mathbb{R})$. The basic idea of the semi-discrete formulation is to allow $u$ to be arbitrary but restrict $v$ to a finite-dimensional subspace $\Lambda^k_h$. As in the covariant case, in order to compute the variations formally without going through the Hamilton--Pontryagin principle, we will assume that the projections are cochain projections, with respect to the spatial exterior derivative $d$ on $\Sigma$.
\begin{assumption}
The projections $\pi^m_h: H\Lambda^m(\Sigma) \rightarrow \Lambda^m_h(\Sigma)$ are cochain projections, i.e., $\pi^{k+1}_hd = d\pi^k_h$, with respect to $d: \Lambda^m(\Sigma) \rightarrow \Lambda^{m+1}(\Sigma)$.
\end{assumption}

\begin{remark}Note that we assume a finite element discretization $\Lambda^k_h$ of the fields on the reference space $H\Lambda^k(\Sigma)$, with associated projection $\pi_h$. There are two ways to view the variations with respect to our slicing $\{\Sigma_t\}$. On the one hand, the field variation on the reference space $v \in \Lambda^k_h \subset H\Lambda^k(\Sigma)$ is pulled back to a field variation on a time slice $(i_t^{-1})^*v \in H\Lambda^k(\Sigma_t)$, where we restrict the embedding to its image $i_t: \Sigma \rightarrow \Sigma_t$. On the other hand, we can pull back forms on $\Sigma_t$ to forms on $\Sigma$ via $i^*$, e.g., the Lagrangian density and its derivatives, and perform any relevant integration over the reference space $\Sigma$. We will utilize the latter since in computation it is preferable to work on one reference space. For simplicity, we will not explicitly write the pullbacks $i_t^*$ but rather implicitly incorporate it into the spacetime dependence of the Lagrangian.
\end{remark}

\begin{theorem}\label{Semi-Discrete Euler--Lagrange Theorem}
The semi-discrete Euler--Lagrange equations corresponding to the variational principle $\delta S[\varphi]\cdot (uv) = 0$ for all $v \in \Lambda^k_h$ and $u \in C_0^2(I,\mathbb{R})$ are given by
\begin{equation}\label{Semi-Discrete EL}
\frac{d}{dt}(\partial_3\mathcal{L},v)_{L^2\Lambda^k(\Sigma)} - (\partial_2\mathcal{L},v)_{L^2\Lambda^k(\Sigma)} - (\partial_4\mathcal{L},dv)_{L^2\Lambda^{k+1}(\Sigma)} = 0, \text{ for all } v \in \Lambda^k_h \text{ and } t \in I,
\end{equation}
where $\mathcal{L}$ is evaluated at $(x^\mu,\varphi,\dot{\varphi},d\varphi)$. 
\begin{proof}
With $\mathcal{L}$ evaluated at $(x^\mu,\varphi,\dot{\varphi},d\varphi)$, compute
\begin{align*}
0 &= \delta S[\varphi]\cdot (uv) = \frac{d}{d\epsilon}\Big|_{\epsilon = 0} S[\phi + \epsilon u v] 
\\ &= \int_Idt\int_{\Sigma} \Big[\partial_2\mathcal{L}\wedge\star u(t) v + \partial_3\mathcal{L}\wedge\star \dot{u}(t) v + \partial_4\mathcal{L}\wedge\star u(t) dv\Big]
\\ &= \int_Idt \Big[ \int_{\Sigma} \Big(\partial_3\mathcal{L}\wedge\star v \Big)\dot{u}(t) + \int_{\Sigma} \Big(\partial_2\mathcal{L}\wedge\star  v + \partial_4\mathcal{L}\wedge\star dv\Big)u(t) \Big]
\\ &= \int_Idt \Big[(\partial_3\mathcal{L},v)_{L^2}\dot{u}(t) + (\partial_2\mathcal{L},v)_{L^2}u(t) + (\partial_4\mathcal{L},dv)_{L^2}u(t) \Big]
\\ &= -\int_Idt \Big[\frac{d}{dt}(\partial_3\mathcal{L},v)_{L^2} - (\partial_2\mathcal{L},v)_{L^2} - (\partial_4\mathcal{L},dv)_{L^2} \Big] u(t).
\end{align*}
Since $u \in C_0^2(I,\mathbb{R})$ is arbitrary, the terms in the brackets vanish, which gives (\ref{Semi-Discrete EL}).
\end{proof}
\end{theorem}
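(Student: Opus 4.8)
The plan is to perform the standard first-variation computation for the canonical action \eqref{Canonical Action}, but restricting the spatial part of the variation to the finite element subspace $\Lambda^k_h$ while keeping the temporal factor $u \in C_0^2(I,\mathbb{R})$ arbitrary. First I would write $\delta S[\varphi]\cdot(uv) = \frac{d}{d\epsilon}\big|_{\epsilon=0} S[\varphi + \epsilon\, uv]$ and differentiate under the integral sign; because $\mathcal{L}$ depends on $(x^\mu,\varphi,\dot\varphi,d\varphi)$ and the perturbation $uv$ contributes $u\,v$ to the $\varphi$-slot, $\dot u\, v$ to the $\dot\varphi$-slot, and $u\, dv$ to the $d\varphi$-slot (using that $d$ acts only spatially, so $d(uv) = u\,dv$), this yields three terms. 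Using the Riesz identification to write each derivative $\delta_i\mathcal{L}$ via its representative $\partial_i\mathcal{L}$ paired in the appropriate $L^2\Lambda$ inner product over $\Sigma$ (exactly as in the covariant derivation earlier in the paper, with the implicit pullback $i_t^*$ absorbed into the spacetime dependence), the variation becomes $\int_I \big[(\partial_3\mathcal{L},v)_{L^2}\dot u + (\partial_2\mathcal{L},v)_{L^2}u + (\partial_4\mathcal{L},dv)_{L^2}u\big]\,dt$.

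Next I would integrate the first term by parts in $t$; since $u \in C_0^2(I,\mathbb{R})$ has vanishing boundary values, the boundary contribution drops and one obtains $-\int_I \big[\tfrac{d}{dt}(\partial_3\mathcal{L},v)_{L^2} - (\partial_2\mathcal{L},v)_{L^2} - (\partial_4\mathcal{L},dv)_{L^2}\big]u\,dt$. Finally, since $u$ is an arbitrary $C_0^2$ function on $I$, the fundamental lemma of the calculus of variations forces the bracketed expression to vanish for every $t \in I$; as this holds for each fixed $v \in \Lambda^k_h$, we recover \eqref{Semi-Discrete EL}. The role of the cochain-projection assumption is implicit here: it is what allows us to treat the variation formally (differentiating $S$ directly rather than passing through a Hamilton--Pontryagin principle), exactly as justified in the covariant case, because $d\pi^k_h = \pi^{k+1}_h d$ keeps the jet argument holonomic after semi-discretization.

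I do not expect a genuine obstacle here — the argument is a routine semi-discrete first-variation computation. The only points requiring a little care are: (i) justifying differentiation under the integral and the boundedness of the functionals $\delta_i\mathcal{L}(j^1\varphi)$ so that the Riesz representatives $\partial_i\mathcal{L}$ exist (this is exactly the regularity caveat already made for the covariant Euler--Lagrange derivation, e.g.\ polynomial dependence on $\varphi, \dot\varphi, d\varphi$ with $\Sigma$ compact); (ii) keeping straight that the exterior derivative $d$ in the third slot is the \emph{spatial} derivative on $\Sigma_t$, so that the chain rule gives $d(uv) = u\,dv$ with no $\dot u$ contribution; and (iii) the time-derivative $\tfrac{d}{dt}$ in \eqref{Semi-Discrete EL} is understood in the weak sense, matching how $u$ is integrated against it. None of these change the structure of the proof, which is the four-line display already present in the statement's proof sketch.
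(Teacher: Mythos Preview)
Your proposal is correct and follows essentially the same approach as the paper's proof: compute the first variation, split into the three $\partial_i\mathcal{L}$ terms, integrate the $\dot u$ term by parts in time, and apply the fundamental lemma. The additional remarks you make about regularity, the spatial nature of $d$, and the role of the cochain projection are accurate clarifications but not part of the paper's own argument.
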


\begin{remark}
Similar to our discussion of the covariant case, there is a naturality relation in the variational principle when using spatial cochain projections for the semi-discrete theory. In particular,
$$ S[\pi_h\varphi] = \int_Idt \int_\Sigma \mathcal{L}(x^\mu,\pi_h\varphi,\pi_h \dot{\varphi},d\pi_h\varphi) = \int_Idt \int_\Sigma \mathcal{L}(x^\mu,\pi_h\varphi,\pi_h \dot{\varphi},\pi_h d\varphi) =: S_h[\varphi], $$
so that the restricted variational principle can be realized as a full variational principle on a degenerate action, $\delta S[\pi_h\phi]\cdot (u\  \pi_hv) = \delta S_h[\phi]\cdot (uv)$. Analogous to the discussion in the covariant case, the cochain property additionally removes the ambiguity of how one should discretize the spatial derivative of the field, i.e., whether one should project before or after taking the spatial derivative.
\end{remark}

We now show that the semi-discrete Euler--Lagrange equation (\ref{Semi-Discrete EL}) arises from an instantaneous Lagrangian. To do this, let $\{v_i\}$ be a basis for $\Lambda^k_h$. We define the instantaneous semi-discrete Lagrangian to be
\begin{equation}\label{InstantaneousLagrangian1}
L_h(t,\varphi^i,\dot{\varphi}^i) = \int_\Sigma \mathcal{L}(x^\mu, \varphi^iv_i, \dot{\varphi}^iv_i, \varphi^i dv_i), 
\end{equation}
where $\varphi = \varphi^i(t) v_i \in C^2(I, \Lambda^k_h)$ and the associated action $S_h[\{\varphi^i\}] = \int_I dt L_h(t,\varphi^i,\dot{\varphi}^i)$. We enforce the variational principle over curves $u = u^i(t)v_i \in C^2_0(I,\Lambda^k_h)$. The variational principle yields
\begin{align*}
0 &= dS_h[\{\varphi^i\}]\cdot \{u^j\} = \frac{d}{d\epsilon}\Big|_0 S_h[\{\varphi^i + \epsilon u^j\}] = \sum_j \int_I dt \Big( \frac{\partial L_h}{\partial \varphi^j} (t,\varphi^i,\dot{\varphi}^i) u^j + \frac{\partial L_h}{\partial \dot{\varphi}^j}(t,\varphi^i,\dot{\varphi}^i) \dot{u}^j \Big) \\
&= \sum_j \int_I dt \Big[ \frac{\partial L_h}{\partial \varphi^j} (t,\varphi^i,\dot{\varphi}^i)  - \frac{d}{dt} \frac{\partial L_h}{\partial \dot{\varphi}^j}(t,\varphi^i,\dot{\varphi}^i) \Big] u^j.
\end{align*}
This holds for arbitrary $u^j \in C^2_0(I,\mathbb{R})$, so the term in the brackets,
\begin{equation}\label{Euler--Lagrange Equation}
\frac{\partial L_h}{\partial \varphi^j} (t,\varphi^i,\dot{\varphi}^i)  - \frac{d}{dt} \frac{\partial L_h}{\partial \dot{\varphi}^j}(t,\varphi^i,\dot{\varphi}^i) = 0,
\end{equation}
vanishes for each $j$ by the fundamental lemma of the calculus of variations. Expressing the derivatives of $L_h$ in terms of $\mathcal{L}$,
\begin{subequations}
\begin{align}
\frac{\partial L_h}{\partial \varphi^j} &= (\partial_2\mathcal{L},v_j)_{L^2\Lambda^k(\Sigma)} + (\partial_2\mathcal{L},dv_j)_{L^2\Lambda^{k+1}(\Sigma)}, \label{Semi-Discrete Lagrangian Derivative Equation 1} \\
\frac{\partial L_h}{\partial \dot{\varphi}^j} &= (\partial_3\mathcal{L}, v_j)_{L^2\Lambda^k(\Sigma)}. \label{Semi-Discrete Lagrangian Derivative Equation 2}
\end{align}
\end{subequations}
Substituting these expressions into equation (\ref{Euler--Lagrange Equation}), we see that this is equation (\ref{Semi-Discrete EL}) with the choice $v=v_j$. This holds for each basis form $v_j$ and hence for arbitrary $v \in \Lambda^k_h$. 

We will now introduce a Hamiltonian structure associated with the semi-discretization and show that, in the hyperregular case, this instantaneous Lagrangian system is equivalent to an instantaneous Hamiltonian system.

\subsection{Symplectic Structure of Semi-discrete Dynamics and Hamiltonian Formulation}\label{Semi-discrete Symplectic Structure Section}

Having derived the semi-discrete Euler--Lagrange equation (\ref{Semi-Discrete EL}), we now relate the symplectic structure on the cotangent space of the full field space $T^*H\Lambda^k(\Sigma)$ to a symplectic structure on the discretized space $T^*\Lambda^k_h$, and show that the semi-discrete Euler--Lagrange equations are equivalent to a Hamiltonian flow on $T^* \Lambda^k_h$ if the Lagrangian is hyperregular. 

We work with the reference space $\Sigma$, since via the diffeomorphism $i_t: \Sigma \rightarrow \Sigma_t$, we can pullback forms on $\Sigma$ to $\Sigma_t$ or vice versa, or forms on iterated exterior bundles, such as the symplectic form which is an element of $\Lambda^2(T^*H\Lambda^k(\Sigma))$. On the full phase space $T^*H\Lambda^k(\Sigma)$, the canonical one-form $\theta \in \Lambda^1(T^*H\Lambda^k(\Sigma))$ is given in coordinates by 
\begin{equation}\label{canonical form in canonical formalism}
\theta\big|_{(\varphi,\pi)} = \int_{\Sigma}\pi_A d\varphi^A \otimes d^nx_0,
\end{equation}
and the corresponding symplectic form $\omega = -d\theta$ is given by 
$$ \omega\big|_{(\varphi,\pi)} = \int_{\Sigma} (d\varphi^A \wedge d\pi_A) \otimes d^nx_0. $$
Using the projection map $\pi_h: H\Lambda^k(\Sigma) \rightarrow \Lambda^k_h$, we have the pullback $\pi_h^*: T^*\Lambda^k_h \rightarrow T^*H\Lambda^k(\Sigma)$ and the twice iterated pullback $\pi_h^{**}: \Lambda^p(T^*H\Lambda^k(\Sigma)) \rightarrow \Lambda^p(T^*\Lambda^k_h)$ for any $p$. We define $\theta_h \equiv \pi_h^{**}\theta$ and $\omega_h \equiv \pi_h^{**}\omega = -d\theta_h \in \Lambda^2(T^*\Lambda^k_h)$. To find an expression for $\theta_h$ and $\omega_h$, we will introduce global coordinates on $T^*\Lambda^k_h$. Let $\{v_i\}$ be a finite element basis for $\Lambda^k_h$; we will use the components $\varphi^i$ of the basis expansion $\varphi = \varphi^i v_i$ as the coordinates on $\Lambda^k_h$. Similarly, if we identify $T\Lambda^k_h \cong \Lambda^k_h \times \Lambda^k_h$, then we have a basis for $T^*_\varphi\Lambda^k_h$ consisting of $v^i := (\cdot,v_i)_{L^2}$. This gives the trivialization $T^*\Lambda^k_h \cong \Lambda^k_h \times (\Lambda^k_h)^*$ with global coordinates $(\varphi,\pi) \sim (\varphi^i,\pi_i)$ where $\varphi = \varphi^i v_i$ and $\pi = \pi_i v^i$. We will denote these coordinates using vector notation $\vec{\varphi} = (\varphi^i)$, $\vec{\pi} = (\pi_i)$. 

\begin{prop}
The $1$-form $\theta_h$ is given in the above coordinates by
\begin{equation}\label{Semi-Discrete One Form}
\theta_h = v^j(v_i) \pi_j d\varphi^i = d\vec{\varphi}^TM\vec{\pi},
\end{equation}
where the mass matrix $M$ has components $M_i^{\ j} := v^j(v_i) = \int_\Sigma v_i v_j d^nx_0.$
Furthermore, the $2$-form $\omega_h = -d\theta_h$ is a symplectic form on $T^*\Lambda^k_h$ with coordinate expression
\begin{equation} \label{Semi-Discrete Symplectic Form}
\omega_h = d\varphi^i \wedge v^j(v_i) d\pi_j = d\vec{\varphi}^T\wedge M d\vec{\pi}.
\end{equation}
\begin{proof}
Let $(\varphi,\pi) \in T^*\Lambda^k_h$ and $U \in T_{(\varphi,\pi)}(T^*\Lambda^k_h)$, with coordinate expression
$$ U(\varphi,\pi) = \Phi^i \frac{\partial}{\partial\varphi^i} + \Pi_i\frac{\partial}{\partial \pi_i}. $$
Note that $\theta|_{(\varphi',\pi')}(V)$ gives the canonical pairing between the $\partial/\partial \varphi'$ component of $V$ and $\pi'$ by equation (\ref{canonical form in canonical formalism}). Then, since $\pi_h^*:T^*\Lambda^k_h \hookrightarrow T^*H\Lambda^k(\Sigma)$ is an inclusion, $T\pi_h^*$ is an inclusion on the corresponding tangent space, which gives
\begin{align*}
\theta_h|_{(\varphi,\pi)}(U) = \theta|_{\pi_h^*(\varphi,\pi)} (T\pi_h^*U) = \langle \Phi,\pi\rangle = \Phi^i \pi_j \int_\Sigma v_iv_j d^nx_0 = v^j(v_i)\pi_j \Phi^i = v^j(v_i)\pi_j d\varphi^i(U).
\end{align*}
Equation (\ref{Semi-Discrete Symplectic Form}) then follows from taking (minus) the exterior derivative of equation (\ref{Semi-Discrete One Form}).

The nondegeneracy and closedness of $\omega_h$ clearly follow from the (global) coordinate expression (\ref{Semi-Discrete Symplectic Form}) above. In particular, since the mass matrix $M$ is invertible (hence nondegenerate), $\omega_h$ is nondegenerate. Closedness follows from 
$$d\omega_h = d^2\vec{\varphi}^T \wedge M d\vec{\pi} - d\vec{\varphi}^T \wedge dM \wedge d\vec{\pi} - d\vec{\varphi}^T\wedge M d^2\vec{\pi} = 0.$$ Alternatively, $\omega_h$ is closed as the pullback of a closed form $\omega$. 
\end{proof}
\end{prop}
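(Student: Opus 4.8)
The plan is to compute $\theta_h = \pi_h^{**}\theta$ directly in the global coordinates $(\varphi^i,\pi_j)$ already introduced on $T^*\Lambda^k_h$, and then to read off $\omega_h$ and its properties by a single exterior differentiation. Since $\Lambda^k_h$ is a finite-dimensional vector space these coordinates are genuinely global, so there is no atlas to manage and everything reduces to linear algebra on the fibres together with one derivative.

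First I would evaluate $\theta_h$ on an arbitrary tangent vector. Fix $(\varphi,\pi)\in T^*\Lambda^k_h$ and write $U\in T_{(\varphi,\pi)}(T^*\Lambda^k_h)$ as $U = \Phi^i\,\partial/\partial\varphi^i + \Pi_j\,\partial/\partial\pi_j$. Unwinding the definition, $\theta_h|_{(\varphi,\pi)}(U) = (\pi_h^{**}\theta)(U) = \theta|_{\pi_h^*(\varphi,\pi)}\big(T\pi_h^*\cdot U\big)$. Because $\pi_h^*$ acts on the base as the inclusion $i_h:\Lambda^k_h\hookrightarrow H\Lambda^k$ and on the fibre as the injection $\pi_h^{\top}:(\Lambda^k_h)^*\hookrightarrow (H\Lambda^k)^*$, the tangent map $T\pi_h^*$ sends $U$ to a vector over $i_h\varphi$ whose base component is $i_h\Phi = \Phi^i v_i$, and the canonical one-form $\theta$ on $T^*H\Lambda^k$ pairs the fibre coordinate $\pi_h^{\top}\pi$ only against that base component. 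Hence $\theta_h(U) = (\pi_h^{\top}\pi)(i_h\Phi) = \pi(\pi_h i_h\Phi) = \pi(\Phi)$, using $\pi_h\circ i_h = \mathrm{id}_{\Lambda^k_h}$. Expanding $\pi = \pi_j v^j$ with $v^j = (\cdot,v_j)_{L^2}$ and $\Phi = \Phi^i v_i$ then gives $\theta_h(U) = \pi_j\Phi^i\,v^j(v_i) = \pi_j\Phi^i\int_\Sigma v_i v_j\,d^nx_0 = v^j(v_i)\pi_j\,d\varphi^i(U)$, which is exactly $\theta_h = v^j(v_i)\pi_j\,d\varphi^i = d\vec\varphi^{\top}M\vec\pi$.

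For $\omega_h = -d\theta_h$, the coefficients $v^j(v_i) = \int_\Sigma v_i v_j\,d^nx_0$ are constants, so $d\theta_h = v^j(v_i)\,d\pi_j\wedge d\varphi^i$ and therefore $\omega_h = v^j(v_i)\,d\varphi^i\wedge d\pi_j = d\vec\varphi^{\top}\wedge M d\vec\pi$. Closedness is then immediate, either from $d\omega_h = -d^2\theta_h = 0$ or from the fact that $\omega_h = \pi_h^{**}\omega$ is the pullback of the closed form $\omega$ and $d$ commutes with pullback. For nondegeneracy I would observe that in the coordinates $(\varphi^i,\pi_j)$ the coefficient matrix of $\omega_h$ is block-antidiagonal with blocks $\pm M$, so $\omega_h$ is nondegenerate precisely when $M$ is invertible; and $M$ is the Gram matrix of the finite element basis $\{v_i\}$ with respect to the $L^2$ inner product on $\Lambda^k_h$, hence symmetric positive-definite, hence invertible.

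I do not expect a serious obstacle here: the content is entirely the cotangent-lift computation above plus elementary facts about Gram matrices. The one point that needs care is the bookkeeping around the fibre coordinates: the momenta $\pi_j$ are defined through the $L^2$-pairing dual basis $v^j = (\cdot,v_j)_{L^2}$ rather than the naive pointwise dual basis, and it is precisely this choice that makes the mass matrix $M$, rather than the identity, appear in $\theta_h$ and $\omega_h$. One should also be explicit (as above) that $T\pi_h^*$ merely includes the base component of $U$, so that evaluation of $\theta$ collapses to the single pairing $\pi(\Phi)$; this uses only linearity of $\pi_h$ and $\pi_h\circ i_h = \mathrm{id}$.
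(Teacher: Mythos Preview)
Your argument is correct and follows essentially the same route as the paper: evaluate $\theta_h=\pi_h^{**}\theta$ on a generic tangent vector via the inclusion $\pi_h^*:T^*\Lambda^k_h\hookrightarrow T^*H\Lambda^k$, reduce to the pairing $\pi(\Phi)=\pi_j\Phi^i\,v^j(v_i)$, then differentiate and check closedness and nondegeneracy from the coordinate expression. Your version is in fact a bit more explicit than the paper's---you spell out the base/fibre decomposition of $\pi_h^*$ and use $\pi_h\circ i_h=\mathrm{id}$ to collapse the pairing, and you justify invertibility of $M$ by noting it is the $L^2$ Gram matrix of a basis (hence SPD), whereas the paper simply asserts $M$ is invertible---but the underlying idea is identical.
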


\begin{remark}
Under a change of basis, $\omega_h$ can be seen as a canonical symplectic form on $T^*\Lambda^k_h$. To see $\omega_h$ in canonical form, we change basis. Let $Q$ be an orthogonal matrix which diagonalizes the symmetric mass matrix $M$, i.e., $QMQ^T = D$. Define coordinates $\vec{q} = Q\vec{\varphi}$ and $\vec{p} = DQ\vec{\pi}$; then
$$ \omega_h = d\vec{\varphi}^T \wedge M d\vec{\pi} = d\vec{\varphi}^T \wedge Q^T DQ d\vec{\pi} = d(Q\vec{\varphi})^T \wedge d(DQ\vec{\pi}) = d\vec{q}^T \wedge d\vec{p}. $$
However, we will work with the form of $\omega_h$ corresponding to the finite element basis (\ref{Semi-Discrete Symplectic Form}) since it is more directly applicable to our discretization. Also, if we chose the dual basis $l^j$ to be different from the basis $v^j = (\cdot,v_j)$, $M$ would not necessarily be symmetric but would still define a symplectic form. This follows from the fact that, for a finite element method to be consistent, one requires that the matrix with components $l^j(v_i)$ is invertible. Hence, it is more natural to work with the coordinates $(\vec{\varphi},\vec{\pi})$.
\end{remark}

Let $H_d: T^*\Lambda^k_h \rightarrow \mathbb{R}$ be a given semi-discrete Hamiltonian, expressed in our global coordinates as $H_d(\vec{\varphi},\vec{\pi})$. Later, we will choose the semi-discrete Hamiltonian induced by the semi-discrete Lagrangian. The dynamics of the Hamiltonian system $(\omega_h,H_d)$ is given by the flow generated by the Hamiltonian vector field $X_{H_d}$ satisfying $X_{H_d} \lrcorner\ \omega_h = dH$, or with vector field components $X_{H_d} = (\dot{\varphi}^i,\dot{\pi}_i)$, 
\begin{equation}\label{Hamiltonian Dynamics for Semi-Discrete Symplectic Form}
\begin{cases} \ M_{\ k}^j\dot{\varphi}^k = \frac{\partial H_d}{\partial \pi_j}, \\
\ M_j^{\ k}\dot{\pi}_k = - \frac{\partial H_d}{\partial \varphi^j}. \end{cases}
\end{equation}
\begin{remark}
In the above, we denote row $j$ and column $k$ of $M$ as $M_j^{\ k}$ and for $M^T$ as $M^j_{\ k}$, which allows for the more general case where $M$ is asymmetric that was discussed previously. If we define $\vec{z}$ as the concatenation of $\vec{\varphi}$ and $\vec{\pi}$, the equations (\ref{Hamiltonian Dynamics for Semi-Discrete Symplectic Form}) can be written in skew-symmetric form,
$$ \frac{d}{dt}\vec{z} = J_M \nabla_{\vec{z}} H_d, $$
where $J_M = \begin{pmatrix} 0 & (M^{-1})^T \\ -M^{-1} & 0 \end{pmatrix} $.
\end{remark}

\begin{remark}
In our discussion of the covariant discretization of Lagrangian field theories, we saw that the variation of the discretized action on the discrete space can be naturally related to the variation of a degenerate action on the full space.  In the semi-discrete setting, an analogous statement can be made in terms of the semi-discrete symplectic structure and a presymplectic structure on the full space. Namely, we have the symplectic form $\omega_h \in \Lambda^2(T^*\Lambda^k_h)$. Now, consider the presymplectic form $\tilde{\omega}_h \in \Lambda^2(T^*H\Lambda^k)$ defined by $\tilde{\omega}_h = i_h^{**}\omega_h$ where $i_h = (\pi_h)^{\dagger}: \Lambda^k_h \hookrightarrow H\Lambda^k$ is the inclusion.  Clearly, $\tilde{\omega}_h$ is closed as the pullback of a closed form. To see that it is degenerate, observe that for any $V,W \in \mathfrak{X}(T^*H\Lambda^k)$, we have,
$$ \tilde{\omega}_h(V,W) = (i_h^{**} \pi_h^{**}\omega)(V,W) = \omega(T(\pi_h^* i_h^*)V, T(\pi_h^* i_h^*)W). $$
Since $i_h\pi_h$ has a nontrivial kernel, so does $T(\pi_h^*i_h^*) = T(i_h\pi_h)^*$ and hence $\tilde{\omega}_h$ is degenerate. The flow of a vector field in the kernel of $\tilde{\omega}_h$, projected back to the semi-discrete space, corresponds to equivalent states in the semi-discrete setting. Quotienting the presymplectic manifold $(T^*H\Lambda^k, \tilde{\omega}_h)$ by the orbits of the flow of vector fields in the kernel of $\tilde{\omega}_h$ gives the symplectic manifold $(T^*\Lambda^k_h,\omega_h)$. This relates a symplectic flow on $(T^*\Lambda^k_h,\omega_h)$ to an equivalence class of presymplectic flows on $(T^*H\Lambda^k,\tilde{\omega}_h)$, where the equivalence class is formed by orbits of the flow of vector fields in the kernel of $\tilde{\omega}_h$.
\end{remark}

We also allow our semi-discrete Hamiltonian to explicitly depend on time, $H_d: I \times T^*\Lambda^k_h \rightarrow \mathbb{R}$, i.e., the domain of $H_d$ is the extended phase space $I \times T^*\Lambda^k_h$. The dynamics are now given by any vector field $X_{H_d}$ on the extended phase space such that $X_{H_d}\lrcorner\ (\omega_h + dH_d\wedge dt) = 0$, where $\omega_h$ is extended to the full phase space by pulling back along the projection $I \times T^*\Lambda^k_h \rightarrow T^*\Lambda^k_h$. If we consider the vertical component $X^V_{H_d}$ of $X_{H_d}$ with respect to the trivial bundle $I \times \Lambda^k_h \rightarrow I$, then the above is equivalent to $X^V_{H_d} \lrcorner\ \omega_h = d_v{H_d}$ holding for all times. This is given again by equation (\ref{Hamiltonian Dynamics for Semi-Discrete Symplectic Form}) but with explicit time dependence in $H_d$. Here,  $d_vH_d$ is the vertical exterior derivative of $H_d$ with coordinate expression $d_vH_d(t,\varphi,\pi) = \frac{\partial H_d}{\partial \varphi^i}d\varphi^i + \frac{\partial H_d}{\partial \pi_j} d\pi_j$. We could also allow explicit time dependence in $M$, but since we pullback our integration to $\Sigma$, we view $M$ as constant and absorb the time dependence into $H_d$. 

Now, we would like to relate the semi-discrete Euler--Lagrange equations (\ref{Semi-Discrete EL}) to the Hamiltonian dynamics of $\omega_h$ by making a particular choice of semi-discrete Hamiltonian. The first step is to produce a Hamiltonian associated to the instantaneous Lagrangian 
$$ L(t,\varphi,\dot{\varphi})= \int_{\Sigma}\mathcal{L}(x^\mu,\varphi,\dot{\varphi},d\varphi). $$
To do this, we use the Legendre transform, which takes the form $\pi = \partial L/\partial \dot{\varphi}$. The pairing of $\pi$ with a tangent vector field with components $(\varphi,v)$ is given by computing the variation 
$$ \langle \pi, v \rangle = \left\langle \frac{\partial L}{\partial \dot{\varphi}}, v \right\rangle = \frac{d}{d\epsilon}\Big|_{\epsilon = 0}L(t,\varphi,\dot{\varphi}+\epsilon v) = (\partial_3\mathcal{L},v)_{L^2\Lambda^k}. $$
The instantaneous Hamiltonian is given by
$$ H(t,\varphi,\pi) = \langle \pi,\dot{\varphi} \rangle - L(t,\varphi,\dot{\varphi}), $$ 
where the $\dot{\varphi}$ dependence is removed either by extremizing over $\dot{\varphi}$ or, assuming $L$ is hyperregular, by inverting the Legendre transform to obtain $\dot{\varphi}$ as a function of $(\varphi,\pi)$. Restricting to our finite element space $T^*\Lambda^k_h$ gives a semi-discrete Hamiltonian $H_h$, defined by
$$ H_h(t,\varphi^i,\pi_i) = H(t,\varphi^iv_i,\pi_iv^j) = \langle \pi_jv^j,\dot{\varphi}^iv_i\rangle - L(t,\varphi^iv_i,\dot{\varphi}^iv_i) = M_i^{\ j}\pi_j\dot{\varphi}^i - L(t,\varphi^iv_i,\dot{\varphi}^iv_i). $$
Note that $H_h$ corresponds to the Legendre transform of the semi-discrete Lagrangian (\ref{InstantaneousLagrangian1}), where we recall the duality pairing between $(\varphi^j,\pi_j)\in T^*\Lambda^k_h$ and $(\varphi^i,\dot{\varphi}^i)\in T\Lambda^k_h$ is given by $M_i^{\ j}\pi_j\dot{\varphi}^i$.
\begin{prop}\label{Equivalence of Semi-Discrete Formulations Prop}
Assume that $L_h$ is hyperregular, then the dynamics associated with the Hamiltonian system $(\omega_h,H_h)$ is equivalent to the semi-discrete Euler--Lagrange equations (\ref{Semi-Discrete EL}). 
\begin{proof}
Since we assumed that $L_h$ is hyperregular, i.e., that the associated Legendre transform is a diffeomorphism $T\Lambda^k_h \rightarrow T^*\Lambda^k_h$, we have $\dot{\varphi}^i$ as a function of $(\varphi^j,\pi_j)$. To verify the equivalence, we compute the equations (\ref{Hamiltonian Dynamics for Semi-Discrete Symplectic Form}) for our given system. Compute for $L$ evaluated at $(t,\varphi^iv_i,\dot{\varphi}^iv_i)$, 
\begin{align*}
M_j^{\ k}\dot{\pi}_k &= -\frac{\partial H_h}{\partial \varphi^j} = -\frac{\partial}{\partial\varphi^j}\Big(M_i^{\ k}\pi_k\dot{\varphi}^i - L \Big)
\\ &= -M_i^{\ k}\pi_k\frac{\partial\dot{\varphi}^i}{\partial\varphi^j} +  \frac{\partial}{\partial\varphi^j}\int_{\Sigma_t}\mathcal{L}(x^\mu,\varphi^iv_i,\dot{\varphi}^iv_i,\varphi^idv_i) 
\\ &= -M_i^{\ k}\pi_k\frac{\partial\dot{\varphi}^i}{\partial\varphi^j} + \int_{\Sigma_t}\Big[\partial_2\mathcal{L}\wedge\star\frac{\partial(\varphi^iv_i)}{\partial\varphi^j} + \partial_3\mathcal{L}\wedge\star\frac{\partial(\dot{\varphi}^iv_i)}{\partial\varphi^j} + \partial_4\mathcal{L}\wedge\star\frac{\partial(\varphi^idv_i)}{\partial\varphi^j}\Big]
\\ &= -M_i^{\ k}\pi_k\frac{\partial\dot{\varphi}^i}{\partial\varphi^j} + \int_{\Sigma_t}\Big[\partial_2\mathcal{L}\wedge\star v_j + \partial_3\mathcal{L}\wedge\star v_i \frac{\partial\dot{\varphi}^i}{\partial\varphi^j} + \partial_4\mathcal{L}\wedge\star dv_j\Big]
\\ &= -M_i^{\ k}\pi_k\frac{\partial\dot{\varphi}^i}{\partial\varphi^j} + (\partial_3\mathcal{L},v_i)\frac{\partial\dot{\varphi}^i}{\partial\varphi^j} + (\partial_2\mathcal{L},v_j)_{L^2} + (\partial_4\mathcal{L},dv_j)_{L^2}
\\ &= (\partial_2\mathcal{L},v_j)_{L^2} + (\partial_4\mathcal{L},dv_j)_{L^2},
\end{align*}
where in the second to last line, the first two terms cancel since $(\partial_3\mathcal{L},v_i) = \langle\pi,v_i\rangle = \langle\pi_kv^k,v_i\rangle = M_i^{\ k}\pi_k$. Then, note the left hand side is equivalently given by
$$ M_j^{\ k}\dot{\pi}_k = M_j^{\ k}\frac{d}{dt}\pi_k = \frac{d}{dt}\big(M_j^{\ k}\pi_k\big) = \frac{d}{dt}\big( \langle v^k,v_j\rangle\pi_k \big) =  \frac{d}{dt} \langle \pi,v_j\rangle = \frac{d}{dt}(\partial_3\mathcal{L},v_j)_{L^2}. $$
Thus, 
$$ \frac{d}{dt}(\partial_3\mathcal{L},v_j)_{L^2} = (\partial_2\mathcal{L},v_j)_{L^2} + (\partial_4\mathcal{L},dv_j)_{L^2}, $$
which holds for each $j$ and hence is equivalent to (\ref{Semi-Discrete EL}). 
\end{proof}
\end{prop}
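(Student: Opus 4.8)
The plan is to exploit the classical equivalence between a hyperregular Lagrangian system and its Legendre-dual Hamiltonian system, while carefully bookkeeping the mass matrix $M$ that appears because $(\omega_h,H_h)$ is not written in Darboux coordinates. First I would use hyperregularity of $L_h$: the fiber derivative $\mathbb{F}L_h: T\Lambda^k_h \to T^*\Lambda^k_h$, which in the coordinates of (\ref{Semi-Discrete One Form})--(\ref{Semi-Discrete Symplectic Form}) sends $(\varphi^i,\dot\varphi^i)$ to $(\varphi^i,\pi_i)$ with $M_i^{\ j}\pi_j = (\partial_3\mathcal{L},v_i)_{L^2}$ (this is (\ref{Semi-Discrete Lagrangian Derivative Equation 2}) together with the pairing $\langle\pi,v_i\rangle = M_i^{\ k}\pi_k$), is a diffeomorphism. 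Hence $\dot\varphi^i$ may be regarded as a smooth function of $(\varphi^j,\pi_j)$, and $H_h(t,\varphi^i,\pi_i) = M_i^{\ j}\pi_j\dot\varphi^i - L_h(t,\varphi^i,\dot\varphi^i)$ is well-defined (on the extended phase space $I\times T^*\Lambda^k_h$ if there is explicit time dependence from the foliation).

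Next I would write out Hamilton's equations for $(\omega_h,H_h)$, which by the coordinate form of $\omega_h$ are precisely (\ref{Hamiltonian Dynamics for Semi-Discrete Symplectic Form}). For the first equation, differentiating $H_h$ by $\pi_j$ and applying the chain rule to the implicit $\dot\varphi^i$-dependence, the terms proportional to $\partial\dot\varphi^i/\partial\pi_j$ cancel against $(\partial L_h/\partial\dot\varphi^i)(\partial\dot\varphi^i/\partial\pi_j)$ by (\ref{Semi-Discrete Lagrangian Derivative Equation 2}); what remains is exactly $M_i^{\ j}\dot\varphi^i$, so the first equation is the identity already built into the Legendre transform and carries no extra content. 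For the second equation, I would differentiate $H_h$ by $\varphi^j$; the terms proportional to $\partial\dot\varphi^i/\partial\varphi^j$ cancel for the same reason (using $M_i^{\ k}\pi_k = (\partial_3\mathcal{L},v_i)_{L^2}$), leaving $-\partial H_h/\partial\varphi^j = (\partial_2\mathcal{L},v_j)_{L^2} + (\partial_4\mathcal{L},dv_j)_{L^2}$ by (\ref{Semi-Discrete Lagrangian Derivative Equation 1}) applied to the integrand. Then, since $M$ is constant in $t$ (the integration being pulled back to $\Sigma$), I would rewrite the left side as $M_j^{\ k}\dot\pi_k = \frac{d}{dt}(M_j^{\ k}\pi_k) = \frac{d}{dt}\langle\pi,v_j\rangle = \frac{d}{dt}(\partial_3\mathcal{L},v_j)_{L^2}$. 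Combining, the second Hamilton equation becomes $\frac{d}{dt}(\partial_3\mathcal{L},v_j)_{L^2} - (\partial_2\mathcal{L},v_j)_{L^2} - (\partial_4\mathcal{L},dv_j)_{L^2} = 0$, which is (\ref{Semi-Discrete EL}) tested against $v_j$, hence against every $v\in\Lambda^k_h$ by linearity.

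For the converse I would run this backwards: given $\varphi(t) = \varphi^i(t)v_i$ solving (\ref{Semi-Discrete EL}), set $\pi(t) = \pi_i(t)v^i$ by $M_i^{\ j}\pi_j = (\partial_3\mathcal{L},v_i)_{L^2}$; hyperregularity makes this a well-defined curve in $T^*\Lambda^k_h$, and the same identities show it satisfies (\ref{Hamiltonian Dynamics for Semi-Discrete Symplectic Form}), so it is an integral curve of the Hamiltonian vector field $X_{H_h}$ (respectively, of the field-space component of $X_{H_h}$ on the extended phase space). This gives the claimed equivalence of dynamics.

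The main obstacle is bookkeeping rather than conceptual: one must track the mass-matrix factors $M_i^{\ j}$ throughout, distinguishing $M_j^{\ k}$ from $M^j_{\ k}$ since $M$ need not be symmetric for an arbitrary choice of dual basis, and verify that all chain-rule terms involving $\partial\dot\varphi^i/\partial\varphi^j$ and $\partial\dot\varphi^i/\partial\pi_j$ cancel against the derivatives of $M_i^{\ j}\pi_j\dot\varphi^i$. That cancellation is precisely the mechanism by which the Legendre transform turns the Euler--Lagrange equations into Hamilton's equations, and it rests entirely on the identification $M_i^{\ k}\pi_k = (\partial_3\mathcal{L},v_i)_{L^2}$; once that is fixed, the remainder is a direct computation of the kind already displayed for the $\dot\pi$ equation in the text.
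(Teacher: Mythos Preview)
Your proposal is correct and follows essentially the same approach as the paper: both compute Hamilton's equations (\ref{Hamiltonian Dynamics for Semi-Discrete Symplectic Form}) for $H_h$, observe that the chain-rule terms in $\partial\dot\varphi^i/\partial\varphi^j$ cancel via the Legendre-transform identity $M_i^{\ k}\pi_k = (\partial_3\mathcal{L},v_i)_{L^2}$, and then rewrite $M_j^{\ k}\dot\pi_k$ as $\frac{d}{dt}(\partial_3\mathcal{L},v_j)_{L^2}$ using the time-independence of $M$. Your treatment is in fact slightly more complete than the paper's, since you also verify that the first Hamilton equation reduces to a tautology and you spell out the converse direction explicitly, whereas the paper only displays the $\dot\pi$-computation.
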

\begin{remark}
In the above proposition, we assumed that $L_h$ was hyperregular for the equivalence. If $L_h$ is not hyperregular, corresponding to a degenerate field theory, the dynamics associated to $H_h$ evolve over a primary constraint surface. In this case, the dynamics of $H_h$ on the constraint surface corresponds to a (not necessarily unique) solution of the semi-discrete Euler--Lagrange equation. In this setting, the dynamics are associated to the modified Hamiltonian $\bar{H}(\vec{\varphi},\vec{\pi},\lambda) = H(\vec{\varphi},\vec{\pi})+ \lambda^A\Phi_A(\vec{\varphi},\vec{\pi})$. 

The above also shows that, in the hyperregular case, the semi-discrete Euler--Lagrange equations correspond to a symplectic flow. The associated symplectic form is the pullback of $\omega_h$ by the Legendre transform $\mathbb{F}L_h: T\Lambda^k_h \rightarrow T^*\Lambda^k_h$. In the non-regular case, the semi-discrete Euler--Lagrange equations correspond to a presymplectic flow.
\end{remark}

To summarize, in this section, we have pulled back the symplectic structure on $T^*H\Lambda^k$ to $T^*\Lambda^k_h$ and showed that the dynamics of the Hamiltonian system $(\omega_h,H_h)$ is equivalent, in the hyperregular case, to the semi-discrete Euler--Lagrange equations of the corresponding Lagrangian system. By applying a numerical integrator for the finite-dimensional Hamiltonian system associated to $H_h$, we obtain a full discretization of the evolution problem for a field theory. 

\subsection{Energy-Momentum Map}\label{Energy-Momentum Section}
In this section, we examine how symmetries in the canonical formulation are affected by the semi-discretization of the field theory. In the canonical setting, the manifestation of the covariant momentum map is the energy-momentum map. If a vector in the Lie algebra of the symmetry group gives rise to an infinitesimal generator on $X$ which is transverse to the foliation, its pairing with the energy-momentum map equals the instantaneous Hamiltonian defined by that generator (the ``energy'' component). On the other hand, if the corresponding generator is tangent to the foliation, the pairing is given by the usual momentum map of the instantaneous Hamiltonian theory, corresponding to the canonical form (\ref{canonical form in canonical formalism}) (the ``momentum'' component). We will see that, in the case of an equivariant discretization, the iterated pullback of the energy-momentum map provides the natural energy-momentum structure of the semi-discrete theory.

We start by investigating the momentum map structure of the semi-discrete theory. Let $K$ be a Lie group acting on $H\Lambda^k$, with Lie algebra $\mathfrak{k} := T_e K$. For $\eta \in K$, we denote the group action $\overline{\eta}\varphi := \eta\cdot\varphi$ and the associated cotangent action is given by $\widetilde{\eta}:=(\overline{\eta^{-1}})^*$. We use the same notation for these actions restricted to $\Lambda^k_h$ and $T^*\Lambda^k_h$, where the restriction is well-defined if the projection is group-equivariant.

\begin{prop}\label{Momentum Map Prop}
Assume that $K$ acts by symplectomorphisms on $(T^*H\Lambda^k,\omega)$. Since $K$ acts by cotangent lifts on $T^*H\Lambda^k$, it admits a canonical momentum map $J: T^*H\Lambda^k \rightarrow \mathfrak{k}^*$. Furthermore, assume that the projection map $\pi_h$ is equivariant with respect to the $K$-action on $H\Lambda^k$ and $\Lambda^k_h$, i.e., $\pi_h\bar{\eta} \varphi = \bar{\eta}\pi_h\varphi$. Then, $K$ acts by cotangent-lifted symplectomorphisms on $(T^*\Lambda^k_h,\omega_h)$ and the canonical momentum map for this action $J_h$ is given by $J_h = \pi_h^{**}J = J \circ \pi_h^*$.
\begin{proof}
To see that $K$ preserves $\omega_h$, for any $\eta \in K$, by equivariance, we have that
$$ \widetilde\eta^*\omega_h = (\overline{\eta^{-1}})^{**}\pi_h^{**}\omega = (\overline{\eta^{-1}}\pi_h)^{**}\omega = (\pi_h\overline{\eta^{-1}})^{**}\omega = \pi_h^{**}(\overline{\eta^{-1}})^{**}\omega = \pi_h^{**}\omega = \omega_h.$$
A similar result holds for $\theta_h$, since $K$ preserves $\theta$ by virtue of the fact that it acts by cotangent lifted actions.

The canonical momentum map $J$ is given by $\langle J(\varphi,\pi),\xi\rangle = \xi_{T^*H\Lambda^k}(\varphi,\pi)\lrcorner\, \theta|_{(\varphi,\pi)}$ for $(\varphi,\pi) \in T^*H\Lambda^k$ whereas $\langle J_h(\varphi,\pi),\xi \rangle = \xi_{T^*\Lambda^k_h}(\varphi,\pi)\lrcorner\, \theta_h|_{(\varphi,\pi)}$ for $(\varphi,\pi) \in T^*\Lambda^k_h$. These are both momentum maps for their respective actions since $K$ acts by cotangent lifts. Then,
\begin{align*}
\langle J_h(\varphi,\pi),\xi \rangle &= \xi_{T^*\Lambda^k_h}(\varphi,\pi)\lrcorner\, \theta_h = \xi_{T^*\Lambda^k_h}(\varphi,\pi)\lrcorner\, \pi_h^{**}\theta 
\\ &= [T\pi_h^*\xi_{T^*\Lambda^k_h}(\varphi,\pi)]\lrcorner\, \theta
= \Big[T\pi_h^*\frac{d}{dt}\Big|_{t = 0}\widetilde{e^{t\xi}}(\varphi,\pi)\Big] \lrcorner\, \theta  
\\ &= \Big[\frac{d}{dt}\Big|_{t = 0}\pi_h^*(\overline{e^{-t\xi}})^*(\varphi,\pi)\Big] \lrcorner\, \theta  
\\ &= \Big[\frac{d}{dt}\Big|_{t = 0}(\overline{e^{-t\xi}}\pi_h)^*(\varphi,\pi)\Big] \lrcorner\, \theta  
\\ &= \Big[\frac{d}{dt}\Big|_{t = 0}(\pi_h\overline{e^{-t\xi}})^*(\varphi,\pi)\Big] \lrcorner\, \theta  
\\ &= \Big[\frac{d}{dt}\Big|_{t = 0}(\overline{e^{-t\xi}})^*\pi_h^*(\varphi,\pi)\Big] \lrcorner\, \theta  
\\ &= \xi_{T^*H\Lambda^k}(\pi_h^*(\varphi,\pi))\lrcorner\, \theta = \langle (J\circ\pi_h^*)(\varphi,\pi),\xi \rangle.
\end{align*}
where we have implicitly evaluated $\theta_h$ at $(\varphi,\pi)$ and $\theta$ at $\pi_h^*(\varphi,\pi)$. Hence, $J_h = J \circ \pi_h^*$ or, equivalently, $J_h = \pi_h^{**}J$.
\end{proof}
\end{prop}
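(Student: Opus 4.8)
The plan is to chase definitions in two stages: first establish that the $K$-action descends to a cotangent-lifted action on $(T^*\Lambda^k_h,\omega_h)$, then identify its canonical momentum map with $J\circ\pi_h^*$. The single tool driving both stages is the hypothesis that $\pi_h$ intertwines the $K$-actions, $\pi_h\circ\overline{\eta} = \overline{\eta}\circ\pi_h$, together with the fact that cotangent lifts and iterated pullbacks are (contravariantly) functorial, so that this intertwining relation propagates to $\widetilde{\eta}\circ\pi_h^* = \pi_h^*\circ\widetilde{\eta}$ and correspondingly to $\widetilde{\eta}^*\circ\pi_h^{**} = \pi_h^{**}\circ\widetilde{\eta}^*$ on forms.

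For the first stage, I would begin by checking that $\widetilde{\eta}$ restricts to a map $T^*\Lambda^k_h\to T^*\Lambda^k_h$: writing a point of $T^*\Lambda^k_h$ inside $T^*H\Lambda^k$ via $\pi_h^*$ as $(\varphi,\pi\circ\pi_h)$ with $\varphi\in\Lambda^k_h$, equivariance of $\pi_h$ gives $(\pi\circ\pi_h)\circ\overline{\eta}^{-1} = (\pi\circ\overline{\eta}^{-1})\circ\pi_h$, so $\widetilde{\eta}(\varphi,\pi\circ\pi_h)$ again lies in the image of $\pi_h^*$ and in fact equals the cotangent lift of $\overline{\eta}|_{\Lambda^k_h}$ applied to $(\varphi,\pi)$; thus $K$ acts on $T^*\Lambda^k_h$ by cotangent lifts. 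Invariance of $\omega_h$ and $\theta_h$ is then immediate: since $\omega_h=\pi_h^{**}\omega$ and $K$ acts symplectically on $T^*H\Lambda^k$, $\widetilde{\eta}^*\omega_h = \widetilde{\eta}^*\pi_h^{**}\omega = \pi_h^{**}(\widetilde{\eta}^*\omega) = \pi_h^{**}\omega = \omega_h$, and likewise $\widetilde{\eta}^*\theta_h = \pi_h^{**}(\widetilde{\eta}^*\theta) = \pi_h^{**}\theta = \theta_h$, using that the cotangent lift $\widetilde{\eta}$ on $T^*H\Lambda^k$ preserves the tautological one-form $\theta$.

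For the second stage, recall that the canonical momentum map of a cotangent-lifted action is $\langle J(z),\xi\rangle = \xi_{T^*H\Lambda^k}(z)\lrcorner\,\theta|_z$, and similarly $\langle J_h(\varphi,\pi),\xi\rangle = \xi_{T^*\Lambda^k_h}(\varphi,\pi)\lrcorner\,\theta_h$; both are genuinely the canonical momentum maps because both actions are by cotangent lifts, which is exactly what the first stage supplies. I would then substitute $\theta_h=\pi_h^{**}\theta$, move the contraction through the iterated pullback to replace $\xi_{T^*\Lambda^k_h}(\varphi,\pi)$ by its image $T\pi_h^*\cdot\xi_{T^*\Lambda^k_h}(\varphi,\pi)$ against $\theta$ evaluated at $\pi_h^*(\varphi,\pi)$, and finally differentiate the intertwining relation $\pi_h^*\circ\widetilde{e^{t\xi}} = \widetilde{e^{t\xi}}\circ\pi_h^*$ at $t=0$ to obtain $T\pi_h^*\cdot\xi_{T^*\Lambda^k_h}(\varphi,\pi) = \xi_{T^*H\Lambda^k}(\pi_h^*(\varphi,\pi))$. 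Combining these yields $\langle J_h(\varphi,\pi),\xi\rangle = \xi_{T^*H\Lambda^k}(\pi_h^*(\varphi,\pi))\lrcorner\,\theta = \langle J(\pi_h^*(\varphi,\pi)),\xi\rangle$, i.e.\ $J_h = J\circ\pi_h^* = \pi_h^{**}J$.

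There is no deep obstacle here: the argument is essentially a bookkeeping exercise in functoriality of pullbacks. The step requiring the most care is confirming that the equivariance hypothesis on $\pi_h$ is precisely what is needed both for $\widetilde{\eta}$ to restrict to $T^*\Lambda^k_h$ and for the commutation relations between $\widetilde{\eta}$ and $\pi_h^*$ (resp.\ $\pi_h^{**}$) to hold; everything else — smoothness, well-definedness of $\pi_h^*$ and of the iterated pullbacks in the infinite-dimensional ambient space — is harmless since the action is linear and $\Lambda^k_h$ is finite-dimensional. It is also worth stating explicitly that $J_h$ being the \emph{canonical} momentum map, rather than merely \emph{a} momentum map, is automatic once we know $K$ acts by cotangent lifts on $T^*\Lambda^k_h$, which is why establishing the first stage is logically prior to the second.
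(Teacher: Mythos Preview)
Your proposal is correct and follows essentially the same approach as the paper: both arguments hinge on the functoriality of pullbacks together with the equivariance hypothesis $\pi_h\circ\overline{\eta}=\overline{\eta}\circ\pi_h$, first to show $\widetilde{\eta}^*\omega_h=\omega_h$ (and $\widetilde{\eta}^*\theta_h=\theta_h$), and then to compute $\langle J_h,\xi\rangle$ by substituting $\theta_h=\pi_h^{**}\theta$, pushing the contraction through $T\pi_h^*$, and differentiating the intertwining relation to identify $T\pi_h^*\cdot\xi_{T^*\Lambda^k_h}$ with $\xi_{T^*H\Lambda^k}\circ\pi_h^*$. Your version is slightly more explicit in verifying that $\widetilde{\eta}$ indeed restricts to $T^*\Lambda^k_h$ as a cotangent lift, which the paper treats as implicit, but otherwise the two proofs are the same.
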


\begin{remark}
As can be seen in the proof, one does not need full $K$-equivariance of the projection, but only infinitesimal equivariance, i.e., $\pi_h(e^{t\xi} \varphi) - e^{t\xi}\pi_h\varphi = o(t)$. 

Furthermore, one can weaken the notion of equivariance to $\pi_h \bar{\eta} = \overline{ \psi_h(\eta) } \pi_h$, where $\psi_h: K \rightarrow K$ is a Lie group homomorphism. In this case, if $\tilde{\psi}_h$ denotes the induced Lie algebra homomorphism, we can see from the above proof that the semi-discrete momentum map is related to the original momentum map via $\langle J_h ,\xi\rangle = \langle J \circ \pi_h^*, \tilde{\psi}_h(\xi)\rangle.$

As discussed in the covariant case, the weakening of this condition can allow us to construct more general projections.
\end{remark}

\begin{corollary}
Assuming as in the proposition, if $J$ is $\text{Ad}^*$-equivariant, then so is $J_h$.
\begin{proof}
This follows immediately from $J_h = J \circ \pi_h^*$, $K$-equivariance of $\pi_h$, and the $\text{Ad}^*$-equivariance $J \circ \widetilde{\eta} = \text{Ad}^*_\eta J$ (where $\text{Ad}^*_\eta := (\text{Ad}(\eta^{-1}))^*)$:
\begin{align*} 
J_h \circ \widetilde{\eta} &= J \circ \pi_h^* \circ (\overline{\eta^{-1}})^* =  J \circ (\overline{\eta^{-1}})^* \circ \pi_h^*
\\ &= J \circ \widetilde{\eta} \circ \pi_h^* = (\text{Ad}^*_\eta J) \circ \pi_h^* = \text{Ad}^*_\eta (J \circ \pi_h^*) = \text{Ad}^*_\eta J_h,
\end{align*}
where the equality $(\text{Ad}^*_\eta J) \circ \pi_h^* = \text{Ad}^*_\eta (J \circ \pi_h^*)$ holds since the coadjoint action acts on $J$ after it is evaluated on its input, which is then an element of $\mathfrak{k}^*$. In particular, $(\text{Ad}_\eta^*J)(\varphi,\pi) := \text{Ad}_\eta^* ( J(\varphi,\pi) )$, so that 
$$ ((\text{Ad}^*_\eta J) \circ \pi_h^* )(\varphi,\pi) = (\text{Ad}^*_\eta J) (\pi_h^*(\varphi,\pi)) = \text{Ad}^*_\eta (J (\pi_h^*(\varphi,\pi))) =   \text{Ad}^*_\eta ((J \circ \pi_h^*)(\varphi,\pi))). $$
Stated another way, this follows from associativity of the composition of functions, viewing $\text{Ad}_\eta^*$ as a function $\mathfrak{k}^* \rightarrow \mathfrak{k}^*$.
\end{proof}
\end{corollary}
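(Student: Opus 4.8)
The plan is to reduce the statement to the identity $J_h = J\circ\pi_h^*$ established in Proposition~\ref{Momentum Map Prop}, together with the observation that the pullback $\pi_h^*$ intertwines the cotangent-lifted $K$-actions on $T^*\Lambda^k_h$ and $T^*H\Lambda^k$. Once that intertwining is available, the $\text{Ad}^*$-equivariance of $J_h$ falls out as a purely formal consequence of the $\text{Ad}^*$-equivariance of $J$, with no analytic input.

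\textbf{Step 1 (intertwining of $\pi_h^*$ with the cotangent actions).} Starting from the hypothesis that $\pi_h$ is $K$-equivariant on the configuration spaces, $\pi_h\circ\overline{\eta} = \overline{\eta}\circ\pi_h$, I take pullbacks; since pullback is contravariant, $(\pi_h\circ\overline{\eta})^* = \overline{\eta}^*\circ\pi_h^*$ and $(\overline{\eta}\circ\pi_h)^* = \pi_h^*\circ\overline{\eta}^*$, so $\overline{\eta}^*\circ\pi_h^* = \pi_h^*\circ\overline{\eta}^*$. Replacing $\eta$ by $\eta^{-1}$ gives $\widetilde{\eta}\circ\pi_h^* = \pi_h^*\circ\widetilde{\eta}$, where $\widetilde{\eta} = (\overline{\eta^{-1}})^*$. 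This is essentially the computation already carried out inside the proof of Proposition~\ref{Momentum Map Prop}, so I would simply invoke it.

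\textbf{Step 2 (diagram chase).} Using $J_h = J\circ\pi_h^*$, Step 1, and the hypothesis $J\circ\widetilde{\eta} = \text{Ad}^*_\eta\circ J$, I compute
\begin{align*}
J_h\circ\widetilde{\eta} &= (J\circ\pi_h^*)\circ\widetilde{\eta} = J\circ(\pi_h^*\circ\widetilde{\eta}) = J\circ(\widetilde{\eta}\circ\pi_h^*) \\
&= (J\circ\widetilde{\eta})\circ\pi_h^* = (\text{Ad}^*_\eta\circ J)\circ\pi_h^* = \text{Ad}^*_\eta\circ(J\circ\pi_h^*) = \text{Ad}^*_\eta\circ J_h .
\end{align*}
The only equality deserving a comment is $(\text{Ad}^*_\eta\circ J)\circ\pi_h^* = \text{Ad}^*_\eta\circ(J\circ\pi_h^*)$: since $\text{Ad}^*_\eta\colon\mathfrak{k}^*\to\mathfrak{k}^*$ acts on the value of $J$ only after $J$ has been evaluated on its argument, this is just associativity of composition, and writing it out pointwise on $(\varphi,\pi)\in T^*\Lambda^k_h$ dispels any apparent ambiguity.

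\textbf{Main obstacle.} There is no genuine obstacle; the result is formal naturality. The only places requiring care are the contravariance bookkeeping in Step~1 (getting the order of composition right when dualizing the equivariance of $\pi_h$) and remembering that $\text{Ad}^*_\eta$ operates on the codomain $\mathfrak{k}^*$. As with the preceding proposition, I would also remark that only infinitesimal equivariance of $\pi_h$ is needed, and that under the weakened equivariance $\pi_h\overline{\eta} = \overline{\psi_h(\eta)}\,\pi_h$ the same chain yields the twisted statement $J_h\circ\widetilde{\eta} = \text{Ad}^*_{\psi_h(\eta)}\circ J_h$.
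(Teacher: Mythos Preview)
Your proof is correct and follows essentially the same approach as the paper: both arguments use $J_h = J\circ\pi_h^*$, derive the intertwining $\pi_h^*\circ\widetilde{\eta} = \widetilde{\eta}\circ\pi_h^*$ from the $K$-equivariance of $\pi_h$, and then chase the diagram using the assumed $\text{Ad}^*$-equivariance of $J$ together with associativity of composition. Your Step~1 makes the contravariance bookkeeping more explicit than the paper does, and your closing remark on the weakened equivariance (yielding $J_h\circ\widetilde{\eta} = \text{Ad}^*_{\psi_h(\eta)}\circ J_h$) is a nice addition not stated in the paper's corollary itself, though consistent with the surrounding remarks.
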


\begin{remark}
Of course, since $K$ acts by cotangent lifts and hence by canonical symplectomorphisms, $J$ is an $\text{Ad}^*$-equivariant momentum map, and the corollary tells us that $J_h$ is as well. However, as we remark below, one may consider more general actions which admit momentum maps, and it is not necessarily the case that those momentum maps are $Ad^*$-equivariant. The result of the previous corollary still holds in this more general setting.
\end{remark}

The naturality of the momentum map structures from the previous proposition and corollary can be summarized via the following commuting diagram; for any $\eta \in K$,

\centerline{\xymatrix{
T^*H\Lambda^k \ar[dr]^{\widetilde{\eta}} \ar[dddrr]_{J} & & & & \ar[llll]^(0.483){\pi_h^*} T^*\Lambda^k_h \ar[ld]_{\widetilde{\eta}} \ar[llddd]^{J_h}
\\& T^*H\Lambda^k \ar[dr]_{J} & & \ar[ll]^{\pi_h^*} T^*\Lambda^k_h \ar[dl]^{J_h} &
\\ &  & \mathfrak{k}^* & &
\\ &  & \mathfrak{k}^* \ar[u]^(0.6){\text{Ad}^*_\eta} & &.
}}

\begin{remark} In the above proposition, we only assumed that $\pi_h$ was equivariant with respect to the $K$-action on the configuration space, and it follows that $\pi_h^*$ is equivariant with respect to the lifted action on the cotangent space. However, for more general actions on the cotangent space that do not arise from a cotangent lift, one must instead assume $\pi_h^*$ is equivariant with respect to this action. In this case, if the $K$-action on $T^*H\Lambda^k$ admits a momentum map $J$, then $J_h = \pi_h^{**}J$ is a momentum map for the action on $T^*\Lambda^k_h$. To verify this, let $(\varphi,\pi) \in T^*\Lambda^k_h$. We know that $d\langle J,\xi\rangle = i_{\xi_{T^*H\Lambda^k}}\omega$. Thus, 
$$d\langle J_h,\xi\rangle = \pi_h^{**}d\langle J,\xi\rangle = \pi_h^{**}(i_{\xi_{T^*H\Lambda^k}}\omega). $$
Then, observe that by equivariance, $\xi_{T^*H\Lambda^k}(\varphi,\pi) = T\pi^*_{h}\xi_{T^*\Lambda^k_h}(\varphi,\pi)$. Then, for any $X \in TT^*_{(\varphi,\pi)}\Lambda^k_h,$
$$ d\langle J_h,\xi\rangle (X) = (i_{\xi_{T^*H\Lambda^k}}\omega)(T\pi^*_{h}X) = \omega(T\pi^*_{h}\xi_{T^*\Lambda^k_h},T\pi^*_{h}X) = (\pi_h^{**}\omega)(\xi_{T^*\Lambda^k_h},X) = (i_{\xi_{T^*\Lambda^k_h}}\omega_h)(X), $$
where the above is evaluated at $(\varphi,\pi)$, which verifies that $J_h$ is a momentum map. For the subsequent discussion, we will assume that $K$ acts by cotangent lifts.
\end{remark}

We now define the energy-momentum map (\citet{GoIsMaMo2004}) and its semi-discrete counterpart. We consider vectors on $\Sigma_t$ with both tangent components in $T\Sigma_t$ and components transverse to the foliation, which in our adapted coordinates are in the span of $\partial/\partial t$. We extend the canonical form $\theta$ to act on vector fields on the extended phase space in the same way that we extended $\omega_h$ in our previous discussion of time-dependence. Let $\widetilde{\mathcal{L}}$ denote the Lagrangian density on the full spacetime, which is related to the instantaneous Lagrangian density by $\mathcal{L} = i_t^*\partial_t\lrcorner\tilde{\mathcal{L}}$. Define the map $\mathcal{J}$ from $I \times T^*H\Lambda^k$ to the dual of the space of vector fields on the extended phase space, via
\begin{equation}\label{Energy-Momentum Map vector fields}
\langle \mathfrak{J}(t,\varphi,\pi), V\rangle = (V\lrcorner\, \theta)(t,\varphi,\pi) - \int_{\Sigma}i_t^*V_t\lrcorner\, \widetilde{\mathcal{L}}(x^\mu,\varphi,\dot{\varphi},d\varphi),
\end{equation}
where we view $\dot{\varphi}$ as a function of $(\varphi,\pi)$, and where $V_t$ is the tangent-lift of the bundle projection $I \times T^*H\Lambda^k(\Sigma_t) \rightarrow I$ applied to $V$. 
\begin{prop}\label{Energy-Momentum Map Prop}
$\mathfrak{J}$ is the energy-momentum map, in the following sense:
\begin{enumerate}[label=(\roman*)]
\item \textbf{(Energy)} Let $\Phi^H_t$ denote the Hamiltonian flow of $H$, and $X_H$ be the associated generator on the extended phase space, then,
$$\langle \mathfrak{J}(t,\varphi,\pi), X_H\rangle =  H(t,\varphi,\pi).$$ 
\item \textbf{(Momentum)} If $V$ is tangent to the foliation, then,
$$\langle \mathfrak{J}(t,\varphi,\pi),V\rangle = (V\lrcorner\, \theta)(t,\varphi,\pi),$$
and in particular, if there is a $K$-action as in Proposition (\ref{Momentum Map Prop}) on the phase space over $\Sigma_t$, its momentum map $J$ is given by
$$ \langle J(t,\varphi,\pi),\xi\rangle = \langle \mathfrak{J}(t,\varphi,\pi),\xi_{T^*H\Lambda^k} \rangle,$$
such that, for each fixed $t$, $d\langle J(t,\varphi,\pi),\xi\rangle = \xi_{T^*H\Lambda^k}\lrcorner\, \omega(t,\varphi,\pi)$. 
\end{enumerate}
\begin{proof}
For the proof of (i), in local coordinates, we have that
$$ X_H = \frac{d}{dt}\Big|_{t=0} \Phi_t^H(t',\varphi,\pi) = \frac{\partial}{\partial t} + \dot{\varphi}^A \frac{\partial}{\partial\varphi^A} + \dot{\pi}_B\frac{\partial}{\partial\pi_B}, $$
and $(X_H)_t = \partial/\partial t$. Using expressions (\ref{canonical form in canonical formalism}) and (\ref{Energy-Momentum Map vector fields}), and the definition of the instantaneous Lagrangian density $\mathcal{L} = i_t^*\partial_t\lrcorner\, \widetilde{\mathcal{L}}$, we have that
\begin{align*}
\langle \mathfrak{J}(t,\varphi,\pi),X_H \rangle &= (X_H \lrcorner\, \theta)(t,\varphi,\pi) - \int_{\Sigma} i_t^*(X_H)_t\lrcorner\, \widetilde{\mathcal{L}}(x^\mu,\varphi,\dot{\varphi},d\varphi)
\\ &= \dot{\varphi}^A\frac{\partial}{\partial\varphi^A} \lrcorner\, \Big(\int_{\Sigma_t} \pi_Ad\varphi^A\otimes d^nx_0\Big) - \int_\Sigma i_t^*\frac{\partial}{\partial t}\lrcorner\, \widetilde{\mathcal{L}}(x^\mu,\varphi,\dot{\varphi},d\varphi) 
\\ &= \int_{\Sigma_t} \pi_A\dot{\varphi}^Ad^nx_0 - \int_\Sigma\mathcal{L}(t,x^i,\varphi,\dot{\varphi},d\varphi) 
\\ &= \langle \pi,\dot{\varphi} \rangle - L(t,\varphi,\dot{\varphi}) = H(t,\varphi,\pi). 
\end{align*} 

For the proof of (ii), note that for $V$ tangent to the foliation, $V_t=0$, which immediately gives the first equation of (ii). Setting the vector field to an infinitesimal generator of a $K$-action gives the momentum map
$$ \langle \mathfrak{J}(t,\varphi,\pi),\xi_{T^*H\Lambda^k} \rangle = (\xi_{T^*H\Lambda^k} \lrcorner\, \theta)(t,\varphi,\pi). $$
\end{proof}
\end{prop}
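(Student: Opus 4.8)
The plan is to verify the two assertions by direct substitution into the defining formula \eqref{Energy-Momentum Map vector fields} for $\mathfrak{J}$, using the coordinate expression \eqref{canonical form in canonical formalism} for the canonical one-form $\theta$ and the Legendre-transform definition $H = \langle \pi,\dot\varphi\rangle - L$ of the instantaneous Hamiltonian. The whole content is essentially that \eqref{Energy-Momentum Map vector fields} has been reverse-engineered so that pairing against $X_H$ reproduces the Legendre transform, while pairing against foliation-tangent generators reproduces the canonical momentum map.

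For part (i), I would first write the Hamiltonian generator on the extended phase space in adapted coordinates as $X_H = \partial/\partial t + \dot\varphi^A\,\partial/\partial\varphi^A + \dot\pi_B\,\partial/\partial\pi_B$, with $(\dot\varphi,\dot\pi)$ solving Hamilton's equations for $H$; the key point is that its temporal component is $(X_H)_t = \partial/\partial t$. Contracting $X_H$ with $\theta = \int_{\Sigma_t}\pi_A\,d\varphi^A\otimes d^nx_0$ isolates the $\partial/\partial\varphi^A$ piece and yields $\int_{\Sigma_t}\pi_A\dot\varphi^A\,d^nx_0 = \langle\pi,\dot\varphi\rangle$; crucially, since $\theta$ has been extended to $I\times T^*H\Lambda^k$ by acting trivially in the $t$-direction, the $\partial/\partial t$ component of $X_H$ contributes nothing. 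For the second term, I use the definition $\mathcal{L} = i_t^*\,\partial_t\lrcorner\,\widetilde{\mathcal{L}}$, so $\int_\Sigma i_t^*(X_H)_t\lrcorner\,\widetilde{\mathcal{L}} = \int_\Sigma\mathcal{L} = L(t,\varphi,\dot\varphi)$. Subtracting gives $\langle\mathfrak{J},X_H\rangle = \langle\pi,\dot\varphi\rangle - L = H$, with $\dot\varphi$ regarded as a function of $(\varphi,\pi)$.

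For part (ii), a vector field $V$ tangent to the foliation has $V_t = 0$, so the $\widetilde{\mathcal{L}}$-term in \eqref{Energy-Momentum Map vector fields} drops out and $\langle\mathfrak{J}(t,\varphi,\pi),V\rangle = (V\lrcorner\,\theta)(t,\varphi,\pi)$ at once. Specializing to $V = \xi_{T^*H\Lambda^k}$, the infinitesimal generator of a $K$-action as in Proposition~\ref{Momentum Map Prop} — which is automatically tangent to each slice $\Sigma_t$ since $K$ acts fibrewise on the phase space over $\Sigma_t$ — gives $\langle J,\xi\rangle = \xi_{T^*H\Lambda^k}\lrcorner\,\theta$, the standard formula for the canonical momentum map of a cotangent-lifted action. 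The identity $d\langle J,\xi\rangle = \xi_{T^*H\Lambda^k}\lrcorner\,\omega$ then follows from Cartan's formula together with invariance $\pounds_{\xi_{T^*H\Lambda^k}}\theta = 0$ of $\theta$ under cotangent lifts: $0 = \pounds_\xi\theta = d(\xi\lrcorner\,\theta) + \xi\lrcorner\,d\theta = d\langle J,\xi\rangle - \xi\lrcorner\,\omega$.

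I do not expect a genuine obstacle here; the computations are routine, and the only points requiring care are bookkeeping — recognizing that $\theta$ and $\omega$ are extended to the extended phase space $I\times T^*H\Lambda^k$ by acting trivially in $t$ (so contraction with $X_H$ picks up no term from its $\partial/\partial t$ component), correctly interpreting $V_t$ as the image of $V$ under the tangent-lifted bundle projection to $I$, and noting that the foliation-tangency of $\xi_{T^*H\Lambda^k}$ is exactly what makes the energy and momentum pieces decouple. If anything, the mild subtlety is purely definitional: checking that the $K$-action of Proposition~\ref{Momentum Map Prop} indeed produces generators with $V_t = 0$, which is immediate from its construction as a cotangent lift over each $\Sigma_t$.
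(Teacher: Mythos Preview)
Your proposal is correct and follows essentially the same approach as the paper: both compute part (i) by writing $X_H$ in adapted coordinates, identifying $(X_H)_t=\partial/\partial t$, contracting with $\theta$ to get $\langle\pi,\dot\varphi\rangle$, and recognizing the second term as $L$ via $\mathcal{L}=i_t^*\partial_t\lrcorner\widetilde{\mathcal{L}}$; and both dispatch part (ii) by noting $V_t=0$ kills the Lagrangian term. Your argument is slightly more complete in that you explicitly verify $d\langle J,\xi\rangle=\xi\lrcorner\,\omega$ via Cartan's formula and $\pounds_\xi\theta=0$, whereas the paper's proof simply stops after identifying $\langle J,\xi\rangle=\xi\lrcorner\,\theta$ and relies on the reader knowing this is the canonical momentum map for a cotangent-lifted action.
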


We now define the semi-discrete analogue of the energy-momentum map (\ref{Energy-Momentum Map vector fields}). Define the semi-discrete energy-momentum map $\mathcal{J}_h$ from $I \times T^*\Lambda^k_h$ to the dual of vector fields on the extended discrete phase space, via
\begin{equation}\label{Discrete Energy-Momentum Map vector fields}
\langle \mathfrak{J}_h(t,\varphi,\pi), V\rangle = (V\lrcorner\, \theta_h)(t,\varphi,\pi) - \int_{\Sigma}i_t^*V_{t,h}\lrcorner\, \widetilde{\mathcal{L}}_h(x^\mu,\varphi,\dot{\varphi},d\varphi),
\end{equation}
where $V_{t,h} = (T\pi_h^*V)_t$ and $\widetilde{\mathcal{L}}_h$ is the restriction of $\widetilde{\mathcal{L}}$ via precomposition with $\pi_h^*$. Of course, the analogous statement of the previous proposition holds for the semi-discrete energy-momentum map. Furthermore, $\mathcal{J}_h$ is the restriction of $\mathcal{J}$ in the following sense. 

\begin{prop}\label{Discrete EM Map as Restriction}
For $(t,\varphi,\pi)$ in the extended discrete phase space and $V$ a vector field over this space,
$$ \langle \mathfrak{J}_h(t,\varphi,\pi), V \rangle = \langle \mathfrak{J}(t,\pi_h^*(\varphi,\pi)), T\pi_h^* V \rangle. $$
\begin{proof}
This follows directly from the definitions,
\begin{align*}
\langle \mathfrak{J}(t,\pi_h^*(\varphi,\pi)), T\pi_h^* V \rangle &= (T\pi_h^*V \lrcorner\, \theta)(t,\pi_h^*(\varphi,\pi)) - \int_{\Sigma}i_t^{*}(T\pi_h^*V)_t \lrcorner\, \widetilde{\mathcal{L}}(t,\pi_h^*[(\varphi,\dot{\varphi},d\varphi)|_{(\varphi,\pi)}])
\\ &= (V \lrcorner\, \pi_h^{**}\theta)(t,\varphi,\pi) - \int_{\Sigma}i_t^*V_{t,h}\lrcorner\, \widetilde{\mathcal{L}}_h(t,\varphi,\dot{\varphi},d\varphi)
\\ &= (V\lrcorner\, \theta_h)(t,\varphi,\pi) - \int_{\Sigma}i_t^*V_{t,h}\lrcorner\, \widetilde{\mathcal{L}}_h(t,\varphi,\dot{\varphi},d\varphi)
 = \langle \mathfrak{J}_h(t,\varphi,\pi),V \rangle.
\end{align*}
\end{proof}
\end{prop}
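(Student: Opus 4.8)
The plan is to prove the identity by unwinding the two defining formulas side by side: formula~(\ref{Energy-Momentum Map vector fields}) for $\mathfrak{J}$ and formula~(\ref{Discrete Energy-Momentum Map vector fields}) for $\mathfrak{J}_h$. Each of $\langle\mathfrak{J}_h(t,\varphi,\pi),V\rangle$ and $\langle\mathfrak{J}(t,\pi_h^*(\varphi,\pi)),T\pi_h^*V\rangle$ is a sum of a canonical one-form term and a Lagrangian-density term, so it suffices to match these two pieces separately and then add.

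First I would treat the one-form term. Since $\theta_h$ was defined to be $\pi_h^{**}\theta$, naturality of the pullback of a $1$-form along $\pi_h^*\colon T^*\Lambda^k_h\to T^*H\Lambda^k$ gives, at the base point $(\varphi,\pi)$, that $(V\lrcorner\,\theta_h) = (V\lrcorner\,\pi_h^{**}\theta)$ equals $(T\pi_h^*V\lrcorner\,\theta)$ evaluated at $\pi_h^*(\varphi,\pi)$, which is exactly the one-form contribution to $\langle\mathfrak{J}(t,\pi_h^*(\varphi,\pi)),T\pi_h^*V\rangle$. For the density term, I would use that $V_{t,h}$ is \emph{defined} as $(T\pi_h^*V)_t$, so the vector contracted into the density agrees on the two sides, and that $\widetilde{\mathcal{L}}_h$ is \emph{defined} as the restriction of $\widetilde{\mathcal{L}}$ via precomposition with $\pi_h^*$, the evaluation point being the discrete jet data determined by $(\varphi,\pi)$, included into the full space. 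That the density term of $\mathfrak{J}_h$ agrees with that of $\mathfrak{J}$ at $\pi_h^*(\varphi,\pi)$ is thus a matter of definition, the relevant naturality being the same $\pi_h d = d\pi_h$ mechanism as in Proposition~\ref{Naturality}, which guarantees that feeding projected (hence holonomic) data into $\mathcal{L}$ is consistent. Assembling the two matched pieces back into (\ref{Energy-Momentum Map vector fields}) and (\ref{Discrete Energy-Momentum Map vector fields}) then closes the proof in a couple of lines.

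The step requiring the most attention—bookkeeping rather than a genuine obstacle—is pinning down what ``$\dot\varphi$ viewed as a function of $(\varphi,\pi)$'' means on the full-space side of the identity. One should interpret it via the discrete velocity read off from $(\varphi,\pi)\in T^*\Lambda^k_h$ (through the Legendre transform of $L_h$, equivalently through $H_h$ as constructed in Section~\ref{Semi-discrete Symplectic Structure Section}) and then included into $H\Lambda^k$, rather than by inverting the full Legendre transform at $\pi_h^*(\varphi,\pi)$; with this reading, which is precisely the one built into the definitions of $\widetilde{\mathcal{L}}_h$ and $V_{t,h}$, the required compatibility (e.g.\ the discrete Legendre relation being the restriction of the full one, cf.\ (\ref{Semi-Discrete Lagrangian Derivative Equation 2})) holds automatically, and the term-by-term matching goes through with no further hypotheses.
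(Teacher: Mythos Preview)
Your proposal is correct and follows essentially the same approach as the paper's proof: both arguments unwind the definitions of $\theta_h=\pi_h^{**}\theta$, $V_{t,h}=(T\pi_h^*V)_t$, and $\widetilde{\mathcal{L}}_h=\widetilde{\mathcal{L}}\circ\pi_h^*$ and match the canonical one-form term and the Lagrangian-density term, the only difference being that the paper does this in a single chain of equalities rather than isolating the two pieces. Your additional remark about how $\dot\varphi$ is to be read on the full-space side is more careful bookkeeping than the paper makes explicit, but it does not alter the argument.
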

The significance of this definition of the semi-discrete energy-momentum map is that it recovers the properties of Proposition \ref{Energy-Momentum Map Prop} in the semi-discrete setting. 
\begin{prop} 
\
\begin{enumerate}[label=(\roman*)]
\item \textbf{(Semi-discrete Energy)}
For $(t,\varphi,\pi)$ in the extended discrete phase space,
$$ \langle \mathfrak{J}_h(t,\varphi,\pi),X_{H_h} \rangle = H_h(t,\varphi,\pi). $$
\item \textbf{(Semi-discrete Momentum)}
If there is a $K$-action on the discrete phase space, then the momentum map $J_h$ is given by
$$ \langle J_h(t,\varphi,\pi),\xi\rangle = \langle \mathfrak{J}_h(t,\varphi,\pi),\xi_{T^*\Lambda^k_h}\rangle. $$
Furthermore, if the $K$-action on the discrete space arises from an action on the full space such that $\pi_h$ is $K$-equivariant, then for any $\xi \in \mathfrak{k}$, 
$$ \langle \mathfrak{J}_h(t,\varphi,\pi),\xi_{T^*\Lambda^k_h} \rangle = \langle \mathfrak{J}(t,\pi_h^*(\varphi,\pi)),\xi_{T^*H\Lambda^k}\rangle . $$
\end{enumerate}
\begin{proof}
The first two equations follow from analogous computations to the proof of Proposition \ref{Energy-Momentum Map Prop}. The last equation follows from the equivariance of $\pi_h$,
$$ T\pi_h^* \xi_{T^*\Lambda^k_h}(\varphi,\pi) = \xi_{T^*H\Lambda^k}(\pi_h^*(\varphi,\pi)),$$
and Proposition \ref{Discrete EM Map as Restriction}.
\end{proof}
\end{prop}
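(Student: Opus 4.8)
The plan is to mirror, step by step, the proof of Proposition~\ref{Energy-Momentum Map Prop}, with the discrete objects $\theta_h$, $\widetilde{\mathcal{L}}_h$, $H_h$ in place of their continuum counterparts, and then to read off the last identity from Proposition~\ref{Discrete EM Map as Restriction} together with the equivariance relation $T\pi_h^*\,\xi_{T^*\Lambda^k_h}(\varphi,\pi)=\xi_{T^*H\Lambda^k}(\pi_h^*(\varphi,\pi))$ established in the proof of Proposition~\ref{Momentum Map Prop}.

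For (i), work in the global coordinates $(\varphi^i,\pi_i)$ on $T^*\Lambda^k_h$ together with the time coordinate on $I$, so that $X_{H_h}=\partial/\partial t+\dot\varphi^i\,\partial/\partial\varphi^i+\dot\pi_i\,\partial/\partial\pi_i$ and its temporal component is $(X_{H_h})_t=\partial/\partial t$. Since $\pi_h^*$ is the identity on the $I$-factor of the extended phase space, $T\pi_h^*$ preserves temporal components and hence $(X_{H_h})_{t,h}=\partial/\partial t$ as well. Using the coordinate formula $\theta_h=M_i^{\ j}\pi_j\,d\varphi^i$ from~(\ref{Semi-Discrete One Form}) gives $(X_{H_h}\lrcorner\,\theta_h)(t,\varphi,\pi)=M_i^{\ j}\pi_j\dot\varphi^i=\langle\pi,\dot\varphi\rangle$, the duality pairing on $T^*\Lambda^k_h$, while $\int_{\Sigma}i_t^*\partial_t\lrcorner\,\widetilde{\mathcal{L}}_h(x^\mu,\varphi,\dot\varphi,d\varphi)=\int_{\Sigma}\mathcal{L}(x^\mu,\pi_h\varphi,\pi_h\dot\varphi,d\pi_h\varphi)=L_h(t,\varphi,\dot\varphi)$, using $\pi_h|_{\Lambda^k_h}=\mathrm{id}$ and the cochain property. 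Subtracting and comparing with the definition $H_h(t,\varphi,\pi)=\langle\pi,\dot\varphi\rangle-L_h(t,\varphi,\dot\varphi)$ of the discrete Legendre transform yields $\langle\mathfrak{J}_h(t,\varphi,\pi),X_{H_h}\rangle=H_h(t,\varphi,\pi)$.

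For (ii), note that the generator $\xi_{T^*\Lambda^k_h}$ of a $K$-action on the instantaneous phase space is tangent to the foliation, so it has vanishing temporal component; as $T\pi_h^*$ preserves temporal components, $(\xi_{T^*\Lambda^k_h})_{t,h}=0$ and the Lagrangian term in~(\ref{Discrete Energy-Momentum Map vector fields}) drops out, leaving $\langle\mathfrak{J}_h(t,\varphi,\pi),\xi_{T^*\Lambda^k_h}\rangle=(\xi_{T^*\Lambda^k_h}\lrcorner\,\theta_h)(t,\varphi,\pi)$. By the semi-discrete version of the momentum statement (and Proposition~\ref{Momentum Map Prop}), the right-hand side is precisely $\langle J_h(t,\varphi,\pi),\xi\rangle$ for the canonical momentum map of the cotangent-lifted $K$-action on $(T^*\Lambda^k_h,\omega_h)$, which gives the first equation. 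For the last equation, apply Proposition~\ref{Discrete EM Map as Restriction} with $V=\xi_{T^*\Lambda^k_h}$ to obtain $\langle\mathfrak{J}_h(t,\varphi,\pi),\xi_{T^*\Lambda^k_h}\rangle=\langle\mathfrak{J}(t,\pi_h^*(\varphi,\pi)),T\pi_h^*\xi_{T^*\Lambda^k_h}\rangle$ and then substitute $T\pi_h^*\xi_{T^*\Lambda^k_h}(\varphi,\pi)=\xi_{T^*H\Lambda^k}(\pi_h^*(\varphi,\pi))$.

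The coordinate bookkeeping is routine; the one point requiring care --- and what I regard as the main, if mild, obstacle --- is keeping track of the ``temporal component'' operation $V\mapsto V_t$ through the iterated pullback $T\pi_h^*$, i.e.\ checking that $\pi_h^*$ restricts to the identity on the base $I$ of the extended phase space so that $(T\pi_h^*V)_t$ agrees with $V_t$ there. This is exactly what makes the energy case ($V_t=\partial/\partial t$) and the momentum case ($V_t=0$) reduce correctly. A secondary point is the identification $\int_{\Sigma}i_t^*\partial_t\lrcorner\,\widetilde{\mathcal{L}}_h=L_h$ used in (i), which rests on the cochain-projection assumption and on $\pi_h$ acting as the identity on $\Lambda^k_h$.
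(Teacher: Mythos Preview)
Your proof is correct and follows essentially the same approach as the paper's own proof: the paper simply asserts that (i) and the first part of (ii) follow from ``analogous computations to Proposition~\ref{Energy-Momentum Map Prop}'' and that the last identity follows from the equivariance relation $T\pi_h^*\,\xi_{T^*\Lambda^k_h}(\varphi,\pi)=\xi_{T^*H\Lambda^k}(\pi_h^*(\varphi,\pi))$ together with Proposition~\ref{Discrete EM Map as Restriction}, which is exactly what you carry out. Your explicit discussion of why $T\pi_h^*$ preserves temporal components is a useful elaboration not made explicit in the paper.
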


The significance of a semi-discrete analogue of the energy-momentum map, aside from extending the semi-discrete momentum map structure, that was discussed in Proposition \ref{Momentum Map Prop}, is in determining semi-discrete analogues of Noether's second theorem, which we will pursue in subsequent work. 

\subsection{Temporal Discretization of the Semi-Discrete Theory}\label{Tensor Product Discretization Section}
To complete the discussion of the semi-discrete theory, we must of course discretize in time. We obtain a full discretization of the semi-discrete theory by discretizing the semi-discrete Euler--Lagrange equation (\ref{Semi-Discrete EL}) in time via a Galerkin Lagrangian variational integrator applied to the instantaneous semi-discrete Lagrangian (\ref{InstantaneousLagrangian1}), and show that this is equivalent to the full spacetime DEL (\ref{DEL 1a}) with tensor product elements. The associated finite element on the full spacetime is a tensor product mesh, obtained by discretizing the space $\Sigma$ and extending these elements in time by a partition of $I$. Of course, this is not the most general setup for a spacetime discretization, but often one wishes to discretize in time separately. For example, by choosing the appropriate temporal basis functions, the computation becomes local in time so that one can time march the solution from the initial data, instead of solving the entire DEL on the spacetime grid. Furthermore, there are constructions of cochain projections for tensor product elements (\citet{Ar2018}) so that with these finite element spaces, the naturality of the variational principle discussed in Section 2 carries over in the tensor product setting. 

Assume the same setup as in the discussion of the semi-discrete theory. Furthermore, assume that we have a finite element discretization of $H_0(I)$, the space of square integrable functions in time with square integrable derivative, which vanish on $\partial I$, with basis functions $\{w_\alpha \}$. Recall the instantaneous semi-discrete Lagrangian (\ref{InstantaneousLagrangian1}) is a function of the curves $\varphi^i(t),\dot{\varphi}^i(t)$ which are the coefficients of the expansions of $\varphi(t),\dot{\varphi}(t) \in \Lambda^k_h$ relative to the basis $\{v_i\}$ of $\Lambda^k_h$. Using the basis $\{w_\alpha\}$, we discretize these curves as 
\begin{align*}
\varphi^i(t) = (\varphi^i)^{\alpha} w_\alpha(t),
\end{align*}
where $\varphi(t) = (\varphi^i)^\alpha w_\alpha(t)v_i \in \Lambda^k_h$ in this notation. We consider the associated fully discrete action as a function of the coefficients,
$$ S[\{(\varphi^i)^{\alpha}\}] = \int_I dt L_h(t,\varphi^i(t), \dot{\varphi}^i(t)) = \int_I dt L_h(t,(\varphi^i)^{\alpha} w_\alpha, (\varphi^i)^{\alpha} \dot{w}_\alpha). $$
Enforcing the discrete variational principle in time gives the weak form of the Euler--Lagrange equations,
$$ 0 = \frac{\delta S}{\delta (\varphi^i)^\alpha} = \left(\frac{\partial L_h}{\partial \varphi^i}, w_\alpha\right)_{L^2(I)} + \left(\frac{\partial L_h}{\partial \dot{\varphi}^i},\dot{w}_\alpha\right)_{L^2(I)}. $$
Substituting equations (\ref{Semi-Discrete Lagrangian Derivative Equation 1}) and (\ref{Semi-Discrete Lagrangian Derivative Equation 2}) gives
\begin{align*}
0 &=  -((\partial_3\mathcal{L},v_i)_{L^2\Lambda^k(\Sigma)}, \dot{w}_\alpha)_{L^2(I)} - ((\partial_2\mathcal{L},v_i)_{L^2\Lambda^k(\Sigma)},w_\alpha)_{L^2(I)} - ((\partial_4\mathcal{L},dv_i)_{L^2\Lambda^{k+1}(\Sigma)}, w_\alpha)_{L^2(I)} \\ 
&= -(\partial_3\mathcal{L}, v_i \dot{w}_\alpha)_{L^2(I,L^2\Lambda^k(\Sigma))} - (\partial_2\mathcal{L},v_iw_\alpha)_{L^2(I,L^2\Lambda^k(\Sigma))} - (\partial_4\mathcal{L},(dv_i) w_\alpha)_{L^2(I,L^2\Lambda^{k+1}(\Sigma)) } \\
&= -\left(\frac{\partial \mathcal{L}}{\partial\dot{\varphi}}, v_i \dot{w}_\alpha\right)_{L^2(I,L^2\Lambda^k(\Sigma))} - \left(\frac{\partial \mathcal{L}}{\partial \varphi},v_iw_\alpha\right)_{L^2(I,L^2\Lambda^k(\Sigma))} - \left(\frac{\partial \mathcal{L}}{\partial (d\varphi)},(dv_i) w_\alpha\right)_{L^2(I,L^2\Lambda^{k+1}(\Sigma))}.
\end{align*}
Note that these equations can also be obtained directly from the semi-discrete Euler--Lagrange equations (\ref{Semi-Discrete EL}) by applying the Galerkin method in time with respect to the basis $\{w_\alpha\}$. Here, $d$ denotes the spatial exterior derivative on $\Sigma$. If $d_t$ denotes the temporal exterior derivative and we identity functions on $I$ with one-forms on $I$, we have $\dot{w}_\alpha \cong d_t w_\alpha$. If $d_T = d + d_t$ denotes the total exterior derivative on $\Sigma \times I$, then $d_T(v_i w_\alpha) = (dv_i)w_\alpha + v_i d_t w_\alpha$, where, as discussed in Remark \ref{Compare Covariant and Canonical Remark}, we are considering $k$-forms of the form $\sum_{J\in I^k_\Sigma} \phi_J(t,x)dx^J$. We now view the time-dependent $k$-form $\varphi: t \mapsto \varphi(t)$ as a $k$-form $\phi$ on spacetime, so the above can be written as
\begin{align*}
0 &= - \left(\frac{\partial \mathcal{L}}{\partial \phi},v_iw_\alpha\right)_{L^2\Lambda^k(\Sigma \times I)} - \left(\frac{\partial \mathcal{L}}{\partial (d\phi)},(dv_i) w_\alpha\right)_{L^2(I,L^2\Lambda^{k+1}(\Sigma))}  -\left(\frac{\partial \mathcal{L}}{\partial(d_t\phi)}, v_i d_tw_\alpha\right)_{L^2\Lambda^1(I,L^2\Lambda^{k}(\Sigma))} \\
&= - \left(\frac{\partial \mathcal{L}}{\partial \phi},v_iw_\alpha\right)_{L^2(\Sigma \times I)} - \left(\frac{\partial \mathcal{L}}{\partial (d_T\phi)},d_T(v_iw_\alpha)\right)_{L^2\Lambda^{k+1}(\Sigma \times I) },
\end{align*}
which is the DEL (\ref{DEL 1a}) with tensor product basis $\{v_iw_\alpha\}$. 

Note that this result can also be obtained from the semi-discrete Hamiltonian setting, assuming that $L_h$ is hyperregular, using the fact that the semi-discrete Hamiltonian and semi-discrete Lagrangian formulations are equivalent by Proposition \ref{Equivalence of Semi-Discrete Formulations Prop}, and the fact that Galerkin Lagrangian variational integrators and Galerkin Hamiltonian variational integrators are equivalent in the hyperregular case, as established in \citet{LeZh2009}.

\section{Conclusion and Future Directions}
In this paper, we showed how discretizing the variational principle for Lagrangian field theories using finite element cochain projections naturally gives rise to a discrete variational structure which is analogous to the continuum variational structure. Namely, the discrete variational structure is encoded by the discrete Cartan form. Our discrete Cartan form generalizes the discrete Cartan form introduced by \citet{MaPaSh1998} to more general finite element spaces within the finite element exterior calculus framework. Using the discrete Cartan form, we expressed a discrete multisymplectic form formula and a discrete Noether theorem in direct analogy to their continuum counterparts. Furthermore, we studied semi-discretization of Lagrangian PDEs by spatial cochain projections, showing that such semi-discretization gives rise to semi-discrete symplectic, Hamiltonian, and energy-momentum map structures. Finally, we related the methods obtained by covariant discretization and canonical semi-discretization in the case of tensor product finite elements. 

In the paper, we outlined several possible research directions, including studying particular field theories and showing rigorous convergence of the discrete Cartan form, constructing group-equivariant cochain projections, and establishing a discrete Noether's second theorem utilizing the semi-discrete energy-momentum map. Another natural research direction would be to extend the discrete variational structures presented here to the discontinuous Galerkin setting and compare them with the results obtained in the multisymplectic Hamiltonian setting by  \citet{McAr2020}. In particular, we expect that in this setting, the discrete Cartan form would only involve integration over $\partial U$, since boundary variations can be localized to codimension-one simplices, unlike for conforming finite element spaces. Furthermore, we aim to investigate how the discrete variational structures presented in this paper in the conforming setting, and extended to the discontinuous Galerkin setting, can be used to provide a geometric variational framework for studying lattice field theories, building on the discrete variational framework for lattice field theories initiated in \citet{ArZa2014}. 

\section*{Acknowledgements}
BT was supported by the NSF Graduate Research Fellowship DGE-2038238, and by NSF under grants DMS-1411792, DMS-1813635. ML was supported by NSF under grants DMS-1411792, DMS-1345013, DMS-1813635, by AFOSR under grant FA9550-18-1-0288, and by the DoD under grant 13106725 (Newton Award for Transformative Ideas during the COVID-19 Pandemic).

\nocite{*}

\bibliographystyle{plainnat}
\bibliography{fieldtheory.bib}

%
%
%
%

\end{document}